\documentclass[reqno]{amsart}
\usepackage[latin1]{inputenc}
\usepackage[pdftex]{graphicx}
\usepackage{graphics}
\usepackage{graphicx}
\usepackage{epstopdf}
\usepackage[pdftex,hyperindex]{hyperref}
\usepackage{floatflt}
\usepackage{multicol}
\usepackage[T1]{fontenc}
\usepackage{textcomp}
\usepackage{latexsym}
\usepackage{expdlist}
\usepackage{vmargin}
\usepackage{booktabs}
\usepackage{placeins}
\usepackage{array}
\usepackage{amsmath}
\usepackage{amssymb}
\usepackage{amsfonts}
\usepackage{verbatim}
\usepackage{array}
\usepackage{framed}
\usepackage{amsthm}
\usepackage{enumerate}
\usepackage{cite}
\pdfcompresslevel=9
\input amssym.def
\input amssym
\textwidth 14cm
\textheight 21cm
\oddsidemargin 4cm
\evensidemargin 4cm
\topskip 0cm
\headheight 0cm
\topmargin 2.5cm
\setlength{\voffset}{-40pt}
\sloppy

\theoremstyle{plain}
\newtheorem{theorem}{Theorem}[section]
\newtheorem{lemma}{Lemma}[section]
\newtheorem{corollary}{Corollary}[section]
\newtheorem{proposition}{Proposition}[section]
\theoremstyle{remark}
\newtheorem{remark}{Remark}[section]

\theoremstyle{definition}

\allowdisplaybreaks[1]

\begin{document}

\title[Associated symmetric Pollaczek polynomials]{Tur\'{a}n's inequality, nonnegative linearization and amenability properties for associated symmetric Pollaczek polynomials}

\author{Stefan Kahler}

\address{Lehrstuhl A f\"{u}r Mathematik, RWTH Aachen University, 52056 Aachen, Germany}

\email{stefan.kahler@mathA.rwth-aachen.de}

\date{\today}

\begin{abstract}
An elegant and fruitful way to bring harmonic analysis into the theory of orthogonal polynomials and special functions, or to associate certain Banach algebras with orthogonal polynomials satisfying a specific but frequently satisfied nonnegative linearization property, is the concept of a polynomial hypergroup. Polynomial hypergroups (or the underlying polynomials, respectively) are accompanied by $L^1$-algebras and a rich, well-developed and unified harmonic analysis. However, the individual behavior strongly depends on the underlying polynomials. We study the associated symmetric Pollaczek polynomials, which are a two-parameter generalization of the ultraspherical polynomials. Considering the associated $L^1$-algebras, we will provide complete characterizations of weak amenability and point amenability by specifying the corresponding parameter regions. In particular, we shall see that there is a large parameter region for which none of these amenability properties holds (which is very different to $L^1$-algebras of locally compact groups). Moreover, we will rule out right character amenability. The crucial underlying nonnegative linearization property will be established, too, which particularly establishes a conjecture of R. Lasser (1994). Furthermore, we shall prove Tur\'{a}n's inequality for associated symmetric Pollaczek polynomials. Our strategy relies on chain sequences, asymptotic behavior, further Tur\'{a}n type inequalities and transformations into more convenient orthogonal polynomial systems.
\end{abstract}

\keywords{Orthogonal polynomials, nonnegative linearization, Tur\'{a}n's inequality, weak amenability, point derivations, associated symmetric Pollaczek polynomials}

\subjclass[2010]{Primary 33C47; Secondary 26D05, 43A20, 43A62}

\maketitle

\numberwithin{equation}{section}

\section{Introduction}\label{sec:intro}

\subsection{Motivation}

A classical and well-known result of Tur\'{a}n states that if $(\Lambda_n(x))_{n\in\mathbb{N}_0}$ is the sequence of Legendre polynomials, then
\begin{equation*}
(\Lambda_n(x))^2-\Lambda_{n+1}(x)\Lambda_{n-1}(x)\geq0
\end{equation*}
for all $n\in\mathbb{N}$ and $x\in[-1,1]$, and equality holds if and only if $x=\pm1$ \cite{Sz48,Tu50} (see \cite{BS09,Sz98} for a historic overview). A remarkable and much more nontrivial result of Gasper characterizes the parameters $\alpha,\beta>-1$ for which the Jacobi polynomials $(R_n^{(\alpha,\beta)}(x))_{n\in\mathbb{N}_0}$, which are orthogonal w.r.t. the measure $(1-x)^{\alpha}(1+x)^{\beta}\chi_{(-1,1)}(x)\,\mathrm{d}x$ and normalized by $R_n^{(\alpha,\beta)}(1)\equiv1$, satisfy an analogous inequality \cite{Ga72}. The special case of ultraspherical polynomials, which corresponds to $\alpha=\beta$ and also contains the originally considered Legendre polynomials ($\alpha=\beta=0$), can be tackled easier \cite{Sk54,Sz48,Sz51}. There are remarkable and more recent refinements of Tur\'{a}n's inequality for Legendre \cite{AGKL07} and ultraspherical \cite{NP15} polynomials (partially based on computer methods). General criteria in terms of the three-term recurrence relations of orthogonal polynomials were obtained in \cite{BS09,Kr11,Sz98}.\\

In this paper, we consider the class of associated symmetric Pollaczek polynomials, which is another generalization of the ultraspherical polynomials. In contrast to the Jacobi polynomials, the corresponding orthogonalization measures are still symmetric but depend on \textit{two} additional parameters $\lambda$ and $\nu$ of rather different origin and nature. The corresponding polynomials are a three-parameter generalization of the Legendre polynomials, and explicit computations are often not possible (except for special choices of the parameters). For instance, products of the form $R_m^{(\alpha,\beta)}(x)R_n^{(\alpha,\beta)}(x)$ can be expanded w.r.t. the basis $\{R_k^{(\alpha,\beta)}(x):k\in\mathbb{N}_0\}$ via ${}_9F_8$ hypergeometric series \cite{Ra81a}, and in another result Gasper characterizes all $\alpha,\beta>-1$ for which the arising linearization coefficients are nonnegative (which is called `property (P)' in the literature), using explicit recurrence relations \cite{Ga70a,Ga70b} or explicit linearization formulas for related classes \cite{Ga83}. We are not aware of analogous formulas for the (whole) class of associated symmetric Pollaczek polynomials, however.\\

One main objective of this paper is to establish both Tur\'{a}n's inequality and property (P) for associated symmetric Pollaczek polynomials. Inequalities for associated Pollaczek polynomials are a nontrivial and topical field; see \cite{LR20} for a very recent publication. Our proof of property (P) particularly shows a conjecture of Lasser \cite{La94} (in fact, we shall obtain a stronger result). Property (P) gives rise to a deep and rich harmonic analysis and, in particular, to a certain $L^1$-algebra \cite{La83}. Another main objective of the paper is to study amenability properties of this Banach algebra (depending on the three parameters $\alpha$, $\lambda$ and $\nu$), which will extend earlier results obtained in \cite{Ka15}.\footnote{Results which are cited from \cite{Ka15} can also be found in \cite{Ka16b}.} In doing so, we will also deal with the (not explicitly available) derivatives of the polynomials.

\subsection{Underlying setting and basics about polynomial hypergroups}

Let us consider the basic underlying setting in some more detail. Let $(P_n(x))_{n\in\mathbb{N}_0}\subseteq\mathbb{R}[x]$ with $\mathrm{deg}\;P_n(x)=n$ be a sequence of polynomials which satisfies a recurrence relation of the type $P_0(x)=1$, $P_1(x)=x$ and
\begin{equation*}
x P_n(x)=a_n P_{n+1}(x)+c_n P_{n-1}(x)\;(n\in\mathbb{N}),
\end{equation*}
where $(a_n)_{n\in\mathbb{N}}\subseteq(0,1)$ and $(c_n)_{n\in\mathbb{N}}\subseteq(0,1)$ fulfill $a_n+c_n=1\;(n\in\mathbb{N})$. Moreover, let $a_0:=1$ and $c_0:=0$ (so the recurrence is also satisfied for $n=0$). The sequence $(\sigma_n(x))_{n\in\mathbb{N}_0}\subseteq\mathbb{R}[x]$ of monic polynomials that corresponds to $(P_n(x))_{n\in\mathbb{N}_0}$ clearly satisfies the recurrence relation $\sigma_0(x)=1$, $\sigma_1(x)=x$,
\begin{equation*}
x\sigma_n(x)=\sigma_{n+1}(x)+c_n a_{n-1}\sigma_{n-1}(x)\;(n\in\mathbb{N}).
\end{equation*}
Let $(P_n(x))_{n\in\mathbb{N}_0}$ fulfill `property (P)', i.e., all linearization coefficients $g(m,n;k)$ given by the expansions
\begin{equation}\label{eq:productlinear}
P_m(x)P_n(x)=\sum_{k=0}^{m+n}\underbrace{g(m,n;k)}_{\overset{!}{\geq}0\;\mbox{(P)}}P_k(x)\;(m,n\in\mathbb{N}_0)
\end{equation}
are nonnegative. As a consequence of Favard's theorem and well-known uniqueness results from the theory of orthogonal polynomials \cite{Ch78}, $(P_n(x))_{n\in\mathbb{N}_0}$ is orthogonal w.r.t. a unique symmetric probability (Borel) measure $\mu$ on $\mathbb{R}$ with $|\mathrm{supp}\;\mu|=\infty$, i.e.,
\begin{equation*}
\int_\mathbb{R}\!P_m(x)P_n(x)\,\mathrm{d}\mu(x)\neq0\Leftrightarrow m=n.
\end{equation*}
It is also well-known that the zeros of the polynomials are real and located in the interior of the convex hull of $\mathrm{supp}\;\mu$ \cite{Ch78}; moreover, one obviously has the normalization $P_n(1)\equiv1$. Due to orthogonality, one has $g(m,n;|m-n|),g(m,n;m+n)\neq0$ and $g(m,n;k)=0$ for $k<|m-n|$ \cite{La05} (so the summation in \eqref{eq:productlinear} starts with $k=|m-n|$, in fact). Due to symmetry, one also has $g(m,n;k)=0$ if $m+n-k$ is odd. Another obvious consequence is that $\sum_{k=|m-n|}^{m+n}g(m,n;k)=1$. Defining a convolution which maps $\mathbb{N}_0\times\mathbb{N}_0$ into the convex hull of the Dirac functions on $\mathbb{N}_0$ via
\begin{equation}\label{eq:convfirst}
(m,n)\mapsto\sum_{k=|m-n|}^{m+n}g(m,n;k)\delta_k,
\end{equation}
and defining an involution $\mathbb{N}_0\rightarrow\mathbb{N}_0$ by the identity, one obtains a commutative discrete hypergroup with unit element $0$ on the nonnegative integers (induced by $(P_n(x))_{n\in\mathbb{N}_0}$).\footnote{As usual, $\delta$ with a subscript means the corresponding Dirac function.} Such hypergroups were introduced by Lasser \cite{La83} and are called polynomial hypergroups on $\mathbb{N}_0$. They are generally very different from groups or semigroups, and the individual behavior strongly depends on the underlying sequence $(P_n(x))_{n\in\mathbb{N}_0}$ (there is an abundance of examples). Nevertheless, many concepts of harmonic analysis take a rather unified and concrete form, which makes these objects located at a fruitful crossing point between the theory of orthogonal polynomials and special functions, on the one hand, and functional and harmonic analysis and the theory of Banach algebras, on the other hand.\\

To make the paper more self-contained (and since the reader is not expected to have preknowledge on hypergroups), we briefly recall some basics and, if not stated otherwise, refer to \cite{La83,La05} in the following. We previously mention that the hypergroup structure appears only ,,in the background''; in fact, we are mainly interested in the polynomials themselves and in the corresponding $L^1$-algebras (which can be directly described in terms of the linearization coefficients $g(m,n;k)$; see below).\\

Roughly speaking, the concept of a hypergroup generalizes (locally compact) groups by allowing the convolution of two Dirac measures to be not necessarily a Dirac measure again but a more general probability measure satisfying certain compatibility and non-degeneracy properties; the full axioms can be found in standard literature like \cite{BH95}, and we particularly refer to \cite{La05} for the simplified axioms concerning discrete hypergroups. In the following, let property (P) be satisfied.
\begin{itemize}
\item For any function $f:\mathbb{N}_0\rightarrow\mathbb{C}$ and any $n\in\mathbb{N}_0$, the translation $T_n f:\mathbb{N}_0\rightarrow\mathbb{C}$ of $f$ by $n$ is given by
\begin{equation*}
T_n f(m)=\sum_{k=|m-n|}^{m+n}g(m,n;k)f(k).
\end{equation*}
There is a Haar measure w.r.t. such translations (cf. equation \eqref{eq:haarbackground} below): normalized such that $\{0\}$ is mapped to $1$, it is just the counting measure on $\mathbb{N}_0$ weighted by the Haar weights, i.e., the values of the Haar function $h:\mathbb{N}_0\rightarrow[1,\infty)$,
\begin{equation*}
h(n):=\frac{1}{g(n,n;0)}=\frac{1}{\int_\mathbb{R}\!P_n^2(x)\,\mathrm{d}\mu(x)}.
\end{equation*}
Equivalently, $h$ is recursively given by
\begin{equation*}
h(0)=1,\;h(n+1)=\frac{a_n}{c_{n+1}}h(n)\;(n\in\mathbb{N}_0).
\end{equation*}
For every $p\in[1,\infty)$, let $\ell^p(h):=\{f:\mathbb{N}_0\rightarrow\mathbb{C}:\left\|f\right\|_p<\infty\}$ with $\left\|f\right\|_p:=\left(\sum_{k=0}^\infty|f(k)|^p h(k)\right)^{1/p}$. Moreover, let $\ell^\infty(h):=\ell^\infty$. Then the translation of an element of $\ell^p(h)$ is an element of $\ell^p(h)$ again (for every $p\in[1,\infty]$). One has
\begin{equation}\label{eq:haarbackground}
\sum_{k=0}^{\infty}T_n f(k)h(k)=\sum_{k=0}^{\infty}f(k)h(k)
\end{equation}
for every $f\in\ell^1(h)$ and $n\in\mathbb{N}_0$.
\item For $f\in\ell^p(h)$ and $g\in\ell^q(h)$, where $p\in[1,\infty]$ and $q:=p/(p-1)\in[1,\infty]$, the convolution $f\ast g:\mathbb{N}_0\rightarrow\mathbb{C}$ is defined by
\begin{equation}\label{eq:convsecond}
f\ast g(n):=\sum_{k=0}^\infty T_n f(k)g(k)h(k),
\end{equation}
and one has $f\ast g=g\ast f\in\ell^\infty$ \cite{Ka15,La05}. Moreover, if $f\in\ell^1(h)$, then $f\ast g\in\ell^q(h)$ with $\left\|f\ast g\right\|_q\leq\left\|f\right\|_1\left\|g\right\|_q$. Together with this convolution \eqref{eq:convsecond} (which is an extension of the hypergroup convolution \eqref{eq:convfirst}), complex conjugation and the norm $\left\|.\right\|_1$, the set $\ell^1(h)$ becomes a semisimple commutative Banach $\ast$-algebra with unit $\delta_0$ (the `$L^1$-algebra'), and $\ell^\infty$ is the dual module of $\ell^1(h)$ (acting via convolution) \cite{La07,La09b}.
\item Let
\begin{equation*}
\mathcal{X}^b(\mathbb{N}_0):=\left\{z\in\mathbb{C}:\sup_{n\in\mathbb{N}_0}|P_n(z)|<\infty\right\}=\left\{z\in\mathbb{C}:\max_{n\in\mathbb{N}_0}|P_n(z)|=1\right\}
\end{equation*}
(the latter equality is not obvious but always valid), and let $\widehat{\mathbb{N}_0}:=\mathcal{X}^b(\mathbb{N}_0)\cap\mathbb{R}$. $\mathcal{X}^b(\mathbb{N}_0)$ can be identified with the structure space $\Delta(\ell^1(h))$ via the homeomorphism $\mathcal{X}^b(\mathbb{N}_0)\rightarrow\Delta(\ell^1(h))$, $z\mapsto\varphi_z$, $\varphi_z(f):=\sum_{k=0}^\infty f(k)\overline{P_k(z)}h(k)\;(f\in\ell^1(h))$. In the same way, $\widehat{\mathbb{N}_0}$ can be identified with the Hermitian structure space $\Delta_s(\ell^1(h))$. In particular, Gelfand's theory yields that $\mathcal{X}^b(\mathbb{N}_0)$ and $\widehat{\mathbb{N}_0}$ are compact. Furthermore,
\begin{equation*}
\{1\}\cup\mathrm{supp}\;\mu\subseteq\widehat{\mathbb{N}_0}\subseteq[-1,1].
\end{equation*}
Given some $z\in\mathcal{X}^b(\mathbb{N}_0)$, the character $\alpha_z\in\ell^\infty\backslash\{0\}$ belonging to $z$ is given by
\begin{equation*}
\alpha_z(n):=P_n(z)\;(n\in\mathbb{N}_0).
\end{equation*}
One has
\begin{equation*}
T_m\alpha_z(n)=\alpha_z(m)\alpha_z(n)\;(m,n\in\mathbb{N}_0)
\end{equation*}
and, obviously,
\begin{equation*}
|\alpha_z(n)|\leq1\;(n\in\mathbb{N}_0).
\end{equation*}
If $x\in\widehat{\mathbb{N}_0}$, then $\alpha_x\in\ell^\infty\backslash\{0\}$ is called a symmetric character.
\item Finally, for $f\in\ell^1(h)$, the Fourier transform $\widehat{f}:\widehat{\mathbb{N}_0}\rightarrow\mathbb{C}$ reads
\begin{equation*}
\widehat{f}(x)=\sum_{k=0}^\infty f(k)P_k(x)h(k).
\end{equation*}
$\widehat{f}$ is continuous, and one has $\left\|\widehat{f}\right\|_\infty\leq\left\|f\right\|_1$ and $\widehat{f\ast g}=\widehat{f}\;\widehat{g}\;(f,g\in\ell^1(h))$. The Plancherel--Levitan theorem states that
\begin{equation*}
\left\|f\right\|_2^2=\left\|\widehat{f}\right\|_2^2=\int_\mathbb{R}\!(\widehat{f}(x))^2\,\mathrm{d}\mu(x)\;(f\in\ell^1(h)).
\end{equation*}
The Fourier transformation $(\ell^1(h),\left\|.\right\|_1)\rightarrow(\mathcal{C}(\widehat{\mathbb{N}_0}),\left\|.\right\|_{\infty})$, $f\mapsto\widehat{f}$ is injective and continuous. There is exactly one isometric isomorphism $\mathcal{P}:(\ell^2(h),\left\|.\right\|_2)\rightarrow(L^2(\mathbb{R},\mu),\left\|.\right\|_2)$, called the Plancherel isomorphism, such that $\widehat{f}=\mathcal{P}(f)$ for all $f\in\ell^1(h)$ (equality as elements of $L^2(\mathbb{R},\mu)$). The orthogonalization measure $\mu$ serves as `Plancherel measure'; one has
\begin{equation*}
\mathcal{P}^{-1}(F)(k)=\int_\mathbb{R}\!F(x)P_k(x)\,\mathrm{d}\mu(x)\;(F\in L^2(\mathbb{R},\mu),k\in\mathbb{N}_0).
\end{equation*}
\end{itemize}
The nonnegative linearization property (P) is crucial for the harmonic analysis described above.

\subsection{Associated symmetric Pollaczek polynomials and outline of the paper}

We are interested in the following class: let $\alpha>-1/2$, $\lambda\geq0$ and $\nu\geq0$. The sequence $(P_n(x))_{n\in\mathbb{N}_0}=:(Q_n^{(\alpha,\lambda,\nu)}(x))_{n\in\mathbb{N}_0}$ of associated symmetric Pollaczek polynomials which corresponds to $\alpha$, $\lambda$ and $\nu$ is given by
\begin{equation}\label{eq:cassocpoll}
c_n=\frac{n+\nu+2\alpha}{2n+2\nu+2\alpha+2\lambda+1}\frac{L_{n-1}^{(2\alpha,\nu)}(-2\lambda)}{L_n^{(2\alpha,\nu)}(-2\lambda)}\in(0,1)\;(n\in\mathbb{N})
\end{equation}
and
\begin{equation*}
a_n\equiv1-c_n,
\end{equation*}
where $(L_n^{(2\alpha,\nu)}(x))_{n\in\mathbb{N}_0}$ denotes the associated Laguerre polynomials that correspond to $2\alpha$ and $\nu$. The latter are given by the recurrence relation $L_0^{(2\alpha,\nu)}(x)=1$, $L_1^{(2\alpha,\nu)}(x)=(-x+2\nu+2\alpha+1)/(\nu+1)$,
\begin{equation}\label{eq:Laguerrerec}
(n+\nu+1)L_{n+1}^{(2\alpha,\nu)}(x)=(-x+2n+2\nu+2\alpha+1)L_n^{(2\alpha,\nu)}(x)-(n+\nu+2\alpha)L_{n-1}^{(2\alpha,\nu)}(x)\;(n\in\mathbb{N}).
\end{equation}
Concerning well-definedness in \eqref{eq:cassocpoll}, we note that the associated Laguerre polynomials $(L_n^{(2\alpha,\nu)}(x))_{n\in\mathbb{N}_0}$ are positive on $(-\infty,0]$ \cite{La94}. $(c_n)_{n\in\mathbb{N}}$ is also given via the recurrence relation
\begin{equation}\label{eq:assocpollrec}
c_n=\frac{\frac{(n+\nu)(n+\nu+2\alpha)}{(2n+2\nu+2\alpha+2\lambda+1)(2n+2\nu+2\alpha+2\lambda-1)}}{1-c_{n-1}}\;(n\in\mathbb{N}).
\end{equation}
These basics can be found in \cite{Ch78,La94}; an explicit formula for the orthogonalization measure was found in \cite{Po50} and reads $\mathrm{d}\mu(x)=\mu^{\prime}(x)\,\mathrm{d}x$ with
\begin{equation}\label{eq:muassocpoll}
\mu^{\prime}(x)=\begin{cases} C_{\alpha,\lambda,\nu}\frac{(1-x^2)^\alpha e^{\frac{\lambda x(2\arccos x-\pi)}{\sqrt{1-x^2}}}\left|\Gamma\left(\alpha+\nu+\frac{1}{2}+\frac{i\lambda x}{\sqrt{1-x^2}}\right)\right|^2}{\left|{}_2F_1\left(\left.\begin{matrix}\frac{1}{2}-\alpha+\frac{i\lambda x}{\sqrt{1-x^2}},\nu \\ \alpha+\nu+\frac{1}{2}+\frac{i\lambda x}{\sqrt{1-x^2}}\end{matrix}\right|2x^2-1+2i x\sqrt{1-x^2}\right)\right|^2}, & x\in(-1,1), \\ 0, & \mbox{else}, \end{cases}
\end{equation}
where $C_{\alpha,\lambda,\nu}>0$ is a constant such that $\mu$ has total mass $1$. There are also hypergeometric representations of the associated Pollaczek polynomials \cite{LW19,Wi90}.\\

In the theory of orthogonal polynomials and asymptotics, this class is of special interest because it is well-known that for certain choices of the parameters Szeg{\H{o}}'s condition is not satisfied (i.e., one does not have $\int_{(-1,1)}\!\log(\mu^{\prime}(x))/\sqrt{1-x^2}\,\mathrm{d}\lambda(x)>-\infty$ for certain choices of the parameters, which has consequences on asymptotic behavior) \cite{Sz75}. For $\lambda=0$, one gets the associated ultraspherical polynomials. For $\nu=0$, one gets the symmetric Pollaczek polynomials. Finally, for $\lambda=\nu=0$ one gets the ultraspherical polynomials, including the Legendre polynomials ($\alpha=0$) and the Chebyshev polynomials of the first ($\alpha=-1/2$) and second ($\alpha=1/2$) kind.\\

In \cite{La94}, property (P) was established for the case $\alpha\geq0$, for the case $\lambda=0$ and for the case $\nu=0\wedge\lambda<\alpha+1/2$; some partial results concerning property (P) were also obtained in \cite{La83}. Moreover, in \cite{La94} Lasser conjectured that property (P) is satisfied whenever $\lambda<\alpha+1/2$. In Section~\ref{sec:Turan}, we give a stronger result than the conjectured one and show that nonnegative linearization is \textit{always} satisfied for the class of associated symmetric Pollaczek polynomials (Theorem~\ref{thm:assocpollnonneg}). Moreover, we prove that Tur\'{a}n's inequality is always satisfied: one has
\begin{align*}
(Q_n^{(\alpha,\lambda,\nu)}(x))^2-Q_{n+1}^{(\alpha,\lambda,\nu)}(x)Q_{n-1}^{(\alpha,\lambda,\nu)}(x)&\geq0\;(n\in\mathbb{N},x\in[-1,1]),\\
(Q_n^{(\alpha,\lambda,\nu)}(x))^2-Q_{n+1}^{(\alpha,\lambda,\nu)}(x)Q_{n-1}^{(\alpha,\lambda,\nu)}(x)&>0\;(n\in\mathbb{N},x\in(-1,1)).
\end{align*}
Concerning the class of symmetric Pollaczek polynomials, i.e., concerning the special case $\nu=0$, Tur\'{a}n's inequality was already tackled in \cite[Theorem 3]{Sz98}. However, the proof of \cite[Theorem 3]{Sz98} contains an error which leaves open the case $\lambda<\alpha+1/2$.\footnote{The critical point in the proof given in \cite{Sz98} is the application of \cite[Corollary 1]{Sz98}. If $\lambda<\alpha+1/2$, then the conditions of this corollary are not satisfied (more precisely, both condition (i) and condition (ii) of the corollary are violated).} With a normalization other than at $x=1$, the special case $\nu=0$ was also studied in \cite{BI97}.\\

One of the most important topics in harmonic analysis is amenability and its various generalizations, historically motivated by the Banach--Tarski paradox and paradoxical decompositions \cite{Wa93}. In the context of (polynomial or different) hypergroups, recently studied amenability properties are generalizations of Reiter's condition \cite{Ku20} and F{\o}lner type conditions \cite{Al17}, for instance. In Section~\ref{sec:amenability}, we continue our research started in \cite{Ka15} and study amenability properties which correspond to the associated symmetric Pollaczek polynomials. In particular, we are interested in weak amenability of $\ell^1(h)$ (nonexistence of nonzero bounded derivations from $\ell^1(h)$ into $\ell^{\infty}$) and point amenability of $\ell^1(h)$ (nonexistence of nonzero bounded point derivations). The latter means that there is no $x\in\widehat{\mathbb{N}_0}$ which admits a nonzero bounded linear functional $D:\ell^1(h)\rightarrow\mathbb{C}$ satisfying
\begin{equation*}
D(f\ast g)=\widehat{f}(x)D(g)+\widehat{g}(x)D(f)
\end{equation*}
for all $f,g\in\ell^1(h)$.\footnote{In Section~\ref{sec:preliminaries}, amenability notions will be recalled in more detail (including references).} In contrast to the group case, such amenability properties are rather strong conditions on ($L^1$-algebras of) polynomial hypergroups. Both weak and point amenability can be characterized via properties of the derivatives of the polynomials, which makes such considerations also interesting with regard to special functions. Concerning the class of associated symmetric Pollaczek polynomials, we will provide full characterizations of weak amenability of $\ell^1(h)$, as well as full characterizations of point amenability of $\ell^1(h)$, by precisely specifying the corresponding parameter regions (Theorem~\ref{thm:pwamassocpoll}). Moreover, we shall consider further amenability properties and, for instance, rule out right character amenability.\\

Besides these central parts Section~\ref{sec:Turan} and Section~\ref{sec:amenability}, the paper is organized as follows: in Section~\ref{sec:preliminaries}, we collect some preliminaries and particularly recall a general criterion on Tur\'{a}n's inequality and property (P) due to Szwarc, some basics about chain sequences and, in greater detail, several amenability properties. Section~\ref{sec:rw} provides a useful transformation into systems with more convenient recurrence relations and easier asymptotic behavior. These systems are easily seen to satisfy Tur\'{a}n type inequalities; such inequalities will also be a crucial tool throughout the paper. Finally, in Section~\ref{sec:extended} we shall extend our main results to a larger parameter region, including some $\alpha\leq-1/2$ (Theorem~\ref{thm:extended}).

\section{Preliminaries}\label{sec:preliminaries}

Given some arbitrary sequence $(P_n(x))_{n\in\mathbb{N}_0}$ which satisfies a recurrence relation as in Section~\ref{sec:intro}, it may be very hard to check whether property (P) is satisfied or not, and we are not aware of any simple and convenient characterization (for instance, in terms of the coefficients $(a_n)_{n\in\mathbb{N}}$ and $(c_n)_{n\in\mathbb{N}}$, or in terms of the orthogonalization measure $\mu$). In a series of papers, Szwarc et al. gave several criteria which can help to tackle such problems, extending earlier work of Askey \cite{As70}. The following sufficient criterion is from Szwarc's result \cite[Theorem 1 p. 960]{Sz92b}:

\begin{theorem}\label{thm:szwarcnonneg}
If $(c_n)_{n\in\mathbb{N}}$ is nondecreasing and bounded from above by $1/2$, then property (P) is satisfied.
\end{theorem}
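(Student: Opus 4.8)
The plan is to convert property (P) into a difference equation for the linearization coefficients and then to force nonnegativity by a discrete maximum principle. First I would multiply the recurrence \eqref{eq:threetermrec} (in the index $m$) by $P_n(x)$ and expand both sides using \eqref{eq:productlinear}. Comparing the coefficients of $P_k(x)$ and inserting $b_n\equiv0$ (so that $P_1P_j=a_jP_{j+1}+c_jP_{j-1}$, with the obvious boundary conventions at small indices) yields, for fixed $n$, the recursion
\begin{equation*}
a_m\,g(m+1,n;k)+c_m\,g(m-1,n;k)=a_{k-1}\,g(m,n;k-1)+c_{k+1}\,g(m,n;k+1),
\end{equation*}
to be read as a forward evolution in $m$. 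The two initial rows are nonnegative, since $g(0,n;k)=\delta_{n,k}$ and $g(1,n;k)=a_n\delta_{n+1,k}+c_n\delta_{n-1,k}$, and the symmetry $g(m,n;k)=g(n,m;k)$, the support constraint $|m-n|\le k\le m+n$ from \eqref{eq:productlinear}, and the parity restriction (only $m+n-k$ even occurs, these polynomials being symmetric because $b_n\equiv0$) confine the unknowns to a discrete cone on which the evolution runs; in particular the extremal entries $g(m,n;m+n)$ and $g(m,n;|m-n|)$ are products of the $a_\bullet$ and $c_\bullet$ and hence automatically nonnegative, so the two ``characteristic'' boundaries of the cone carry nonnegative data.

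Second, I would recognise this as a discrete hyperbolic (wave-type) equation with ``time'' $m$ and ``space'' $k$, and attempt to prove $g(m,n;k)\ge0$ by induction on $m$. Solving for the top index gives
\begin{equation*}
g(m+1,n;k)=\frac{1}{a_m}\bigl[a_{k-1}\,g(m,n;k-1)+c_{k+1}\,g(m,n;k+1)-c_m\,g(m-1,n;k)\bigr],
\end{equation*}
so the sole obstruction to a naïve induction is the subtracted term $-c_m\,g(m-1,n;k)$: nonnegativity of the two preceding rows does not by itself force $g(m+1,n;k)\ge0$. The entire difficulty of the theorem is concentrated in controlling this single negative contribution, and this is the step I expect to be the main obstacle.

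Third, this is exactly where the two hypotheses enter. The bound $c_n\le\tfrac12$ is equivalent to $a_n=1-c_n\ge\tfrac12\ge c_n$, i.e.\ to the diagonal dominance $a_n\ge c_n$, while $(c_n)_{n\in\mathbb{N}}$ nondecreasing means that $(a_n)_{n\in\mathbb{N}}$ is nonincreasing, so the larger transition weights sit at the smaller indices; diagonal dominance together with this monotone ordering of the weights is the classical signature under which such three-point schemes preserve positivity. Concretely, I would replace the bare induction by a strengthened invariant carried along with nonnegativity (a monotone comparison between neighbouring entries, of gradient type, relating values like $g(m,n;k-1)$ to $g(m-1,n;k)$), chosen so that at a hypothetical first negative entry in row $m+1$ the displayed recursion together with the inequalities $a_\bullet\ge c_\bullet$ and the monotonicity of the coefficients produces a contradiction. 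Identifying and propagating the correct invariant is the technical heart of the argument; once it is in place, nonnegativity of all $g(m,n;k)$, and hence property (P), follows. Alternatively, and in line with the cited source, one may simply verify that the above difference equation fits Szwarc's general discrete boundary value problem framework and invoke the corresponding maximum principle directly.
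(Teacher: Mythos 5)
Your reduction is set up correctly, and it is in fact the right frame of reference: the paper itself contains no proof of Theorem~\ref{thm:szwarcnonneg}---it is imported verbatim from \cite[Theorem 1, p.~960]{Sz92b}---so the comparison must be with Szwarc's original argument, whose point of departure is exactly yours. Multiplying \eqref{eq:threetermrec} by $P_n(x)$ and comparing coefficients via \eqref{eq:productlinear} (with $b_n\equiv0$, hence $a_0=1$ and $P_1(x)=x$) does yield
\begin{equation*}
a_m\,g(m+1,n;k)+c_m\,g(m-1,n;k)=a_{k-1}\,g(m,n;k-1)+c_{k+1}\,g(m,n;k+1),
\end{equation*}
the rows $m=0,1$ are nonnegative, the parity and support restrictions confine the problem to the cone $|m-n|\leq k\leq m+n$ with nonnegative data on both characteristic boundaries, and you correctly isolate the single subtracted term $-c_m\,g(m-1,n;k)$ as the obstruction to naive induction, with the hypotheses entering as $a_n=1-c_n\geq\frac{1}{2}\geq c_n$ plus monotonicity. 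All of this is faithful to Szwarc's approach.

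The genuine gap is at the point you yourself flag as the technical heart: no invariant is actually exhibited. ``Choose a strengthened inductive hypothesis so that a first negative entry produces a contradiction'' is a proof obligation, not a proof, and it is not a routine one---single-entry comparisons between neighbouring values of $g$ do not propagate in any obvious way, which is precisely why Szwarc devotes a separate companion paper (``Orthogonal polynomials and a discrete boundary value problem I'') to establishing a general nonnegativity theorem for such hyperbolic schemes, of which \cite{Sz92b} is the specialization to orthogonal polynomials. Roughly, what is propagated there is nonnegativity of suitable weighted partial sums of entries taken along characteristic directions rather than of individual entries; the double induction works because the domination $a_\bullet\geq c_\bullet$ (from $c_n\leq\frac{1}{2}$) and the monotonicity of the coefficient sequences make each first-order step of the scheme positivity-preserving. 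Your ``gradient-type monotone comparison'' gestures in this direction, but until the correct combination is written down and its propagation verified, the middle of your argument is a plan rather than a proof. Your fallback---verify the hypotheses of Szwarc's general boundary value theorem and invoke its maximum principle---is sound, but note that it is then no longer an independent proof: it coincides in substance with what the paper does by citing \cite{Sz92b}.
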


In \cite{Sz92b}, Theorem~\ref{thm:szwarcnonneg} has been successfully applied to all ultraspherical polynomials for which property (P) is valid. However, for the class of ultraspherical polynomials property (P) was fully understood much earlier and in a much more explicit way due to Dougall's formula for the $g(m,n;k)$ \cite[Theorem 6.8.2]{AAR99}. Hence, it is more interesting to apply Theorem~\ref{thm:szwarcnonneg} to classes for which explicit formulas for the $g(m,n;k)$ are not available---like the class of associated symmetric Pollaczek polynomials studied in this paper, which is a two-parameter generalization of the ultraspherical polynomials.\\

Under the same conditions as in Theorem~\ref{thm:szwarcnonneg}, Szwarc found a criterion for the validity of Tur\'{a}n's inequality \cite[Theorem 1]{Sz98}:

\begin{theorem}\label{thm:szwarcturan}
If $(c_n)_{n\in\mathbb{N}}$ is nondecreasing and bounded from above by $1/2$, then the polynomials $(P_n(x))_{n\in\mathbb{N}_0}$ satisfy Tur\'{a}n's inequality, i.e.,
\begin{align*}
(P_n(x))^2-P_{n+1}(x)P_{n-1}(x)&\geq0\;(n\in\mathbb{N},x\in[-1,1]),\\
(P_n(x))^2-P_{n+1}(x)P_{n-1}(x)&>0\;(n\in\mathbb{N},x\in(-1,1)).
\end{align*}
\end{theorem}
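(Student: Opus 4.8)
The plan is to exploit the rigid structure forced by the hypotheses. Since $b_n\equiv0$ and, from $a_0+b_0+c_0=1$ with $c_0=0$, necessarily $a_0=1-b_0=1$, the recurrence \eqref{eq:threetermrec} collapses to $xP_n(x)=a_nP_{n+1}(x)+c_nP_{n-1}(x)$ with $a_n+c_n=1$ and $c_n\le a_n$ (because $c_n\le1/2$), and $P_1(x)=x$. In particular the polynomials are symmetric, $P_n(-x)=(-1)^nP_n(x)$, so it suffices to treat $x\in[0,1]$, and one checks at once that $P_n(\pm1)^2=1$, whence the Tur\'{a}n determinant $D_n(x):=P_n(x)^2-P_{n+1}(x)P_{n-1}(x)$ vanishes at $x=\pm1$, consistent with the claimed boundary behaviour. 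Eliminating $P_{n-1}$ via the recurrence yields the quadratic-form representation $c_nD_n(x)=c_nP_n^2-xP_nP_{n+1}+a_nP_{n+1}^2$ (arguments $x$ suppressed), and eliminating $P_{n+2}$ from $D_{n+1}=P_{n+1}^2-P_{n+2}P_n$ gives $a_{n+1}D_{n+1}(x)=c_{n+1}P_n^2-xP_nP_{n+1}+a_{n+1}P_{n+1}^2$; the base case is the explicit identity $D_1(x)=\tfrac{c_1}{a_1}(1-x^2)$.

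The decisive observation is that both $c_nD_n$ and $a_{n+1}D_{n+1}$ are quadratic forms evaluated at the \emph{same} vector $(P_n(x),P_{n+1}(x))$. Writing $U_n:=c_nP_n^2-xP_nP_{n+1}+a_nP_{n+1}^2$ and $V_n:=a_nP_n^2-xP_nP_{n+1}+c_nP_{n+1}^2$ (the form obtained by interchanging $a_n\leftrightarrow c_n$), a short computation gives
\[
a_{n+1}D_{n+1}(x)=\theta_n\,U_n(x)+(1-\theta_n)\,V_n(x),\qquad \theta_n=\frac{a_n-c_{n+1}}{a_n-c_n},
\]
and both hypotheses enter precisely here: monotonicity $c_{n+1}\ge c_n$ forces $\theta_n\le1$, while $c_n+c_{n+1}\le1$ (from the bound $c_\bullet\le1/2$) forces $\theta_n\ge0$; in the degenerate case $c_n=1/2$ one has $U_n=V_n$ and the discriminant $x^2-4a_nc_n=x^2-1\le0$ renders the form directly definite. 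Hence $a_{n+1}D_{n+1}$ is a convex combination of $U_n=c_nD_n$ and $V_n$, so that $D_{n+1}\ge0$ would follow from $D_n\ge0$ together with the \emph{companion inequality} $V_n\ge0$. I would therefore run a simultaneous induction on the two assertions $D_n\ge0$ and $V_n\ge0$ on $[-1,1]$.

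The main obstacle is the companion inequality $V_n(x)\ge0$: unlike $U_{n+1}$, the form $V_{n+1}$ pulled back to the vector $(P_n,P_{n+1})$ does \emph{not} reduce to a nonnegative combination of $U_n$ and $V_n$ (its cross term can change sign), so it cannot be propagated by the same elementary algebra. To control it I would pass to the ratio $r_n(x):=P_{n+1}(x)/P_n(x)$, which obeys the continued-fraction recursion $r_n=(x-c_n/r_{n-1})/a_n$ with $r_0=x$; at points where $P_n(x)=0$ one argues directly, since by the interlacing of the zeros of orthogonal polynomials $P_{n-1}(x)$ and $P_{n+1}(x)$ are then nonzero of opposite sign, so that $D_n(x)=-P_{n+1}(x)P_{n-1}(x)>0$ there. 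Dividing the forms by $P_n^2$, both $D_{n+1}\ge0$ and $V_n\ge0$ translate into the statement that $r_n(x)$ never lies strictly between the two fixed points of the M\"{o}bius map $t\mapsto(x-c_{n+1}/t)/a_{n+1}$, i.e.\ strictly between the roots of $a_{n+1}t^2-xt+c_{n+1}$ (when these are real, which for small $|x|$ they are not and the claim is automatic). The crux is to show that the monotonicity of $(c_n)$ makes the relevant fixed-point intervals nested so that the continued-fraction orbit $(r_n)$ is trapped on the correct side for every $x\in[0,1]$; this is exactly the kind of trapping handled by the theory of chain sequences, and I would close it by exhibiting a suitable parameter sequence for the chain sequence associated with $(a_{n-1}c_n)$, using that $c_n\nearrow c_\infty\le1/2$ so that for large $n$ the discriminant $x^2-4a_nc_n$ is uniformly controlled.

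Finally, for the strict inequality on $(-1,1)$ I would track where equality can occur. At zeros of $P_n$ the interlacing argument already yields $D_n>0$; away from them, the convex-combination and chain-sequence steps are strict in the interior because the discriminants $x^2-4a_nc_n$ and the factor $1-x^2$ from the base case stay strictly positive, so the only zeros of $D_n$ in $[-1,1]$ are at $x=\pm1$. Equivalently, one shows $D_n(x)=(1-x^2)Q_n(x)$ with $Q_n>0$ on $[-1,1]$, which is visible in the base case $Q_1\equiv c_1/a_1$ and can be propagated by the same induction, now carrying strict inequalities in the interior.
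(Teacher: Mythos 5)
Your algebraic skeleton is correct and verifiable: with $b_n\equiv0$ one indeed gets $a_0=1$, $P_1(x)=x$, and the two identities $c_nD_n=c_nP_n^2-xP_nP_{n+1}+a_nP_{n+1}^2$ and $a_{n+1}D_{n+1}=c_{n+1}P_n^2-xP_nP_{n+1}+a_{n+1}P_{n+1}^2$ hold, as does the convex-combination formula with $\theta_n=\frac{a_n-c_{n+1}}{a_n-c_n}\in[0,1]$ (monotonicity gives $\theta_n\le1$, the bound $c_n,c_{n+1}\le\frac12$ gives $\theta_n\ge0$). But the proof is not complete, and the gap sits exactly where you place it: the companion inequality $V_n\ge0$ is never established. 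Two concrete defects there. First, the reduction to ratios is misstated: dividing by $P_n^2$, the condition $D_{n+1}\ge0$ asks that $r_n$ avoid the root interval of $a_{n+1}t^2-xt+c_{n+1}$ (roots with product $c_{n+1}/a_{n+1}\le1$), whereas $V_n\ge0$ asks that $r_n$ avoid the root interval of $c_nt^2-xt+a_n$, whose roots have product $a_n/c_n\ge1$ and lie in $[1,\infty)$ when real; these are reciprocal-type pairs on opposite sides of $1$, so your claim that both reduce to the \emph{same} fixed-point interval is false, and the "nested intervals" picture has to be redone from scratch. Second, the trapping of the continued-fraction orbit $(r_n)$ is only announced ("I would close it by exhibiting a suitable parameter sequence\dots"); inside $\mathsf{supp}\;\mu$ the ratio $r_n(x)$ sweeps through all of $\mathbb{R}$ as $x$ or $n$ varies, so the assertion that the orbit stays on the correct side for \emph{every} $x\in[0,1]$ is precisely the content of the theorem and cannot be waved at via chain sequences without an actual parameter sequence in hand. (Note also that the paper itself gives no proof of Theorem~\ref{thm:szwarcturan}; it quotes it from \cite{Sz98}, so the comparison is with Szwarc's argument.)

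The frustrating part is that the companion inequality is unnecessary: your own two identities already close the induction. Subtracting them gives
\[
a_{n+1}D_{n+1}-c_nD_n=(c_{n+1}-c_n)\bigl(P_n^2-P_{n+1}^2\bigr),
\]
and one splits into two cases at each fixed $x\in[-1,1]$. If $|P_{n+1}(x)|\le|P_n(x)|$, monotonicity of $(c_n)$ makes the right-hand side nonnegative, so $a_{n+1}D_{n+1}\ge c_nD_n\ge0$ by the induction hypothesis. If $|P_{n+1}(x)|>|P_n(x)|$, set $u:=|P_n(x)|$, $v:=|P_{n+1}(x)|$; since $|x|\le1$,
\[
a_{n+1}D_{n+1}\ \ge\ c_{n+1}u^2-uv+a_{n+1}v^2=(v-u)\bigl(a_{n+1}v-c_{n+1}u\bigr)\ \ge\ 0,
\]
because $v>u$ and $a_{n+1}\ge\frac12\ge c_{n+1}$ --- in this case neither the induction hypothesis nor the monotonicity of $(c_n)$ is even needed. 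Together with your base case $D_1=\frac{c_1}{a_1}(1-x^2)$ this yields the weak inequality, and strictness on $(-1,1)$ propagates: in the second case the estimate $-|x|uv>-uv$ is strict unless $uv=0$, and if $P_n(x)=0$ then $a_{n+1}D_{n+1}=a_{n+1}P_{n+1}^2>0$ since consecutive orthogonal polynomials share no zero; in the first case $a_{n+1}D_{n+1}\ge c_nD_n>0$. This two-case argument is essentially Szwarc's proof in \cite{Sz98}; your decomposition is a correct fragment of it, but the route you chose to finish ($V_n\ge0$ via continued-fraction trapping) is both unproven and, as formulated, incorrect in its key reduction.
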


Theorem~\ref{thm:szwarcnonneg} and Theorem~\ref{thm:szwarcturan} will be crucial tools for our study of the associated symmetric Pollaczek polynomials. We shall also need the following, which is a consequence of \cite[Theorem (2.2)]{La94}:

\begin{theorem}\label{thm:lassersupportorthmeasure}
If $(c_n)_{n\in\mathbb{N}}$ is nondecreasing and convergent to $c\in(0,1/2]$, then (property (P) is satisfied, cf. above, and)
\begin{equation*}
\mathrm{supp}\;\mu=[-2\sqrt{c(1-c)},2\sqrt{c(1-c)}].
\end{equation*}
Moreover, if $c=1/2$, then
\begin{equation*}
\mathcal{X}^b(\mathbb{N}_0)=\widehat{\mathbb{N}_0}=\mathrm{supp}\;\mu=[-1,1].
\end{equation*}
\end{theorem}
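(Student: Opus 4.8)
The plan is to translate the statement about $\mathsf{supp}\,\mu$ into the language of Jacobi matrices and chain sequences and to treat the two assertions separately. First I would record the consequences of the hypotheses for the recurrence data: $b_n\equiv0$ (which includes $b_0=0$, so that the $P_n$ are symmetric and $\mu$ is symmetric about the origin) forces $a_0=1$ via $a_0+b_0+c_0=1$. Passing to the monic version, the coefficients in the monic recurrence displayed above become $\beta_n=0$ and $\gamma_n=a_0^2c_na_{n-1}=c_n(1-c_{n-1})$ ($n\in\mathbb{N}$), so the associated orthonormal Jacobi matrix $J$ has vanishing diagonal and off-diagonal entries $\sqrt{\gamma_{n+1}}=\sqrt{c_{n+1}(1-c_n)}\to\sqrt{c(1-c)}$. (Property (P) itself is immediate from Theorem~\ref{thm:szwarcnonneg}, since $(c_n)_{n\in\mathbb{N}}$ is nondecreasing and bounded by $\tfrac12$.)

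The second step is to pin down the essential spectrum. As $J$ is a compact perturbation of the free Jacobi matrix with constant off-diagonal $\sqrt{c(1-c)}$ (the perturbing entries tend to $0$), Weyl's theorem gives $\sigma_{\mathrm{ess}}(J)=[-2\sqrt{c(1-c)},2\sqrt{c(1-c)}]$, a full interval. Since $\sigma_{\mathrm{ess}}(J)$ coincides with $\mathsf{supp}\,\mu$ up to isolated mass points, this already yields $[-2\sqrt{c(1-c)},2\sqrt{c(1-c)}]\subseteq\mathsf{supp}\,\mu$ with no interior gaps; it remains to exclude mass points outside, i.e. to show $\eta:=\sup\mathsf{supp}\,\mu\le2\sqrt{c(1-c)}$ (the lower end then follows by symmetry).

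For the exclusion I would use the chain-sequence characterisation of the true interval of orthogonality \cite{Ch78}: in the symmetric case $\eta$ is the smallest $X>0$ for which $\bigl(\gamma_{n+1}/X^2\bigr)_{n\in\mathbb{N}}$ is a chain sequence. Taking $X=2\sqrt{c(1-c)}$ and writing $d_n:=\gamma_{n+1}/X^2=\dfrac{c_{n+1}(1-c_n)}{4c(1-c)}$, I would show that the minimal parameters $m_0:=0$, $m_n:=d_n/(1-m_{n-1})$ satisfy $m_n\le c_{n+1}/(2c)$, and hence $m_n\le\tfrac12<1$, for all $n$. After clearing denominators the induction step reduces exactly to $(1-2c)(c-c_n)\ge0$, which holds precisely because $c\le\tfrac12$ and because monotonicity gives $c_n\le c$. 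This is the heart of the proof: the monotonicity hypothesis is used solely, but essentially, to keep the minimal parameters bounded away from $1$, so that $(d_n)$ is a chain sequence and $\eta\le2\sqrt{c(1-c)}$ follows. Combined with the essential-spectrum computation this gives $\mathsf{supp}\,\mu=[-2\sqrt{c(1-c)},2\sqrt{c(1-c)}]$.

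Finally, for $c=\tfrac12$ the interval is $[-1,1]$, and since $a_0=1$ the general inclusion $\{1\}\cup\mathsf{supp}\,\mu\subseteq\widehat{\mathbb{N}_0}\subseteq[1-2a_0,1]=[-1,1]$ immediately forces $\widehat{\mathbb{N}_0}=[-1,1]$. It then remains to rule out non-real characters, i.e. to prove $\mathcal{X}^b(\mathbb{N}_0)\subseteq\mathbb{R}$; using $\mathcal{X}^b(\mathbb{N}_0)=\{z:|P_n(z)|\le1\ \forall n\}$, I would invoke a Poincar\'e-type asymptotic analysis of the recurrence, whose limiting characteristic equation is the Chebyshev one, to conclude that $|P_n(z)|\to\infty$ for every $z\notin[-1,1]$, whence $\mathcal{X}^b(\mathbb{N}_0)=\widehat{\mathbb{N}_0}=[-1,1]$. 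I expect the chain-sequence estimate to be the main obstacle; the complex-character exclusion is more routine and can alternatively be read off directly from \cite[Theorem (2.2)]{La94}, of which the whole statement is a consequence.
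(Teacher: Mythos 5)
The paper itself contains no proof of this theorem: it is imported as a consequence of \cite[Theorem (2.2)]{La94} (as you note at the end of your proposal), so your self-contained argument is necessarily a different route, and its core is sound. The decomposition into (a) a Weyl/Blumenthal computation of the essential spectrum of the Jacobi matrix, which gives $[-2\sqrt{c(1-c)},2\sqrt{c(1-c)}]\subseteq\mathsf{supp}\;\mu$ without gaps, and (b) a chain-sequence bound excluding mass points outside, is the natural operator-theoretic proof, and your key induction is correct: from $m_{n-1}\le c_n/(2c)$ one gets $m_n\le\frac{c_{n+1}(1-c_n)}{2(1-c)(2c-c_n)}$, and the comparison with $c_{n+1}/(2c)$ reduces after clearing denominators exactly to $(1-2c)(c-c_n)\ge0$; this is where both hypotheses (monotonicity and $c\le\frac12$) enter, precisely as you say, and it keeps $m_n\le\frac12<1$. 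One index caveat: the chain-sequence criterion for $\sup\mathsf{supp}\;\mu\le X$ tests the full sequence $(\gamma_n/X^2)_{n\ge1}$, including $\gamma_1/X^2=c_1/(4c(1-c))$, whereas your $(d_n)$ starts at $\gamma_2$, and a chain-sequence property of a tail does not automatically extend to the whole sequence. This is harmless here, since the identical induction with the claim $m_n\le c_n/(2c)$ (base case $m_1=c_1/(4c(1-c))\le c_1/(2c)$, which is equivalent to $c\le\frac12$) covers the unshifted sequence, but it should be said.

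The one genuinely under-argued step is the exclusion of non-real characters for $c=\frac12$. Poincar\'{e}'s theorem only yields that $P_{n+1}(z)/P_n(z)$ converges to \emph{one} of the two characteristic roots $z\pm\sqrt{z^2-1}$; it does not by itself rule out that for some $z\notin[-1,1]$ the polynomial solution is the minimal one, in which case $P_n(z)\to0$ geometrically and then $z\in\mathcal{X}^b(\mathbb{N}_0)$---which is exactly what you must forbid. The fix needs an extra argument: for real $z$ with $|z|>1$, writing $P_n(z)=\prod_i(z-x_{n,i})/(1-x_{n,i})$ with all zeros $x_{n,i}\in(-1,1)$ and using $P_n(1)=1$ together with the symmetry of the $P_n$ gives $|P_n(z)|\ge1$; for non-real $z$, if $|P_n(z)|$ decayed like $|z-\sqrt{z^2-1}|^n<1$, then since $h(n+1)/h(n)=a_n/c_{n+1}\to1$ makes $h$ subexponential, one would get $\sum_n h(n)|P_n(z)|^2<\infty$, i.e.\ an $\ell^2$ polynomial solution at a non-real point, contradicting the limit-point property of the bounded self-adjoint Jacobi matrix. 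With that supplement (or by simply invoking \cite[Theorem (2.2)]{La94} for this part, which is all the paper does for the whole statement) your proof is complete; what your route buys over the paper's citation is an elementary, self-contained argument in which the role of the hypotheses---monotonicity keeping the minimal parameters away from $1$, and $c\le\frac12$ driving the sign in $(1-2c)(c-c_n)$---is completely transparent.
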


Another important tool will be chain sequences. These sequences have various applications in the theory of orthogonal polynomials \cite{Ch78} and, in particular, in the theory of polynomial hypergroups (cf. \cite{La94}, for instance). Let us recall some basics.
\begin{itemize}
\item A sequence $(\Lambda_n)_{n\in\mathbb{N}}\subseteq(0,1)$ is called a chain sequence if there is some $p_0\in[0,1)$ and a sequence $(p_n)_{n\in\mathbb{N}}\subseteq(0,1)$ such that $\Lambda_n\equiv p_n(1-p_{n-1})$; the sequence $(p_n)_{n\in\mathbb{N}_0}$ is called a parameter sequence for $(\Lambda_n)_{n\in\mathbb{N}}$. If $(p_n)_{n\in\mathbb{N}_0}$ and $(p_n^{\prime})_{n\in\mathbb{N}_0}$ are two parameter sequences for $(\Lambda_n)_{n\in\mathbb{N}}$ and $p_0<p_0^{\prime}$, then $p_n<p_n^{\prime}$ for all $n\in\mathbb{N}$ (and vice versa). A parameter sequence $(m_n)_{n\in\mathbb{N}_0}$ is called minimal if $m_n<p_n\;(n\in\mathbb{N}_0)$ for every other parameter sequence $(p_n)_{n\in\mathbb{N}_0}$, and a parameter sequence $(M_n)_{n\in\mathbb{N}_0}$ is called maximal if $p_n<M_n\;(n\in\mathbb{N}_0)$ for every other parameter sequence $(p_n)_{n\in\mathbb{N}_0}$. For every chain sequence $(\Lambda_n)_{n\in\mathbb{N}}$, the minimal parameter sequence $(m_n)_{n\in\mathbb{N}_0}$ and the maximal parameter sequence $(M_n)_{n\in\mathbb{N}_0}$ exist, and one has $m_0=0$.
\item If $(p_n)_{n\in\mathbb{N}_0}$ and $(p_n^{\prime})_{n\in\mathbb{N}_0}$ are two parameter sequences for $(\Lambda_n)_{n\in\mathbb{N}}$, then at least one of them is the maximal parameter sequence or $\lim_{n\to\infty}p_n/p_n^{\prime}=1$. If $(p_n)_{n\in\mathbb{N}_0}\neq(M_n)_{n\in\mathbb{N}_0}$ is a parameter sequence for $(\Lambda_n)_{n\in\mathbb{N}}$, then the infinite product $\prod_{n=1}^{\infty}m_n/p_n$ converges absolutely.
\item If $(\Lambda_n)_{n\in\mathbb{N}}$ is nondecreasing, then $(m_n)_{n\in\mathbb{N}_0}$ is strictly increasing and $(M_n)_{n\in\mathbb{N}_0}$ is nonincreasing. Finally, if $\Lambda_n>1/4$ for all $n\in\mathbb{N}$, then $(m_n)_{n\in\mathbb{N}_0}$ is strictly increasing.
\end{itemize}
These basics can either be found in \cite{Ch78,Wa48} or are obvious.\\

Section~\ref{sec:amenability} is devoted to amenability properties in a Banach algebraic sense. Let $A$ be a Banach algebra. Recall that
\begin{itemize}
\item a linear mapping $D$ from $A$ into a Banach $A$-bimodule\footnote{A Banach $A$-bimodule is a Banach space which is also an $A$-bimodule and acts continuously \cite{Da00}.} $X$ is called a derivation if $D(ab)=a\cdot D(b)+D(a)\cdot b\;(a,b\in A)$, an inner derivation if $D(a)=a\cdot x-x\cdot a\;(a\in A)$ for some $x\in X$, and a point derivation at $\varphi\in\Delta(A)$ (structure space) if $X=\mathbb{C}$ and $D(a b)=\varphi(a)D(b)+\varphi(b)D(a)\;(a,b\in A)$ \cite{Da00},
\item $A$ is called amenable if for every Banach $A$-bimodule $X$ every bounded derivation into the dual module $X^\ast$ is an inner derivation \cite{Jo72}, weakly amenable if every bounded derivation into $A^\ast$ is an inner derivation \cite{Jo88}, $\varphi$-amenable w.r.t. $\varphi\in\Delta(A)$ if for every Banach $A$-bimodule $X$ such that $a\cdot x=\varphi(a)x\;(a\in A,x\in X)$ every bounded derivation from $A$ into the dual module $X^\ast$ is an inner derivation \cite{KLP08a}, and right character amenable if $A$ is $\varphi$-amenable for every $\varphi\in\Delta(A)$ and $A$ has a bounded right approximate identity \cite{KLP08b,Mo08}.
\end{itemize}
If there exists a nonzero bounded point derivation at some $\varphi\in\Delta(A)$, then $A$ necessarily fails to be weakly amenable \cite[Theorem 2.8.63]{Da00} and $A$ is also not $\varphi$-amenable \cite[Remark 2.4]{KLP08a} (hence, $A$ is not right character amenable and particularly not amenable). Moreover, if $A$ is commutative, then weak amenability reduces to the property that there exists no nonzero bounded derivation from $A$ into $A^\ast$ \cite{BCD87}.\\

If $G$ is a locally compact group, then the group algebra $L^1(G)$ is amenable if and only if $G$ is amenable in the group sense \cite{Jo72}. Furthermore, $L^1(G)$ is right character amenable if and only if $G$ is amenable \cite{KLP08b}. However, $L^1(G)$ is always weakly amenable \cite{Jo91}; in particular, there are no nonzero bounded point derivations.\\

A direct generalization of amenability in the group sense (i.e., the existence of a left-invariant mean on $L^{\infty}(G)$) to polynomial hypergroups is possible but not very fruitful because this property would always be satisfied just due to commutativity of these hypergroups \cite[Example 3.3 (a)]{Sk92} (similarly, it is well-known that all Abelian locally compact groups are amenable). It is much more interesting to consider the $L^1$-algebra $\ell^1(h)$ instead. There exist several general results on amenability properties of $\ell^1(h)$ \cite{Ka15,Ka16b,La07,La09c,La09a,La09b,LP10,Pe11,Wo84}. For instance, $\ell^1(h)$ fails to be amenable whenever $h(n)\to\infty\;(n\to\infty)$ \cite[Theorem 3]{La07}. There are many cases in which nonzero bounded point derivations exist \cite{Ka15,La09a} (which, of course, is very different to the group case recalled above). If one identifies the Hermitian structure space $\Delta_s(\ell^1(h))$ with $\widehat{\mathbb{N}_0}$ as recalled in Section~\ref{sec:intro}, a point derivation w.r.t. some $\varphi_x\in\Delta_s(\ell^1(h))$, $x\in\widehat{\mathbb{N}_0}$, becomes a linear functional $D_x:\ell^1(h)\rightarrow\mathbb{C}$ which satisfies
\begin{equation*}
D_x(f\ast g)=\widehat{f}(x)D_x(g)+\widehat{g}(x)D_x(f)\;(f,g\in\ell^1(h))
\end{equation*}
\cite{La09a}. We call $\ell^1(h)$ `point amenable' if there is no $x\in\widehat{\mathbb{N}_0}$ which admits a nonzero bounded point derivation.\footnote{Observe that we do not consider point derivations w.r.t. $\varphi\in\Delta(\ell^1(h))\backslash\Delta_s(\ell^1(h))$; however, the class of associated symmetric Pollaczek polynomials which is studied in this paper satisfies $\Delta(\ell^1(h))\backslash\Delta_s(\ell^1(h))=\emptyset$ anyway. Note that `point amenability' in our sense must not be confused with `pointwise amenability' considered in \cite{DL10}.} Hence, if the Banach algebra $\ell^1(h)$ is not point amenable, then it is neither weakly nor right character amenable (and particularly not amenable). \cite[Theorem 1]{La09a} provides a characterization in terms of the derivatives of the polynomials (see Theorem~\ref{thm:lasserpoint} below).\\

For the rest of the section, it is always supposed that property (P) is satisfied.

\begin{theorem}\label{thm:lasserpoint}
Let $x\in\widehat{\mathbb{N}_0}$. Then the following are equivalent:
\begin{enumerate}[(i)]
\item $\{P_n^{\prime}(x):n\in\mathbb{N}_0\}$ is bounded.
\item There exists a nonzero bounded point derivation at $x$.
\end{enumerate}
\end{theorem}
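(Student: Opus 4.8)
The plan is to show that, up to a scalar, the only candidate for a bounded point derivation at $x$ is the functional obtained by differentiating the Fourier transform, and that its boundedness is governed precisely by $\sup_{n\in\mathbb{N}_0}|P_n'(x)|$. The guiding observation is that, via \eqref{eq:threetermrec}, the whole algebra $\ell^1(h)$ is generated by $\delta_1$, so a point derivation is rigidly determined by a single three-term recurrence; I would therefore work throughout with the sequence $d_n:=D(\delta_n)$ and compare it with $P_n'(x)$.

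For the implication (i)$\Rightarrow$(ii), suppose $M:=\sup_{n\in\mathbb{N}_0}|P_n'(x)|<\infty$ and define $D(f):=\sum_{k=0}^\infty f(k)P_k'(x)h(k)$. Then $|D(f)|\leq M\left\|f\right\|_1$, so $D$ is a bounded linear functional, and $D(\delta_1)=P_1'(x)h(1)=h(1)/a_0\neq0$, so $D\neq0$. It remains to verify the point-derivation identity $D(f\ast g)=\widehat{f}(x)D(g)+\widehat{g}(x)D(f)$. Since both sides are bounded bilinear forms in $(f,g)$ (using $\left\|f\ast g\right\|_1\leq\left\|f\right\|_1\left\|g\right\|_1$ and $|\widehat{f}(x)|\leq\left\|f\right\|_1$) and the finitely supported functions are dense in $\ell^1(h)$, it suffices to check this on $f=\delta_m$, $g=\delta_n$. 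For these, $\widehat{\delta_m\ast\delta_n}=\widehat{\delta_m}\,\widehat{\delta_n}$ is, by \eqref{eq:productlinear}, the genuine polynomial identity $\sum_j(\delta_m\ast\delta_n)(j)P_j(y)h(j)=h(m)h(n)P_m(y)P_n(y)$; differentiating this finite sum term by term at $y=x$ gives $D(\delta_m\ast\delta_n)$ on one side and $h(m)h(n)[P_m'(x)P_n(x)+P_m(x)P_n'(x)]$ on the other, which is exactly $\widehat{\delta_m}(x)D(\delta_n)+\widehat{\delta_n}(x)D(\delta_m)$.

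For the converse (ii)$\Rightarrow$(i), let $D\neq0$ be a bounded point derivation and put $d_n:=D(\delta_n)$. Applying the Leibniz identity to $\delta_0\ast\delta_0=\delta_0$ and using $\widehat{\delta_0}(x)=1$ gives $d_0=0$. Next I would record the convolution identity in $\ell^1(h)$,
\[
\delta_1\ast\delta_n=h(1)\bigl(c_{n+1}\delta_{n+1}+b_n\delta_n+a_{n-1}\delta_{n-1}\bigr),
\]
which follows from \eqref{eq:productlinear} together with the weight symmetry $g(m,n;k)h(k)=g(m,k;n)h(n)$ and the recursion $h(n+1)=(a_n/c_{n+1})h(n)$. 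Applying $D$ and the Leibniz identity (with $\widehat{\delta_m}(x)=P_m(x)h(m)$) yields a three-term recurrence for $(d_n)$ whose inhomogeneity is proportional to $P_n(x)d_1$. Comparing this with the recurrence obtained by differentiating \eqref{eq:threetermrec}, and invoking uniqueness of solutions (since $c_{n+1}>0$), I would conclude $d_n=\lambda P_n'(x)h(n)$ for all $n$, where $\lambda=a_0 d_1/h(1)$; moreover $D\neq0$ forces $d_1\neq0$, hence $\lambda\neq0$. Finally, boundedness gives $|d_n|\leq\left\|D\right\|\left\|\delta_n\right\|_1=\left\|D\right\|h(n)$, whence $|P_n'(x)|\leq\left\|D\right\|/|\lambda|$ for every $n$, i.e.\ $\{P_n'(x):n\in\mathbb{N}_0\}$ is bounded.

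The main obstacle is the bookkeeping in the converse: one must pin down the $h$-weighted convolution coefficients of $\delta_1\ast\delta_n$ correctly and then verify that the differentiated three-term recurrence for $P_n'(x)$ matches the recurrence satisfied by $d_n$ after the substitution $d_n=\lambda P_n'(x)h(n)$ — the cancellations rely precisely on the relations $c_{n+1}h(n+1)=a_n h(n)$ and $a_{n-1}h(n-1)=c_n h(n)$. A secondary point requiring care is the density-and-continuity extension of the point-derivation identity from Dirac masses to all of $\ell^1(h)$ in the direct implication.
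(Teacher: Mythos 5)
Your proposal is correct, but there is nothing in the paper to compare it with: Theorem~\ref{thm:lasserpoint} is quoted from \cite[Theorem 1]{La09a} without proof, so you have in effect reconstructed the known argument, and your route (rigidity of point derivations via the three-term recurrence, with the differentiated Fourier transform $f\mapsto\sum_{k}f(k)P_k^{\prime}(x)h(k)$ as the unique candidate up to scalars) is essentially Lasser's. Both implications check out: in (i)$\Rightarrow$(ii) the reduction to Dirac masses is legitimate since $\left\|f\ast g\right\|_1\leq\left\|f\right\|_1\left\|g\right\|_1$ and $|\widehat{f}(x)|\leq\left\|f\right\|_1$ for $x\in\widehat{\mathbb{N}_0}$, and the identity $\sum_j(\delta_m\ast\delta_n)(j)P_j(y)h(j)=h(m)h(n)P_m(y)P_n(y)$ really is a polynomial identity (both sides are polynomials agreeing on the infinite set $\mathsf{supp}\;\mu$), so termwise differentiation at $y=x$ is valid; in (ii)$\Rightarrow$(i), the substitution $d_n=\lambda P_n^{\prime}(x)h(n)$ with $\lambda=a_0d_1/h(1)$ matches the Leibniz recurrence precisely because $c_{n+1}h(n+1)=a_nh(n)$, $a_{n-1}h(n-1)=c_nh(n)$ and $P_1^{\prime}\equiv\frac{1}{a_0}$, and uniqueness forward in $n$ (from $d_0=0$, $d_1$, using $c_{n+1}>0$) plus $|d_n|\leq\left\|D\right\|h(n)$ closes the argument. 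One parenthetical justification is stated backwards, though: since $g(m,n;k)=h(k)\int_\mathbb{R}P_mP_nP_k\,\mathrm{d}\mu$, the quantity $g(m,n;k)/h(k)$ is symmetric in all three indices, so the correct relation is $g(m,n;k)h(n)=g(m,k;n)h(k)$, not $g(m,n;k)h(k)=g(m,k;n)h(n)$ as you wrote. This slip is harmless because the convolution identity you actually use is correct: $\delta_1\ast\delta_n(j)=g(j,n;1)h(n)$, and with $g(j,n;1)=h(1)\int_\mathbb{R}P_jP_nP_1\,\mathrm{d}\mu$ together with \eqref{eq:threetermrec} one gets the coefficients $h(1)h(n)a_n/h(n+1)=h(1)c_{n+1}$ and $h(1)h(n)c_n/h(n-1)=h(1)a_{n-1}$, exactly as you claimed---but you should fix the misstated symmetry before relying on it elsewhere.
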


Also the following criteria \cite[Proposition 2.1]{Ka15} will be useful for our purposes:

\begin{proposition}\label{prp:kahlerpoint}
The following hold:
\begin{enumerate}[(i)]
\item If
\begin{equation*}
c_n a_{n-1}\leq\frac{1}{4}
\end{equation*}
for all $n\in\mathbb{N}$, then $\widehat{\mathbb{N}_0}=[-1,1]$ and each $x\in(-1,1)$ admits a nonzero bounded point derivation.
\item If $\limsup_{n\to\infty}c_n<1/2$, then $0\in\widehat{\mathbb{N}_0}$ and $0$ admits a nonzero bounded point derivation.
\end{enumerate}
\end{proposition}

Defining $(\kappa_n)_{n\in\mathbb{N}_0}\subseteq c_{00}$ via $\kappa_0:=0$ and the expansions
\begin{equation*}
P_n^\prime(x)=\sum_{k=0}^{n-1}\kappa_n(k)P_k(x)h(k),\;\kappa_n(k):=0\;(k\geq n)
\end{equation*}
for $n\in\mathbb{N}$, or, equivalently, via
\begin{equation*}
\kappa_n=\mathcal{P}^{-1}(P_n^\prime)\;(n\in\mathbb{N}_0),
\end{equation*}
one can characterize weak amenability as follows \cite[Theorem 2]{La07} (or \cite[Theorem 2]{La09b}):

\begin{theorem}\label{thm:lasserweak}
$\ell^1(h)$ is weakly amenable if and only if $\{\left\|\kappa_n\ast\varphi\right\|_\infty:n\in\mathbb{N}_0\}$ is unbounded for all $\varphi\in\ell^\infty\backslash\{0\}$.
\end{theorem}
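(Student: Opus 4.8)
The plan is to invoke the reduction from \cite{BCD87} already recalled above: since $\ell^1(h)$ is commutative, weak amenability is equivalent to the nonexistence of a nonzero bounded derivation $D:\ell^1(h)\rightarrow\ell^\infty=\ell^1(h)^\ast$, where the (symmetric) dual-module action is convolution. Introducing the normalized Diracs $e_n:=\delta_n/h(n)$, so that $\widehat{e_n}=P_n$ and $\left\|e_n\right\|_1=1$, the recurrence \eqref{eq:threetermrec} translates, by injectivity of the Fourier transform on $\ell^1(h)$, into the convolution identity $e_1\ast e_n=a_n e_{n+1}+b_n e_n+c_n e_{n-1}$. Hence every $e_n$ is a convolution polynomial in $e_1$, the finitely supported functions $c_{00}=\mathrm{span}\{e_n:n\in\mathbb{N}_0\}$ form a dense subalgebra topologically generated by $e_1$, and any derivation is determined on $c_{00}$ by the single value $\varphi:=D(e_1)\in\ell^\infty$. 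First I would record $D(e_0)=0$ (derivations annihilate the unit $\delta_0=e_0$) and, applying $D$ to the convolution recurrence and using $D(e_1\ast e_n)=e_1\ast D(e_n)+e_n\ast\varphi$, obtain
\[
a_n D(e_{n+1})=e_1\ast D(e_n)+e_n\ast\varphi-b_n D(e_n)-c_n D(e_{n-1}),
\]
which determines $D(e_n)$ for all $n$ from $\varphi$ because $a_n>0$.

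Next I would identify the closed form. Differentiating \eqref{eq:threetermrec} and using $P_1^\prime\equiv1/a_0$ yields $\tfrac{1}{a_0}P_n+P_1 P_n^\prime=a_n P_{n+1}^\prime+b_n P_n^\prime+c_n P_{n-1}^\prime$. Since $\widehat{\kappa_n}=P_n^\prime$ and $\widehat{e_n}=P_n$, this is the Fourier image of $\tfrac{1}{a_0}e_n+e_1\ast\kappa_n=a_n\kappa_{n+1}+b_n\kappa_n+c_n\kappa_{n-1}$. Comparing this with the recursion for $D(e_n)$ above and checking the base cases $\kappa_0=0$ and $\kappa_1=\tfrac{1}{a_0}e_0$, a routine induction gives the explicit formula $D(e_n)=a_0\,\kappa_n\ast\varphi$ for all $n\in\mathbb{N}_0$. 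In particular $D=0$ if and only if $\varphi=0$, so nonzero derivations correspond exactly to nonzero choices of $\varphi\in\ell^\infty$.

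Finally I would settle boundedness. Since the $e_n$ are unit vectors and $f=\sum_k f(k)h(k)e_k$ with $\left\|f\right\|_1=\sum_k|f(k)|h(k)$, one has $\left\|Df\right\|_\infty\leq\big(\sup_n\left\|D(e_n)\right\|_\infty\big)\left\|f\right\|_1$, while conversely $\left\|D(e_n)\right\|_\infty\leq\left\|D\right\|$. Thus $D$ extends to a bounded derivation precisely when $\sup_n\left\|\kappa_n\ast\varphi\right\|_\infty<\infty$, the fixed factor $a_0>0$ being irrelevant (equivalently, one may absorb it by rescaling $\varphi$). Combining the three steps, a nonzero bounded derivation exists if and only if there is some $\varphi\in\ell^\infty\backslash\{0\}$ for which $\{\left\|\kappa_n\ast\varphi\right\|_\infty:n\in\mathbb{N}_0\}$ is bounded; taking the contrapositive gives exactly the asserted characterization.

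The step I expect to be the main obstacle is the \emph{converse} construction: given $\varphi$, one must verify that the linear extension of $D(e_n):=a_0\,\kappa_n\ast\varphi$ genuinely satisfies the full Leibniz rule $D(e_m\ast e_n)=e_m\ast D(e_n)+e_n\ast D(e_m)$ for \emph{all} $m,n$, not merely for the generator $e_1$. This calls for an induction on $m$ that propagates the identity through the recurrence, together with careful bookkeeping of the dual-module action of $\ell^1(h)$ on $\ell^\infty$ (well defined and bounded by the estimate $\left\|f\ast g\right\|_\infty\leq\left\|f\right\|_1\left\|g\right\|_\infty$ recalled above) and the continuity extension from $c_{00}$ to all of $\ell^1(h)$. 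These verifications, and hence the complete statement, are exactly the content of \cite[Theorem 2]{La07} and \cite[Theorem 2]{La09b}, which I would cite for the details.
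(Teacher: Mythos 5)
Your proposal is correct, and it follows the same route as the paper insofar as the paper has one: Theorem~\ref{thm:lasserweak} is not proven in this paper at all but quoted from \cite{La07} and \cite{La09b}, and the only ingredient of that proof the paper records is Proposition~\ref{prp:lasserweak}, namely the formula $D(\epsilon_n)=a_0\,\kappa_n\ast D(\epsilon_1)$, which your induction re-derives (your $e_n$ is the paper's $\epsilon_n$) with the correct normalization factor $a_0$, consistent with the paper's footnote explaining that its $\kappa_n$ differs from Lasser's original one by exactly that factor; your norm estimates settling both directions of the boundedness equivalence are also right. The one substantive comment concerns the step you flag as the main obstacle: verifying the full Leibniz rule for the map defined by $D(\epsilon_n):=a_0\,\kappa_n\ast\varphi$ needs neither an induction on $m$ nor an appeal to the literature. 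Since $\epsilon_m\ast\epsilon_n=\sum_{k=|m-n|}^{m+n}g(m,n;k)\,\epsilon_k$ by \eqref{eq:productlinear}, the identity $D(\epsilon_m\ast\epsilon_n)=\epsilon_m\ast D(\epsilon_n)+\epsilon_n\ast D(\epsilon_m)$ follows once one checks the $c_{00}$-identity
\begin{equation*}
\sum_{k=|m-n|}^{m+n}g(m,n;k)\,\kappa_k=\epsilon_m\ast\kappa_n+\epsilon_n\ast\kappa_m,
\end{equation*}
and the Fourier transform of this identity is precisely the product rule $(P_mP_n)^\prime=P_m^\prime P_n+P_mP_n^\prime$ applied to \eqref{eq:productlinear} (using $\widehat{\kappa_j}=P_j^\prime$ and $\widehat{\epsilon_j}=P_j$); injectivity of the Fourier transformation on $\ell^1(h)$ gives the identity, bilinearity yields the Leibniz rule on $c_{00}=\mathrm{span}\{\epsilon_n:n\in\mathbb{N}_0\}$, and the estimate $\left\|f\ast g\right\|_\infty\leq\left\|f\right\|_1\left\|g\right\|_\infty$ extends it to all of $\ell^1(h)$ by continuity. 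With that replacement your argument is self-contained rather than deferring the converse to the cited source.
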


In contrast to the characterization provided by Theorem~\ref{thm:lasserweak}, the (weaker) notion of point amenability corresponds to unboundedness of $\{\left\|\kappa_n\ast\varphi\right\|_\infty:n\in\mathbb{N}_0\}$ for all symmetric characters, i.e., for all $\varphi\in\{\alpha_x:x\in\widehat{\mathbb{N}_0}\}$ \cite{Ka15}. In the theory of orthogonal polynomials, the $\kappa_n$ are also of interest of their own and can be used for certain characterizations of ultraspherical polynomials \cite{Ka16a,LO08}, for instance.\footnote{Results which are cited from \cite{Ka16a} can be found in \cite{Ka16b}, too.}\\

Concerning Theorem~\ref{thm:lasserweak}, several problems occur: on the one hand, explicit linearizations of derivatives (i.e., the $\kappa_n$) and explicit linearizations of products (i.e., the $g(m,n;k)$ occurring in the convolution) are often out of reach. On the other hand, the characterization involves the whole space $\ell^\infty$---but many tools of harmonic analysis only work on proper subspaces. Based on Theorem~\ref{thm:lasserweak}, in \cite[Theorem 2.2]{Ka15} and \cite[Theorem 2.3]{Ka15} we found the following necessary criterion and sufficient criterion involving absolute continuity w.r.t. the Lebesgue--Borel measure on $\mathbb{R}$:

\begin{theorem}\label{thm:wamnec}
If $\ell^1(h)$ is weakly amenable, then $\mu$ has a singular part or the Radon--Nikodym derivative $\mu^\prime$ is not absolutely continuous (as a function) on $[\min\mathrm{supp}\;\mu,\max\mathrm{supp}\;\mu]$.
\end{theorem}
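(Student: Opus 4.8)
The plan is to argue by contraposition: assume that $\mu$ is purely absolutely continuous, $\mathrm{d}\mu=w\,\mathrm{d}x$, and that the density $w=\mu'$ is itself absolutely continuous as a function on $[\alpha,\beta]:=[\min\mathsf{supp}\,\mu,\max\mathsf{supp}\,\mu]$, and to deduce that $\ell^1(h)$ fails to be weakly amenable. Since $\ell^1(h)$ is commutative, weak amenability amounts to the nonexistence of a nonzero bounded derivation into $\ell^\infty$, so by Theorem~\ref{thm:lasserweak} (with Proposition~\ref{prp:lasserweak} fixing the relevant normalization $D(\epsilon_1)=\varphi$) it suffices to exhibit a single $\varphi\in\ell^\infty\backslash\{0\}$ for which $\{\left\|\kappa_n\ast\varphi\right\|_\infty:n\in\mathbb{N}_0\}$ is bounded. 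I stress that a symmetric character will not do in general: taking $\varphi=\alpha_x$ yields $\kappa_n\ast\alpha_x(m)=P_m(x)P_n'(x)$, hence $\{\left\|\kappa_n\ast\alpha_x\right\|_\infty\}=\{|P_n'(x)|\}$, and in the absolutely continuous regime $\{P_n'(x)\}$ is generally unbounded at interior points (cf.\ Theorem~\ref{thm:lasserpoint}); the functional we need must be genuinely \emph{spread out} over the spectrum.

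Accordingly I would seek $\varphi$ of the form $\varphi(k)=\int_{[\alpha,\beta]}P_k(x)\,\mathrm{d}\rho(x)$ for a finite signed Borel measure $\rho$ on $[\alpha,\beta]\subseteq\widehat{\mathbb{N}_0}$. Because $|P_k(x)|\leq1$ for $x\in\widehat{\mathbb{N}_0}$, such a $\varphi$ automatically lies in $\ell^\infty$. The payoff of this ansatz is a clean convolution identity: using that translation is self-adjoint with respect to the Haar weights, that $T_m\varphi(k)=\int P_m(x)P_k(x)\,\mathrm{d}\rho(x)$ by the product linearization $\sum_j g(m,k;j)P_j(x)=P_m(x)P_k(x)$, and that $\sum_k\kappa_n(k)P_k(x)h(k)=\widehat{\kappa_n}(x)=P_n'(x)$ (recall $\kappa_n\in c_{00}$), one finds
\begin{equation*}
\kappa_n\ast\varphi(m)=\sum_{k=0}^\infty\kappa_n(k)\,T_m\varphi(k)\,h(k)=\int_{[\alpha,\beta]}P_m(x)\,P_n'(x)\,\mathrm{d}\rho(x)\quad(m,n\in\mathbb{N}_0).
\end{equation*}
Thus $\left\|\kappa_n\ast\varphi\right\|_\infty=\sup_m\bigl|\int P_m P_n'\,\mathrm{d}\rho\bigr|$, and the whole problem reduces to choosing $\rho$ so that these integrals stay bounded uniformly in $m,n$ while $\varphi\neq0$. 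A convenient choice is $\mathrm{d}\rho=s\,\mathrm{d}x$ with a fixed nonzero $s\in C_c^\infty(\alpha,\beta)$: then $\varphi\neq0$ since the polynomials are total in $L^2([\alpha,\beta],\mathrm{d}x)$, and the compact support in the open interval makes all boundary terms in the forthcoming integrations by parts vanish.

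The crux is therefore the uniform estimate $\sup_{m,n}\bigl|\int_\alpha^\beta P_m(x)P_n'(x)s(x)\,\mathrm{d}x\bigr|<\infty$, and this is exactly where both standing hypotheses are needed. One integration by parts gives
\begin{equation*}
\int_\alpha^\beta P_m\,P_n'\,s\,\mathrm{d}x=-\int_\alpha^\beta P_m'\,P_n\,s\,\mathrm{d}x-\int_\alpha^\beta P_m\,P_n\,s'\,\mathrm{d}x,
\end{equation*}
so the symmetric part is controlled outright, $\bigl|\int(P_mP_n)'s\,\mathrm{d}x\bigr|=\bigl|\int P_mP_n\,s'\,\mathrm{d}x\bigr|\leq\left\|s'\right\|_1$ by $|P_m|,|P_n|\leq1$ on $[\alpha,\beta]$. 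The genuine difficulty is the antisymmetric part, i.e.\ bounding each integral individually: on compacta of the interior the Bernstein--Markov inequalities only yield $|P_n'|=O(n)$, so a termwise estimate would grow like $\min(m,n)$. This apparent growth must be cancelled by the oscillation of the polynomials, and it is precisely the absolute continuity of $\mu$ together with that of its density $w$ that furnishes the Szeg\H{o}-type asymptotics of $P_n=p_n/\sqrt{h(n)}$ and $P_n'$ on compact subsets of $(\alpha,\beta)$ (with constants governed by the limits of the recurrence coefficients $a_n,c_n$) under which $\int P_m P_n' s\,\mathrm{d}x$ is an oscillatory integral whose growth factor is removed by a further integration by parts in the spectral variable, by a Riemann--Lebesgue mechanism. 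I expect this uniform bound to be the main obstacle; everything else is bookkeeping. Once it is established, the functional $\varphi$ has bounded orbit $\{\left\|\kappa_n\ast\varphi\right\|_\infty\}$, so Theorem~\ref{thm:lasserweak} shows $\ell^1(h)$ is not weakly amenable, which is the contrapositive of the assertion.
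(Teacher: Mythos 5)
Your overall framing is correct and coincides with the route the paper attributes to \cite[Theorem 2.2]{Ka15} (this theorem is quoted, not proved, in the present paper, which only records that the proof is a contraposition ``relying on an integration by parts argument concerning the Radon--Nikodym derivatives''): contraposition, Theorem~\ref{thm:lasserweak}, a test functional $\varphi(k)=\int P_k\,\mathrm{d}\rho$, and your convolution identity $\kappa_n\ast\varphi(m)=\int P_m P_n^\prime\,\mathrm{d}\rho$ is valid. But there is a genuine gap exactly where you flag it: the uniform bound $\sup_{m,n}\bigl|\int P_m P_n^\prime s\,\mathrm{d}x\bigr|<\infty$ is never established, and the mechanism you propose for it cannot work at this level of generality. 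Szeg\H{o}-type asymptotics for $p_n$ and $p_n^\prime$ on compact subsets of the interior require far more than absolute continuity of $\mu$ and of $w:=\mu^\prime$ (positivity of $w$, a Szeg\H{o}/Nevai-type condition, convergence of the recurrence coefficients); none of this follows from the standing hypotheses---$w$ may vanish on subintervals and $\mathsf{supp}\;\mu$ need not be an interval, which also invalidates your auxiliary claim that $|P_m|\leq1$ on all of $[\alpha,\beta]$ (this is guaranteed only on $\widehat{\mathbb{N}_0}\supseteq\mathsf{supp}\;\mu$, not on gaps of the convex hull). So after your single integration by parts you control only the symmetric part $I(m,n)+I(n,m)$, while the antisymmetric part is left to an oscillatory-integral argument that does not exist under the hypotheses.

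The missing idea is that the antisymmetric obstruction is annihilated by \emph{orthogonality}, provided the measure $\rho$ is not an arbitrary smooth bump $s\,\mathrm{d}x$ but $\mu$ itself (equivalently, $s=w$): take $\varphi=\delta_0=\epsilon_0\in\ell^\infty\backslash\{0\}$, so that $\kappa_n\ast\varphi(m)=\kappa_n(m)=\int P_m P_n^\prime\,\mathrm{d}\mu$. For $m\geq n$ this vanishes since $\deg P_n^\prime=n-1$; for $m<n$, writing $[a,b]:=[\min\mathsf{supp}\;\mu,\max\mathsf{supp}\;\mu]$, one integration by parts gives
\begin{equation*}
\int_a^b\!P_m P_n^\prime w\,\mathrm{d}x=\bigl[P_m P_n w\bigr]_a^b-\int_a^b\!P_m^\prime P_n w\,\mathrm{d}x-\int_a^b\!P_m P_n w^\prime\,\mathrm{d}x,
\end{equation*}
and the middle integral is again $0$ by orthogonality because $\deg P_m^\prime=m-1<n$. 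Absolute continuity of $w$ yields continuity (so the boundary term is at most $2\max_{[a,b]}w$; note $a,b\in\mathsf{supp}\;\mu\subseteq\widehat{\mathbb{N}_0}$, so $|P_k(a)|,|P_k(b)|\leq1$) and $w^\prime\in L^1[a,b]$; moreover $|P_m P_n|\leq1$ holds a.e.\ where the last integrand lives, since $\{w>0\}\subseteq\mathsf{supp}\;\mu$ up to a Lebesgue-null set and $w^\prime=0$ a.e.\ on $\{w=0\}$. Hence $\left\|\kappa_n\ast\delta_0\right\|_\infty=\left\|\kappa_n\right\|_\infty\leq2\max_{[a,b]}w+\left\|w^\prime\right\|_{L^1}$ uniformly in $n$, and Theorem~\ref{thm:lasserweak} rules out weak amenability---an entirely elementary finish. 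Your Chebyshev sanity check is consistent with this: there $w$ fails to be absolutely continuous, and indeed $\kappa_n(m)$ grows like $n$.
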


\begin{theorem}\label{thm:wamsuff}
If each of the conditions
\begin{enumerate}[(i)]
\item $\{\left\|\kappa_n\ast\varphi\right\|_\infty:n\in\mathbb{N}_0\}$ is unbounded for all $\varphi\in\ell^\infty\backslash\mathcal{O}(n^{-1})$,
\item $\mu$ is absolutely continuous, $\mathrm{supp}\;\mu=[-1,1]$, $\mu^\prime>0$ a.e. in $[-1,1]$,
\item $h(n)=\mathcal{O}(n^\alpha)$ (as $n\to\infty$) for some $\alpha\in[0,1)$,
\item $\sup_{n\in\mathbb{N}_0}\int_\mathbb{R}\!h^2(n)P_n^4(x)\,\mathrm{d}\mu(x)<\infty$
\end{enumerate}
holds, then $\ell^1(h)$ is weakly amenable.
\end{theorem}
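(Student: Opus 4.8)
The plan is to verify the criterion of Theorem~\ref{thm:lasserweak}, i.e.\ to show that $\{\|\kappa_n\ast\varphi\|_\infty:n\in\mathbb{N}_0\}$ is unbounded for \emph{every} $\varphi\in\ell^\infty\backslash\{0\}$. Hypothesis (i) already supplies this for all $\varphi\in\ell^\infty\backslash\mathcal{O}(n^{-1})$, so the entire burden is to treat the remaining ``small'' characters $\varphi\in\mathcal{O}(n^{-1})\backslash\{0\}$. For such a $\varphi$ one has $|\varphi(k)|\leq C/k$, and combined with (iii) this yields $\sum_k|\varphi(k)|^2h(k)=\mathcal{O}(\sum_k k^{\alpha-2})<\infty$ because $\alpha<1$; hence $\mathcal{O}(n^{-1})\subseteq\ell^2(h)$ and, in particular, $\varphi\in\ell^2(h)\backslash\{0\}$. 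The Plancherel isomorphism then produces $F:=\mathcal{P}(\varphi)\in L^2(\mathbb{R},\mu)\backslash\{0\}$, which by (ii) we may regard as a nonzero element of $L^2([-1,1],\mu^\prime\,\mathrm{d}x)$.

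The second step is to transport the problem to the measure side. Since $\kappa_n\in c_{00}\subseteq\ell^1(h)$ with $\mathcal{P}(\kappa_n)=P_n^\prime$, the relation $\mathcal{P}(\kappa_n\ast\varphi)=P_n^\prime\,F$ holds in $L^2(\mathbb{R},\mu)$, whence the Plancherel isometry gives
\begin{equation*}
\|\kappa_n\ast\varphi\|_2^2=\int_{-1}^1(P_n^\prime(x))^2|F(x)|^2\,\mathrm{d}\mu(x)=\sum_{m=0}^\infty h(m)\,|(\kappa_n\ast\varphi)(m)|^2 .
\end{equation*}
Arguing by contradiction, I would assume $\|\kappa_n\ast\varphi\|_\infty\leq M$ for all $n$; since $(\kappa_n\ast\varphi)(m)=\int_{-1}^1 P_n^\prime F P_m\,\mathrm{d}\mu$, this means precisely that the orthonormal Fourier coefficients of $P_n^\prime F$ obey the uniform bound $|\langle P_n^\prime F,p_m\rangle|\leq M\sqrt{h(m)}$ for all $m,n$. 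The goal is to show that this is incompatible with $F\neq0$.

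The contradiction is meant to come from two independent growth mechanisms. On the one hand, (ii) ($\mu^\prime>0$ a.e.\ on the full interval $[-1,1]$) forces $F$ to interact with the increasingly oscillatory derivatives $P_n^\prime$, so that $\int_{-1}^1(P_n^\prime)^2|F|^2\,\mathrm{d}\mu$—equivalently $\|\kappa_n\ast\varphi\|_2^2$—should diverge as $n\to\infty$; here I would first establish the divergence for $F$ replaced by a polynomial approximant (a finitely supported $\varphi$, where $P_n^\prime F$ is again a polynomial and everything is explicit) and then remove the approximation. On the other hand, one needs a reverse inequality that keeps the $\ell^\infty$-norm from staying bounded while the $\ell^2(h)$-norm explodes. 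The elementary estimate $\|g\|_2^2\leq\|g\|_\infty\,\|g\|_1$ reduces this to controlling $\|\kappa_n\ast\varphi\|_1$, and it is here that the uniform $L^4$-bound (iv) is expected to enter: expanding $\langle P_n^\prime F,p_m\rangle$ through the triple products $\int F p_j p_m\,\mathrm{d}\mu$ and estimating $\|p_jp_m\|_2\leq\|p_j\|_4\|p_m\|_4\leq C$ by (iv), one obtains the no-concentration control on the coefficient sequence needed to dominate the relevant (truncated) $\ell^1(h)$-masses by a power of $n$ that is beaten by the $\ell^2(h)$-growth.

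I expect the decisive difficulty to be exactly this transfer between the $\ell^2(h)$/$L^2(\mu)$ picture, in which the Plancherel identity together with (ii) makes the divergence transparent, and the $\ell^\infty$-formulation demanded by Theorem~\ref{thm:lasserweak}. The naive route through $\|\cdot\|_1$ is delicate because $\kappa_n\ast\varphi$ need not lie in $\ell^1(h)$ at all, so the quantitative strength of (iv)—how sharply the uniform $L^4$-bound localizes the Fourier coefficients of $P_n^\prime F$—will decide whether the argument closes. Establishing the genuine divergence of $\int(P_n^\prime)^2|F|^2\,\mathrm{d}\mu$ for an \emph{arbitrary} nonzero $F$, rather than only for nice $F$, is the second place where care will be required.
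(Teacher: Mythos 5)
Your opening reduction is sound and in fact coincides with how the cited proof begins (the paper does not reprove this theorem; it quotes it from \cite[Theorem 2.3]{Ka15} and sketches the proof's ingredients in Sections 1--2): by Theorem~\ref{thm:lasserweak} and hypothesis (i) only $\varphi\in\mathcal{O}(n^{-1})\backslash\{0\}$ remains, (iii) with $\alpha<1$ puts such $\varphi$ into $\ell^2(h)$, and $\mathcal{P}(\kappa_n\ast\varphi)=P_n^\prime\,\mathcal{P}(\varphi)$. The genuine gap is the step you build the contradiction on. The inequality $\left\|g\right\|_2^2\leq\left\|g\right\|_\infty\left\|g\right\|_1$ cannot be made to work here: as you yourself concede, $\kappa_n\ast\varphi$ need not lie in $\ell^1(h)$, and (iv) does not repair this. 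Indeed $\left\|\kappa_n\ast\varphi\right\|_1=h(n)^{-1/2}\sum_m\sqrt{h(m)}\,|\langle p_n^\prime F,p_m\rangle_{L^2(\mu)}|$, and since $\sum_m h(m)=\infty$ while the coefficients $\langle p_n^\prime F,p_m\rangle$ are merely square-summable, this weighted sum is generically infinite; the Cauchy--Schwarz bound $\left\|p_jp_m\right\|_2\leq\left\|p_j\right\|_4\left\|p_m\right\|_4$ is uniform in $m$ and so yields no decay and no ``no-concentration'' control, nor is there any mechanism confining the mass of $\kappa_n\ast\varphi$ to a window of size $\mathcal{O}(n)$---convolution with an arbitrary $\ell^2(h)$-element spreads mass over all of $\mathbb{N}_0$. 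Your divergence claim A) is also not established by the proposed approximation scheme: since $(P_n^\prime)^2$ is unbounded in $n$, $L^2(\mu)$-closeness of $F$ to a polynomial approximant gives no control of $\int(P_n^\prime)^2\bigl(|F|^2-|F_\epsilon|^2\bigr)\,\mathrm{d}\mu$ uniformly in $n$, so the divergence does not survive ``removing the approximation''; and even granted, divergence of $\left\|\kappa_n\ast\varphi\right\|_2$ alone is compatible with $\sup_n\left\|\kappa_n\ast\varphi\right\|_\infty<\infty$ because $h$ has infinite total mass---so everything hinges on the failing $\ell^1$-step.

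For contrast, the actual proof runs on a different track, which the paper describes explicitly: one forms the Ces\`{a}ro means $F_n=\frac{1}{n+1}\sum_{k=0}^n\mathcal{P}^{-1}(p_k^2)$ and uses (ii) through the M\'{a}t\'{e}--Nevai--Totik strong convergence theorem for arithmetic means \cite{MNT87} to obtain a limit $F\in\ell^2(h)$ encoding the density $\frac{1}{\pi\mu^\prime(x)\sqrt{1-x^2}}$; hypothesis (iv) enters precisely as the uniform $\ell^2(h)$-bound $\left\|\mathcal{P}^{-1}(p_k^2)\right\|_2^2=\int_\mathbb{R}p_k^4(x)\,\mathrm{d}\mu(x)$ needed for this convergence, not as a coefficient-localization device, and (iii) controls the Haar weights. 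The assumed boundedness of $\{\kappa_n\ast\varphi\}$ is then tested against the translates $T_mF$, and a density argument for the linear span of $\{T_mF:m\in\mathbb{N}_0\}$ combined with the fundamental lemma of the calculus of variations forces $\widehat{\varphi}$ to vanish a.e.\ (here $\mu^\prime>0$ a.e.\ is used), i.e.\ $\varphi=0$, a contradiction. So your mechanism (a) is morally aligned with this oscillation/weak-limit phenomenon, but as proposed the argument does not close, and the role you assign to (iv) is not the one that makes the proof work.
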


Theorem~\ref{thm:wamnec}, Theorem~\ref{thm:wamsuff} and some further ingredients enabled us to completely characterize weak amenability for the important classes of Jacobi, symmetric Pollaczek and associated ultraspherical polynomials (which share the ultraspherical polynomials as common subclass) by precisely specifying the corresponding parameter regions \cite{Ka15}. Moreover, in \cite{Ka15} we obtained analogous characterizations for point amenability. Also the situation w.r.t. amenability, $\varphi$-amenability and right character amenability is completely clarified for these classes (cf. \cite{Ka15,Ka16b}). Our strategy for the remaining associated symmetric Pollaczek polynomials (i.e., for those which do neither belong to the symmetric Pollaczek nor to the associated ultraspherical subcase) will be quite different from Theorem~\ref{thm:lasserweak}, Theorem~\ref{thm:wamnec} and Theorem~\ref{thm:wamsuff}. We will use Theorem~\ref{thm:lasserpoint}, chain sequences, asymptotic behavior, appropriate transformations and Tur\'{a}n's inequality for a more convenient system of orthogonal polynomials. The latter will be introduced in the next section.

\section{Transformation into random walk polynomials}\label{sec:rw}

The proofs of our main results Theorem~\ref{thm:assocpollnonneg} and Theorem~\ref{thm:pwamassocpoll} below will be done via systems whose recurrence relations and asymptotic behavior are more accessible. For $a>1$, $b>0$ and $\nu\geq0$, let 
\begin{align*} \widetilde{c_n^s}&:=\begin{cases} 0, & n=0, \\ \frac{n+\nu}{(a+1)(n+\nu)+b}, & n\in\mathbb{N}, \end{cases}\\
\widetilde{a_n^s}&\equiv1-\widetilde{c_n^s},\\
c_n^s&:=\begin{cases} 0, & n=0, \\ \widetilde{c_1^s}\frac{a\nu+b}{(a+1)\nu+b}, & n=1, \\ \frac{\widetilde{c_n^s}\widetilde{a_{n-1}^s}}{1-c_{n-1}^s}, & \mbox{else}, \end{cases}\\
a_n^s&\equiv1-c_n^s.
\end{align*}
Note that $(c_n^s)_{n\in\mathbb{N}_0}$ is well-defined, that
\begin{equation*}
(c_n^s)_{n\in\mathbb{N}}\subseteq\left(0,\frac{1}{a+1}\right)
\end{equation*}
and that
\begin{equation*}
c_n^s\leq\widetilde{c_n^s}\;(n\in\mathbb{N}_0);
\end{equation*}
this can be seen as follows: since the cases $n=0$ and $n=1$ are clear, let $n\in\mathbb{N}$ be arbitrary but fixed and assume that $c_n^s$ is well-defined, $0<c_n^s<1/(a+1)$ and $c_n^s\leq\widetilde{c_n^s}$. Then $c_{n+1}^s=\widetilde{c_{n+1}^s}\widetilde{a_n^s}/(1-c_n^s)$ is well-defined, $c_{n+1}^s>0$ and
\begin{equation*}
c_{n+1}^s\leq\frac{\widetilde{c_{n+1}^s}\widetilde{a_n^s}}{1-\widetilde{c_n^s}}=\widetilde{c_{n+1}^s}<\frac{1}{a+1}.
\end{equation*}
Now let $(\widetilde{S_n^{(a,b,\nu)}}(x))_{n\in\mathbb{N}_0}\subseteq\mathbb{R}[x]$ and $(S_n^{(a,b,\nu)}(x))_{n\in\mathbb{N}_0}\subseteq\mathbb{R}[x]$ be defined via $\widetilde{S_0^{(a,b,\nu)}}(x)=S_0^{(a,b,\nu)}(x)=1$, $\widetilde{S_1^{(a,b,\nu)}}(x)=S_1^{(a,b,\nu)}(x)=x$,
\begin{align*}
x\widetilde{S_n^{(a,b,\nu)}}(x)&=\widetilde{a_n^s}\widetilde{S_{n+1}^{(a,b,\nu)}}(x)+\widetilde{c_n^s}\widetilde{S_{n-1}^{(a,b,\nu)}}(x)\;(n\in\mathbb{N}),\\
x S_n^{(a,b,\nu)}(x)&=a_n^s S_{n+1}^{(a,b,\nu)}(x)+c_n^s S_{n-1}^{(a,b,\nu)}(x)\;(n\in\mathbb{N}).
\end{align*}
Clearly, we have $\widetilde{S_n^{(a,b,\nu)}}(1)=S_n^{(a,b,\nu)}(1)=1$ for all $n\in\mathbb{N}_0$.\\

The polynomials $(\widetilde{S_n^{(a,b,0)}}(x))_{n\in\mathbb{N}_0}=(S_n^{(a,b,0)}(x))_{n\in\mathbb{N}_0}$ are the `random walk polynomials' considered in \cite{AI84,Ka15,La94}: for $\alpha>-1/2$ and $0<\lambda<\alpha+1/2$, one has
\begin{equation}\label{eq:assocpollrelationpre}
Q_n^{(\alpha,\lambda,0)}(x)=\frac{S_n^{\left(\frac{2\alpha+2\lambda+1}{2\alpha-2\lambda+1},(2\alpha+1)\frac{2\alpha+2\lambda+1}{2\alpha-2\lambda+1},0\right)}\left(\sqrt{1-\left(\frac{2\lambda}{2\alpha+1}\right)^2}x\right)}{S_n^{\left(\frac{2\alpha+2\lambda+1}{2\alpha-2\lambda+1},(2\alpha+1)\frac{2\alpha+2\lambda+1}{2\alpha-2\lambda+1},0\right)}\left(\sqrt{1-\left(\frac{2\lambda}{2\alpha+1}\right)^2}\right)}
\end{equation}
for all $n\in\mathbb{N}_0$, and the arising denominators are positive.\\

Obviously, the sequence $(\widetilde{c_n^s})_{n\in\mathbb{N}}$ is strictly increasing. We show that also $(c_n^s)_{n\in\mathbb{N}}$ is strictly increasing: for every $n\in\mathbb{N}$, we have
\begin{equation*}
c_{n+1}^s=\frac{\widetilde{c_{n+1}^s}\widetilde{a_n^s}}{1-c_n^s}>\frac{\widetilde{c_n^s}\widetilde{a_n^s}}{1-c_n^s}=\frac{\widetilde{c_n^s}(1-\widetilde{c_n^s})}{c_n^s(1-c_n^s)}c_n^s\geq c_n^s,
\end{equation*}
where the latter inequality follows because
\begin{equation*}
c_n^s\leq\widetilde{c_n^s}<\frac{1}{a+1}<\frac{1}{2}.
\end{equation*}
Since $(\widetilde{c_n^s})_{n\in\mathbb{N}},(c_n^s)_{n\in\mathbb{N}}\subseteq(0,1/2)$, Szwarc's criterion Theorem~\ref{thm:szwarcnonneg} now implies that both $(\widetilde{S_n^{(a,b,\nu)}}(x))_{n\in\mathbb{N}_0}$ and $(S_n^{(a,b,\nu)}(x))_{n\in\mathbb{N}_0}$ induce polynomial hypergroups on $\mathbb{N}_0$.\\

The corresponding monic versions $(\widetilde{\sigma_n^{(a,b,\nu)}}(x))_{n\in\mathbb{N}_0}\subseteq\mathbb{R}[x]$ and $(\sigma_n^{(a,b,\nu)}(x))_{n\in\mathbb{N}_0}\subseteq\mathbb{R}[x]$ are given by $\widetilde{\sigma_0^{(a,b,\nu)}}(x)=\sigma_0^{(a,b,\nu)}(x)=1$, $\widetilde{\sigma_1^{(a,b,\nu)}}(x)=\sigma_1^{(a,b,\nu)}(x)=x$,
\begin{align*}
x\widetilde{\sigma_n^{(a,b,\nu)}}(x)&=\widetilde{\sigma_{n+1}^{(a,b,\nu)}}(x)+\widetilde{\lambda_n}\widetilde{\sigma_{n-1}^{(a,b,\nu)}}(x)\;(n\in\mathbb{N}),\\
x\sigma_n^{(a,b,\nu)}(x)&=\sigma_{n+1}^{(a,b,\nu)}(x)+\lambda_n\sigma_{n-1}^{(a,b,\nu)}(x)\;(n\in\mathbb{N}),
\end{align*}
where $\widetilde{\lambda_n}\equiv\widetilde{c_n^s}\widetilde{a_{n-1}^s}$, $\lambda_n\equiv c_n^s a_{n-1}^s$ and consequently, by construction, $(\widetilde{\lambda_n})_{n\geq2}=(\lambda_n)_{n\geq2}$. Furthermore, observe that there is some $N\geq2$ such that
\begin{align*}
&\widetilde{\lambda_{n+1}}-\widetilde{\lambda_n}=\\
&=\lambda_{n+1}-\lambda_n=\\
&=\frac{(a-1)b(n+\nu)-a b+b^2-b}{((n+\nu-1)(a+1)+b)((n+\nu)(a+1)+b)((n+\nu+1)(a+1)+b)}>0\;(n\geq N);
\end{align*}
hence, the sequence $(\widetilde{\lambda_n})_{n\geq N}=(\lambda_n)_{n\geq N}$ is strictly increasing.\\

Let $\widetilde{\mu^s}$ and $\mu^s$ denote the corresponding orthogonalization measures, and let $\omega\in(0,1)$ be defined by
\begin{equation}\label{eq:omegadef}
\omega:=\frac{2\sqrt{a}}{a+1}.
\end{equation}

\begin{lemma}\label{lma:assocrworthmeasures}
Let $a>1$, $b>0$ and $\nu\geq0$. Then the orthogonalization measures satisfy
\begin{equation*}
\mathrm{supp}\;\widetilde{\mu^s}=\mathrm{supp}\;\mu^s=[-\omega,\omega].
\end{equation*}
\end{lemma}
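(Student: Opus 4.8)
The plan is to apply Theorem~\ref{thm:lassersupportorthmeasure} separately to each of the two systems. Both $(\widetilde{S_n^{(a,b,\nu)}}(x))_{n\in\mathbb{N}_0}$ and $(S_n^{(a,b,\nu)}(x))_{n\in\mathbb{N}_0}$ are symmetric, since their three-term recurrences have no middle coefficient (so $b_n\equiv0$ in the notation of \eqref{eq:threetermrec}, with $a_0=1$ and $b_0=0$), and we have already seen that the recurrence sequences $(\widetilde{c_n^s})_{n\in\mathbb{N}}$ and $(c_n^s)_{n\in\mathbb{N}}$ are strictly increasing and contained in $\left(0,\frac{1}{2}\right)$. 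Hence the only hypothesis of Theorem~\ref{thm:lassersupportorthmeasure} that still needs to be verified is the existence of a common limit $c\in\left(0,\frac{1}{2}\right]$; once $c$ is identified, the support of the corresponding measure is forced to equal $[-2\sqrt{c(1-c)},2\sqrt{c(1-c)}]$.

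First I would treat $\widetilde{\mu^s}$. From the explicit form $\widetilde{c_n^s}=\frac{n+\nu}{(a+1)(n+\nu)+b}$ one reads off $\lim_{n\to\infty}\widetilde{c_n^s}=\frac{1}{a+1}$, and since $a>1$ this limit lies in $\left(0,\frac{1}{2}\right)$. Applying Theorem~\ref{thm:lassersupportorthmeasure} with $c=\frac{1}{a+1}$ and using $c(1-c)=\frac{1}{a+1}\cdot\frac{a}{a+1}=\frac{a}{(a+1)^2}$ gives $2\sqrt{c(1-c)}=\frac{2\sqrt{a}}{a+1}=\omega$, hence $\mathsf{supp}\;\widetilde{\mu^s}=[-\omega,\omega]$.

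For $\mu^s$ the argument is identical once the limit of $(c_n^s)_{n\in\mathbb{N}}$ is determined. This sequence is increasing and bounded above by $\frac{1}{a+1}$ (indeed $c_n^s\leq\widetilde{c_n^s}<\frac{1}{a+1}$), hence converges to some $c\in\left(0,\frac{1}{a+1}\right]$. Passing to the limit in the defining relation $c_n^s(1-c_{n-1}^s)=\widetilde{c_n^s}\widetilde{a_{n-1}^s}$ yields $c(1-c)=\frac{a}{(a+1)^2}$, whose solutions are $\frac{1}{a+1}$ and $\frac{a}{a+1}$. The a priori bound $c\leq\frac{1}{a+1}<\frac{a}{a+1}$ (valid because $a>1$) forces $c=\frac{1}{a+1}$, so Theorem~\ref{thm:lassersupportorthmeasure} applies exactly as before and gives $\mathsf{supp}\;\mu^s=[-\omega,\omega]$ as well. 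The only step requiring genuine attention is this selection of the correct fixed point; everything else is bookkeeping, as monotonicity and all the relevant bounds were established above.
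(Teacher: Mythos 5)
Your proof is correct, and for the half concerning $\mu^s$ it takes a genuinely different route from the paper. The treatment of $\widetilde{\mu^s}$ is identical: both arguments read off $\lim_{n\to\infty}\widetilde{c_n^s}=\frac{1}{a+1}$ and invoke Theorem~\ref{thm:lassersupportorthmeasure}. For $\mu^s$, however, the paper stays inside chain-sequence theory: since $(\widetilde{\lambda_n})_{n\geq N}=(\lambda_n)_{n\geq N}$ is strictly increasing and both $(\widetilde{c_n^s})_{n\geq N-1}$ and $(c_n^s)_{n\geq N-1}$ are strictly increasing parameter sequences for it, neither can be the maximal parameter sequence (the maximal one is nonincreasing for a nondecreasing chain sequence), whence $\lim_{n\to\infty}\frac{c_n^s}{\widetilde{c_n^s}}=1$ and thus $c_n^s\to\frac{1}{a+1}$. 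You instead argue elementarily: monotone convergence gives a limit $c\in\left(0,\frac{1}{a+1}\right]$, passing to the limit in $c_n^s(1-c_{n-1}^s)=\widetilde{c_n^s}\,\widetilde{a_{n-1}^s}$ yields $c(1-c)=\frac{a}{(a+1)^2}$, and the a priori bound $c_n^s\leq\widetilde{c_n^s}<\frac{1}{a+1}<\frac{a}{a+1}$ selects the correct root of the quadratic --- all steps check out, including the root computation $c\in\left\{\frac{1}{a+1},\frac{a}{a+1}\right\}$ for $a>1$. Your argument is shorter and self-contained, avoiding the maximal/minimal parameter-sequence machinery entirely. What the paper's route buys in exchange is a byproduct that is reused later: the fact that neither $(\widetilde{c_n^s})_{n\geq N-1}$ nor $(c_n^s)_{n\geq N-1}$ is maximal is cited again in Step~1 of the proof of Lemma~\ref{lma:assymptoticrelation} to get absolute convergence of the infinite products $\prod_{n=N}^{\infty}\frac{m_n}{\widetilde{c_n^s}}$ and $\prod_{n=N}^{\infty}\frac{m_n}{c_n^s}$; with your proof of this lemma, that non-maximality would have to be established separately there (though it follows by the same one-line monotonicity observation).
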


\begin{proof}
Since obviously $\lim_{n\to\infty}\widetilde{c_n^s}=1/(a+1)$, we obtain that $\mathrm{supp}\;\widetilde{\mu^s}=[-\omega,\omega]$ from Theorem~\ref{thm:lassersupportorthmeasure}. Let $N\geq2$ be as above. Since the sequence $(\widetilde{\lambda_n})_{n\geq N}=(\lambda_n)_{n\geq N}$ is strictly increasing, and since both $(\widetilde{c_n^s})_{n\geq N-1}$ and $(c_n^s)_{n\geq N-1}$ are strictly increasing, neither $(\widetilde{c_n^s})_{n\geq N-1}$ nor $(c_n^s)_{n\geq N-1}$ is the maximal parameter sequence for $(\widetilde{\lambda_n})_{n\geq N}=(\lambda_n)_{n\geq N}$, so $\lim_{n\to\infty}c_n^s/\widetilde{c_n^s}=1$. The latter yields $\lim_{n\to\infty}c_n^s=1/(a+1)$, and we can apply Theorem~\ref{thm:lassersupportorthmeasure} again to obtain that also $\mathrm{supp}\;\mu^s=[-\omega,\omega]$.
\end{proof}
As an immediate consequence of Lemma~\ref{lma:assocrworthmeasures}, we obtain that both $\widetilde{S_n^{(a,b,\nu)}}(\omega)>0$ and $S_n^{(a,b,\nu)}(\omega)>0$ for all $n\in\mathbb{N}_0$, as well as $\widetilde{\sigma_n^{(a,b,\nu)}}(\omega)>0$ and $\sigma_n^{(a,b,\nu)}(\omega)>0$ for all $n\in\mathbb{N}_0$.\\

Our next lemma provides a relation to the associated symmetric Pollaczek polynomials. It is a direct generalization of \eqref{eq:assocpollrelationpre}. We will use that the monic versions $(q_n^{(\alpha,\lambda,\nu)}(x))_{n\in\mathbb{N}_0}\subseteq\mathbb{R}[x]$ which correspond to $(Q_n^{(\alpha,\lambda,\nu)}(x))_{n\in\mathbb{N}_0}$ ($\alpha>-1/2$, $\lambda\geq0$, $\nu\geq0$) are given by $q_0^{(\alpha,\lambda,\nu)}(x)=1$, $q_1^{(\alpha,\lambda,\nu)}(x)=x$ and
\begin{equation}\label{eq:assocpollmonic}
\begin{split}
x q_n^{(\alpha,\lambda,\nu)}(x)&=q_{n+1}^{(\alpha,\lambda,\nu)}(x)\\
&\quad+\frac{(n+\nu)(n+\nu+2\alpha)}{(2n+2\nu+2\alpha+2\lambda+1)(2n+2\nu+2\alpha+2\lambda-1)}q_{n-1}^{(\alpha,\lambda,\nu)}(x)\;(n\in\mathbb{N}),
\end{split}
\end{equation}
see \cite{La94} or \eqref{eq:assocpollrec}.

\begin{lemma}\label{lma:assocpollrelation}
Let $\alpha>-1/2$, $0<\lambda<\alpha+1/2$ and $\nu\geq0$. Then
\begin{equation*}
Q_n^{(\alpha,\lambda,\nu)}(x)=\frac{S_n^{\left(\frac{2\alpha+2\lambda+1}{2\alpha-2\lambda+1},(2\alpha+1)\frac{2\alpha+2\lambda+1}{2\alpha-2\lambda+1},\nu\right)}\left(\sqrt{1-\left(\frac{2\lambda}{2\alpha+1}\right)^2}x\right)}{S_n^{\left(\frac{2\alpha+2\lambda+1}{2\alpha-2\lambda+1},(2\alpha+1)\frac{2\alpha+2\lambda+1}{2\alpha-2\lambda+1},\nu\right)}\left(\sqrt{1-\left(\frac{2\lambda}{2\alpha+1}\right)^2}\right)}
\end{equation*}
for all $n\in\mathbb{N}_0$, and the arising denominators are positive.
\end{lemma}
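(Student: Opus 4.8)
The plan is to prove the identity at the level of monic polynomials, where the scaling $x\mapsto\omega x$ acts transparently on the recurrence coefficients. First I would record that the hypothesis $0<\lambda<\alpha+\frac{1}{2}$ guarantees $a:=\frac{2\alpha+2\lambda+1}{2\alpha-2\lambda+1}>1$ and $b:=(2\alpha+1)\frac{2\alpha+2\lambda+1}{2\alpha-2\lambda+1}>0$, so that the systems $(\widetilde{S_n^{(a,b,\nu)}}(x))_{n\in\mathbb{N}_0}$ and $(S_n^{(a,b,\nu)}(x))_{n\in\mathbb{N}_0}$ are indeed defined. A short computation shows that the number $\sqrt{1-\left(\frac{2\lambda}{2\alpha+1}\right)^2}$ appearing in the statement is precisely $\omega=\frac{2\sqrt{a}}{a+1}$; hence the denominators are the values $S_n^{(a,b,\nu)}(\omega)$, which are positive by the remark following Lemma~\ref{lma:assocrworthmeasures}, and the right-hand side is well-defined.

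Next I would pass to monic versions. Writing $\mu_n$ for the coefficients appearing in \eqref{eq:assocpollmonic}, i.e.\ in the recurrence of $q_n^{(\alpha,\lambda,\nu)}$, and recalling that $\sigma_n^{(a,b,\nu)}$ satisfies $x\sigma_n^{(a,b,\nu)}(x)=\sigma_{n+1}^{(a,b,\nu)}(x)+\lambda_n\sigma_{n-1}^{(a,b,\nu)}(x)$ with $\lambda_n=c_n^s a_{n-1}^s$, one checks that the rescaled polynomials $\omega^{-n}\sigma_n^{(a,b,\nu)}(\omega x)$ are again monic and obey the three-term recurrence with coefficients $\lambda_n/\omega^2$ and the same initial data $1,x$. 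Since a monic orthogonal polynomial sequence is uniquely determined by its recurrence coefficients, it suffices to verify the single family of identities
$$\frac{\lambda_n}{\omega^2}=\mu_n\qquad(n\in\mathbb{N}).$$
Then $\omega^{-n}\sigma_n^{(a,b,\nu)}(\omega x)=q_n^{(\alpha,\lambda,\nu)}(x)$, and dividing by the value at $x=1$ (which turns $\sigma_n^{(a,b,\nu)}$ into $S_n^{(a,b,\nu)}$ and $q_n^{(\alpha,\lambda,\nu)}$ into $Q_n^{(\alpha,\lambda,\nu)}$ while cancelling the factor $\omega^{-n}$) yields exactly the asserted relation.

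To establish $\lambda_n/\omega^2=\mu_n$ I would distinguish two regimes. For $n\geq2$ we have $\lambda_n=\widetilde{\lambda_n}=\widetilde{c_n^s}(1-\widetilde{c_{n-1}^s})$ by the construction recalled before the lemma, so everything is explicit in terms of $\widetilde{c_n^s}=\frac{n+\nu}{(a+1)(n+\nu)+b}$; substituting the above values of $a$, $b$ and $\omega^2=\frac{(2\alpha+1)^2-(2\lambda)^2}{(2\alpha+1)^2}$ makes the factors $2\alpha-2\lambda+1$ and $2\alpha+2\lambda+1$ cancel and reproduces $\mu_n$. The case $n=1$ must be treated separately, because there $\lambda_1=c_1^s$ with the modified value $c_1^s=\widetilde{c_1^s}\frac{a\nu+b}{(a+1)\nu+b}$; the extra factor is precisely what is needed for $\lambda_1/\omega^2=\mu_1$ to hold. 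I expect this $n=1$ identity to be the main (if still purely computational) obstacle: the correction factor in the definition of $c_1^s$ was built in exactly to repair the bottom of the recurrence, and verifying that it does so is the one nonautomatic point; once it is checked, the generalization of \eqref{eq:assocpollrelationpre} from $\nu=0$ to general $\nu\geq0$ follows as above.
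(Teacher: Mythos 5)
Your proposal is correct and follows essentially the same route as the paper's proof: both pass to the monic versions, observe that $\omega^{-n}\sigma_n^{(a,b,\nu)}(\omega x)$ satisfies the recurrence \eqref{eq:assocpollmonic} (checking $\lambda_n=\omega^2\mu_n$ with the $n=1$ case handled by the modified coefficient $c_1^s$, exactly as you anticipate), and then renormalize at $x=1$, with positivity of the denominators coming from the remark after Lemma~\ref{lma:assocrworthmeasures}. The computational details you defer do work out as you predict, so there is no gap.
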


\begin{proof}
Let
\begin{equation*}
a:=\frac{2\alpha+2\lambda+1}{2\alpha-2\lambda+1},\;b:=(2\alpha+1)\frac{2\alpha+2\lambda+1}{2\alpha-2\lambda+1}.
\end{equation*}
Then
\begin{equation*}
\omega=\sqrt{1-\left(\frac{2\lambda}{2\alpha+1}\right)^2},
\end{equation*}
and $(\sigma_n^{(a,b,\nu)}(x))_{n\in\mathbb{N}_0}$ satisfies $\sigma_0^{(a,b,\nu)}(x)=1$, $\sigma_1^{(a,b,\nu)}(x)=x$,
\begin{equation*}
x\sigma_n^{(a,b,\nu)}(x)=\sigma_{n+1}^{(a,b,\nu)}(x)+\lambda_n\sigma_{n-1}^{(a,b,\nu)}(x)\;(n\in\mathbb{N})
\end{equation*}
with
\begin{align*}
\lambda_1&=\frac{1+\nu}{\left(\frac{2\alpha+2\lambda+1}{2\alpha-2\lambda+1}+1\right)(1+\nu)+(2\alpha+1)\frac{2\alpha+2\lambda+1}{2\alpha-2\lambda+1}}\frac{\frac{2\alpha+2\lambda+1}{2\alpha-2\lambda+1}\nu+(2\alpha+1)\frac{2\alpha+2\lambda+1}{2\alpha-2\lambda+1}}{\left(\frac{2\alpha+2\lambda+1}{2\alpha-2\lambda+1}+1\right)\nu+(2\alpha+1)\frac{2\alpha+2\lambda+1}{2\alpha-2\lambda+1}}=\\
&=\omega^2\frac{(1+\nu)(1+\nu+2\alpha)}{(3+2\nu+2\alpha+2\lambda)(1+2\nu+2\alpha+2\lambda)}
\end{align*}
and, for all $n\geq2$,
\begin{align*}
\lambda_n&=\frac{n+\nu}{\left(\frac{2\alpha+2\lambda+1}{2\alpha-2\lambda+1}+1\right)(n+\nu)+(2\alpha+1)\frac{2\alpha+2\lambda+1}{2\alpha-2\lambda+1}}\\
&\quad\times\frac{\frac{2\alpha+2\lambda+1}{2\alpha-2\lambda+1}(n+\nu-1)+(2\alpha+1)\frac{2\alpha+2\lambda+1}{2\alpha-2\lambda+1}}{\left(\frac{2\alpha+2\lambda+1}{2\alpha-2\lambda+1}+1\right)(n+\nu-1)+(2\alpha+1)\frac{2\alpha+2\lambda+1}{2\alpha-2\lambda+1}}=\\
&=\omega^2\frac{(n+\nu)(n+\nu+2\alpha)}{(2n+2\nu+2\alpha+2\lambda+1)(2n+2\nu+2\alpha+2\lambda-1)}.
\end{align*}
In conclusion, we have
\begin{equation*}
x\sigma_n^{(a,b,\nu)}(x)=\sigma_{n+1}^{(a,b,\nu)}(x)+\omega^2\frac{(n+\nu)(n+\nu+2\alpha)}{(2n+2\nu+2\alpha+2\lambda+1)(2n+2\nu+2\alpha+2\lambda-1)}\sigma_{n-1}^{(a,b,\nu)}(x)
\end{equation*}
and consequently
\begin{equation*}
x\frac{\sigma_n^{(a,b,\nu)}(\omega x)}{\omega^n}=\frac{\sigma_{n+1}^{(a,b,\nu)}(\omega x)}{\omega^{n+1}}+\frac{(n+\nu)(n+\nu+2\alpha)}{(2n+2\nu+2\alpha+2\lambda+1)(2n+2\nu+2\alpha+2\lambda-1)}\frac{\sigma_{n-1}^{(a,b,\nu)}(\omega x)}{\omega^{n-1}}
\end{equation*}
for all $n\in\mathbb{N}$. Therefore, we obtain
\begin{equation*}
q_n^{(\alpha,\lambda,\nu)}(x)=\frac{\sigma_n^{(a,b,\nu)}(\omega x)}{\omega^n}
\end{equation*}
as a consequence of \eqref{eq:assocpollmonic}. This implies the assertion.
\end{proof}

\section{Monotonicity of the recurrence coefficients, nonnegative linearization and Tur\'{a}n's inequality}\label{sec:Turan}

The following theorem is our first main result.

\begin{theorem}\label{thm:assocpollnonneg}
Let $\alpha>-1/2$, $\lambda\geq0$ and $\nu\geq0$ be arbitrary, and let $P_n(x)=Q_n^{(\alpha,\lambda,\nu)}(x)\;(n\in\mathbb{N}_0)$. Then $(c_n)_{n\in\mathbb{N}}$ is strictly increasing, and $(P_n(x))_{n\in\mathbb{N}_0}$ satisfies both the nonnegative linearization property (P) and Tur\'{a}n's inequality.
\end{theorem}

As a consequence of Theorem~\ref{thm:assocpollnonneg}, we will obtain a bound for the recurrence coefficients in terms of the function $\phi:[1,\infty)\rightarrow(0,1)$,
\begin{equation}\label{eq:phidef}
\phi(x):=\frac{(x+\nu)(x+\nu+2\alpha)}{(2x+2\nu+2\alpha+2\lambda+1)(2x+2\nu+2\alpha+2\lambda-1)}
\end{equation}
($\alpha>-1/2$, $\lambda,\nu\geq0$). With this notation, \eqref{eq:assocpollrec} reads
\begin{equation}\label{eq:assocpollrecmod}
c_n=\frac{\phi(n)}{1-c_{n-1}}\;(n\in\mathbb{N}).
\end{equation}
As a consequence of \eqref{eq:assocpollrecmod} and $(c_n)_{n\in\mathbb{N}}\subseteq(0,1)$, we see that $\phi$ indeed maps into the interval $(0,1)$; alternatively, one can see this by rewriting \eqref{eq:phidef} as
\begin{equation*}
\phi(x)=1-\frac{(2\alpha+1)(3x+3\nu+2\alpha+4\lambda-1)+4\lambda(2x+2\nu+\lambda-1)+(3x+3\nu)(x+\nu-1)}{(2x+2\nu+2\alpha+2\lambda+1)(2x+2\nu+2\alpha+2\lambda-1)}.
\end{equation*}

\begin{corollary}\label{cor:assocpollnonneg}
Let $\alpha>-1/2$, $\lambda\geq0$ and $\nu\geq0$, and let $P_n(x)=Q_n^{(\alpha,\lambda,\nu)}(x)\;(n\in\mathbb{N}_0)$. Then
\begin{equation*}
c_n<\frac{1}{2}(1-\sqrt{\max\{0,1-4\phi(n+1)\}})
\end{equation*}
for all $n\in\mathbb{N}$.
\end{corollary}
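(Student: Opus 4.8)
The plan is to work entirely from the recurrence \eqref{eq:assocpollrec}, which I read as $c_n=\phi(n)/(1-c_{n-1})$ for $n\in\mathbb{N}$ (with $c_0=0$), together with the strict monotonicity of $(c_n)_{n\in\mathbb{N}_0}$ furnished by Theorem~\ref{thm:assocpollnonneg}; notably, no monotonicity or sign information about $\phi$ itself is required. First I would record the elementary limit $\lim_{n\to\infty}\phi(n)=\tfrac14$, which is immediate from \eqref{eq:phidef} because the numerator and the denominator are quadratic in $n$ with leading coefficients $1$ and $4$, respectively. Since $(c_n)_{n\in\mathbb{N}_0}\subseteq(0,1)$ is strictly increasing, it converges to some $L\in(0,1]$; letting $n\to\infty$ in \eqref{eq:assocpollrec} yields $L(1-L)=\tfrac14$, hence $L=\tfrac12$. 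Being strictly increasing with limit $\tfrac12$, the sequence then satisfies $c_n<\tfrac12$ for every $n\in\mathbb{N}_0$.

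The central step is to produce the shifted inequality $c_n(1-c_n)<\phi(n+1)$. For $n\in\mathbb{N}_0$ I would apply \eqref{eq:assocpollrec} at index $n+1$, which gives $\phi(n+1)=c_{n+1}(1-c_n)$. Strict monotonicity yields $c_{n+1}>c_n$, and $c_n<1$ gives $1-c_n>0$, so $\phi(n+1)=c_{n+1}(1-c_n)>c_n(1-c_n)$; for $n=0$ this reads $\phi(1)>0=c_0(1-c_0)$, which holds as well. Thus $c_n(1-c_n)<\phi(n+1)$ for all $n\in\mathbb{N}_0$.

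To conclude, abbreviate $y_n:=\tfrac12\bigl(1-\sqrt{\max\{0,1-4\phi(n+1)\}}\bigr)$, the quantity in the statement, and note $y_n\le\tfrac12$. If $\phi(n+1)\ge\tfrac14$, then $y_n=\tfrac12$ and the assertion $c_n<y_n$ is precisely $c_n<\tfrac12$, already obtained. If $\phi(n+1)<\tfrac14$, then $y_n$ is the smaller root of $y(1-y)=\phi(n+1)$, so $y_n(1-y_n)=\phi(n+1)$ and $y_n\in[0,\tfrac12)$; combining this with the central inequality gives $c_n(1-c_n)<\phi(n+1)=y_n(1-y_n)$, and since $c_n,y_n\in[0,\tfrac12)$ while $y\mapsto y(1-y)$ is strictly increasing on $[0,\tfrac12]$, this forces $c_n<y_n$, as claimed.

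The only genuinely non-routine points are recognizing that the strictly increasing character of $(c_n)$ combined with \eqref{eq:assocpollrec} taken \emph{one step ahead} is exactly what produces the shifted argument $\phi(n+1)$, and that the bound $c_n<\tfrac12$ has to be secured globally through the limit $c_n\to\tfrac12$ rather than termwise; once these two observations are in place, the case distinction and the monotonicity of the parabola make the rest automatic.
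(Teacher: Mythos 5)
Your proposal is correct and is essentially the paper's own argument: your central inequality $c_n(1-c_n)<\phi(n+1)$, obtained from the recurrence at index $n+1$ together with the strict monotonicity from Theorem~\ref{thm:assocpollnonneg}, is exactly the paper's identity $(c_{n+1}-c_n)(1-c_n)=\left(c_n-\frac{1}{2}\right)^2+\phi(n+1)-\frac{1}{4}$ rearranged, and the bound $c_n<\frac{1}{2}$ that you secure via $c_n\nearrow\frac{1}{2}$ is the same fact the paper invokes (having noted in Section~\ref{sec:mainresults} that $(c_n)_{n\in\mathbb{N}}$ converges to $\frac{1}{2}$ as a consequence of Theorem~\ref{thm:assocpollnonneg} and \eqref{eq:assocpollrec}). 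The only difference is presentational: you extract $c_n<y_n$ via monotonicity of $y\mapsto y(1-y)$ on $\left[0,\frac{1}{2}\right]$, while the paper reads the same conclusion off the squared term in its identity.
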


Further bounds will be obtained in Lemma~\ref{lma:cest}.\\

Theorem~\ref{thm:assocpollnonneg} shows that $(Q_n^{(\alpha,\lambda,\nu)}(x))_{n\in\mathbb{N}_0}$ always induces a polynomial hypergroup on $\mathbb{N}_0$. The Haar weights are of subexponential growth and given by
\begin{equation*}
h(n)=\frac{(2n+2\nu+2\alpha+2\lambda+1)(\nu+1)_n}{(2\alpha+2\lambda+2\nu+1)(2\alpha+\nu+1)_n}(L_n^{(2\alpha,\nu)}(-2\lambda))^2\;(n\in\mathbb{N}_0),
\end{equation*}
cf. \cite{La94} ($(.)_n$ denotes the Pochhammer symbol). As a consequence of Theorem~\ref{thm:lassersupportorthmeasure}, we have $\mathcal{X}^b(\mathbb{N}_0)=\widehat{\mathbb{N}_0}=\mathrm{supp}\;\mu=[-1,1]$: as soon as one knows that $(c_n)_{n\in\mathbb{N}}$ is strictly increasing (or at least nondecreasing), then, due to \eqref{eq:assocpollrecmod} and the limiting behavior of $\phi$, it is clear that $(c_n)_{n\in\mathbb{N}}$ converges to $1/2$, so Theorem~\ref{thm:lassersupportorthmeasure} can be applied. We also observe that, in view of these considerations, the conditions of Szwarc's criteria Theorem~\ref{thm:szwarcnonneg} and Theorem~\ref{thm:szwarcturan} are verified as soon as one knows that $(c_n)_{n\in\mathbb{N}}$ is strictly increasing. We will also need the derivative of $\phi$, which, for all $x\in[1,\infty)$, is given by
\begin{equation*}
\phi^{\prime}(x)=\begin{cases} \frac{(2\alpha+1)(2\alpha-1)(2x+2\nu+2\alpha)}{(2x+2\nu+2\alpha+1)^2(2x+2\nu+2\alpha-1)^2}, & \lambda=0, \\ \frac{\eta(x)}{8\lambda(2x+2\nu+2\alpha+2\lambda+1)^2(2x+2\nu+2\alpha+2\lambda-1)^2}, & \lambda>0, \end{cases}
\end{equation*}
where we define $\eta:\mathbb{R}\rightarrow\mathbb{R}$,
\begin{equation}\label{eq:etadef}
\eta(x)=(8\lambda(x+\nu)+(2\alpha+2\lambda)^2-1)^2-(1-(2\alpha-2\lambda)^2)(1-(2\alpha+2\lambda)^2).
\end{equation}
Concerning Theorem~\ref{thm:assocpollnonneg}, we preliminarily note that the proof below will show that if $\lambda>0$ and $\lambda\geq-|\alpha|+1/2$, then $\phi$ is nondecreasing (so $(c_n)_{n\in\mathbb{N}_0}$ is strictly increasing as minimal parameter sequence of $(\phi(n))_{n\in\mathbb{N}}$). However, if $0<\lambda<-|\alpha|+1/2$, then the behavior of $\phi$ can be much less convenient. Instead, it would be a natural try to proceed as follows: the representation \eqref{eq:cassocpoll} gives
\begin{align*}
\frac{c_{n+1}}{c_n}&=\frac{\frac{n+\nu+2\alpha+1}{2n+2\nu+2\alpha+2\lambda+3}\frac{L_n^{(2\alpha,\nu)}(-2\lambda)}{L_{n+1}^{(2\alpha,\nu)}(-2\lambda)}}{\frac{n+\nu+2\alpha}{2n+2\nu+2\alpha+2\lambda+1}\frac{L_{n-1}^{(2\alpha,\nu)}(-2\lambda)}{L_n^{(2\alpha,\nu)}(-2\lambda)}}=\\
&=\frac{(n+\nu+2\alpha+1)(2n+2\nu+2\alpha+2\lambda+1)}{(n+\nu+2\alpha)(2n+2\nu+2\alpha+2\lambda+3)}\frac{(L_n^{(2\alpha,\nu)}(-2\lambda))^2}{L_{n+1}^{(2\alpha,\nu)}(-2\lambda)L_{n-1}^{(2\alpha,\nu)}(-2\lambda)}
\end{align*}
for all $n\in\mathbb{N}$. Now if $0<\lambda<-|\alpha|+1/2$ (and consequently $-2\alpha+2\lambda+1>0$), one has
\begin{equation*}
\frac{(n+\nu+2\alpha+1)(2n+2\nu+2\alpha+2\lambda+1)}{(n+\nu+2\alpha)(2n+2\nu+2\alpha+2\lambda+3)}=1+\frac{-2\alpha+2\lambda+1}{(n+\nu+2\alpha)(2n+2\nu+2\alpha+2\lambda+3)}>1.
\end{equation*}
Therefore, if there was a Tur\'{a}n type inequality for the associated Laguerre polynomials $(L_n^{(2\alpha,\nu)}(x))_{n\in\mathbb{N}_0}$, then we would easily obtain that $(c_n)_{n\in\mathbb{N}}$ is strictly increasing. However, one has
\begin{equation*}
\left(L_1^{\left(-\frac{1}{4},0\right)}\left(-\frac{1}{4}\right)\right)^2-L_2^{\left(-\frac{1}{4},0\right)}\left(-\frac{1}{4}\right)L_0^{\left(-\frac{1}{4},0\right)}\left(-\frac{1}{4}\right)=-\frac{1}{8}<0,
\end{equation*}
which shows that the approach does not work for $(\alpha,\lambda,\nu)=(-1/8,1/8,0)$, for instance. Another natural idea would be to use a more appropriate normalization and, more precisely, to modify the approach in such a way that one obtains differences of the form
\begin{equation*}
\left(\frac{L_n^{(2\alpha,\nu)}(-2\lambda)}{L_n^{(2\alpha,\nu)}(0)}\right)^2-\frac{L_{n+1}^{(2\alpha,\nu)}(-2\lambda)}{L_{n+1}^{(2\alpha,\nu)}(0)}\frac{L_{n-1}^{(2\alpha,\nu)}(-2\lambda)}{L_{n-1}^{(2\alpha,\nu)}(0)}.
\end{equation*}
In fact, this modified approach works for the special case $\nu=0$ (symmetric Pollaczek polynomials):\footnote{For better reading, in the following we use an additional superscript ``$(\nu=0)$'' when referring to the special case $\nu=0$.} it is immediate from \eqref{eq:Laguerrerec} that
\begin{equation}\label{eq:Laguerrezeroquot}
\frac{L_n^{(2\alpha,0)}(0)}{L_{n-1}^{(2\alpha,0)}(0)}=\frac{n+2\alpha}{n}\;(n\in\mathbb{N}),
\end{equation}
and \eqref{eq:cassocpoll} and \eqref{eq:Laguerrezeroquot} imply that
\begin{equation*}
\frac{c_{n+1}^{(\nu=0)}}{c_n^{(\nu=0)}}=\frac{\frac{n+2\alpha+1}{2n+2\alpha+2\lambda+3}\frac{L_n^{(2\alpha,0)}(-2\lambda)}{L_{n+1}^{(2\alpha,0)}(-2\lambda)}}{\frac{n+2\alpha}{2n+2\alpha+2\lambda+1}\frac{L_{n-1}^{(2\alpha,0)}(-2\lambda)}{L_n^{(2\alpha,0)}(-2\lambda)}}=\frac{(n+1)(2n+2\alpha+2\lambda+1)}{n(2n+2\alpha+2\lambda+3)}\frac{\left(\frac{L_n^{(2\alpha,0)}(-2\lambda)}{L_n^{(2\alpha,0)}(0)}\right)^2}{\frac{L_{n+1}^{(2\alpha,0)}(-2\lambda)}{L_{n+1}^{(2\alpha,0)}(0)}\frac{L_{n-1}^{(2\alpha,0)}(-2\lambda)}{L_{n-1}^{(2\alpha,0)}(0)}}
\end{equation*}
for all $n\in\mathbb{N}$. Now Tur\'{a}n's inequality for the (renormalized) Laguerre polynomials $\left(\frac{L_n^{(2\alpha,0)}(x)}{L_n^{(2\alpha,0)}(0)}\right)_{n\in\mathbb{N}_0}$ \cite{MN51,Sz98,TN51} states that
\begin{align}
\label{eq:LaguerreTuranfirst} \left(\frac{L_n^{(2\alpha,0)}(x)}{L_n^{(2\alpha,0)}(0)}\right)^2-\frac{L_{n+1}^{(2\alpha,0)}(x)}{L_{n+1}^{(2\alpha,0)}(0)}\frac{L_{n-1}^{(2\alpha,0)}(x)}{L_{n-1}^{(2\alpha,0)}(0)}&\geq0\;(x\in\mathbb{R}),\\
\label{eq:LaguerreTuransecond} \left(\frac{L_n^{(2\alpha,0)}(x)}{L_n^{(2\alpha,0)}(0)}\right)^2-\frac{L_{n+1}^{(2\alpha,0)}(x)}{L_{n+1}^{(2\alpha,0)}(0)}\frac{L_{n-1}^{(2\alpha,0)}(x)}{L_{n-1}^{(2\alpha,0)}(0)}&>0\;(x\in\mathbb{R}\backslash\{0\})
\end{align}
for all $n\in\mathbb{N}$. Hence, we have
\begin{equation}\label{eq:estimationforappendix}
\frac{c_{n+1}^{(\nu=0)}}{c_n^{(\nu=0)}}\begin{cases} >\frac{(n+1)(2n+2\alpha+2\lambda+1)}{n(2n+2\alpha+2\lambda+3)}, & \lambda>0, \\ =\frac{(n+1)(2n+2\alpha+1)}{n(2n+2\alpha+3)}, & \lambda=0. \end{cases}
\end{equation}
Since
\begin{equation*}
\frac{(n+1)(2n+2\alpha+2\lambda+1)}{n(2n+2\alpha+2\lambda+3)}=1+\frac{2\alpha+2\lambda+1}{n(2n+2\alpha+2\lambda+3)}>1,
\end{equation*}
we obtain that $(c_n^{(\nu=0)})_{n\in\mathbb{N}}$ is strictly increasing (observe that this argument works for all $\alpha>-1/2$, $\lambda\geq0$).\\

Concerning the question whether the preceding argument generalizes to the full case $0<\lambda<-|\alpha|+1/2$ (with arbitrary $\nu\geq0$), we mention that there is an analogue to equation \eqref{eq:Laguerrezeroquot}: for every $n\in\mathbb{N}$, one has
\begin{equation*}
\frac{L_n^{(2\alpha,\nu)}(0)}{L_{n-1}^{(2\alpha,\nu)}(0)}=\begin{cases} \frac{1}{n+\nu}\frac{(2\alpha+\nu)_{n+1}-(\nu)_{n+1}}{(2\alpha+\nu)_n-(\nu)_n}, & \alpha\neq0, \\ \frac{\sum_{k=0}^n\frac{1}{k+\nu}}{\sum_{k=0}^{n-1}\frac{1}{k+\nu}}, & \alpha=0,\;\nu>0, \\ 1, & \alpha=\nu=0 \end{cases}
\end{equation*}
\cite{AW84}. However, a generalization fails already for the reason that Tur\'{a}n's inequality \eqref{eq:LaguerreTuranfirst}, \eqref{eq:LaguerreTuransecond} does not generalize: for instance, one has
\begin{equation*}
\left(\frac{L_1^{(0,1+\sqrt{2})}\left(-\frac{1}{2}\right)}{L_1^{(0,1+\sqrt{2})}(0)}\right)^2-\frac{L_2^{(0,1+\sqrt{2})}\left(-\frac{1}{2}\right)}{L_2^{(0,1+\sqrt{2})}(0)}\frac{L_0^{(0,1+\sqrt{2})}\left(-\frac{1}{2}\right)}{L_0^{(0,1+\sqrt{2})}(0)}=-\frac{1}{(3+2\sqrt{2})^2}<0.
\end{equation*}
Nevertheless, we found two very different ways how the full case $0<\lambda<-|\alpha|+1/2$ can be successfully tackled via Tur\'{a}n type inequalities which are valid for suitable related classes of orthogonal polynomials:
\begin{itemize}
\item The faster way will be based on Theorem~\ref{thm:szwarcturan}, Lemma~\ref{lma:assocpollrelation} and Tur\'{a}n's inequality for the sequence $(S_n^{(a,b,\nu)}(x))_{n\in\mathbb{N}_0}$ (see Section~\ref{sec:rw}) with
\begin{equation*}
a:=\frac{2\alpha+2\lambda+1}{2\alpha-2\lambda+1},\;b:=(2\alpha+1)\frac{2\alpha+2\lambda+1}{2\alpha-2\lambda+1}.
\end{equation*}
It avoids an analysis of $\phi$ for the case $0<\lambda<-|\alpha|+1/2$, and it works in the larger region $0<\lambda<\alpha+1/2$. Moreover, if one is just interested in property (P) (and not in the monotonicity of the recurrence coefficients $(c_n)_{n\in\mathbb{N}}$, Tur\'{a}n's inequality for the associated symmetric Pollaczek polynomials and Corollary~\ref{cor:assocpollnonneg}), then Lemma~\ref{lma:assocpollrelation} provides a solution without any use of Tur\'{a}n type inequalities. The details will be given in the proof of Theorem~\ref{thm:assocpollnonneg} below.
\item The second way makes use of the special case $\nu=0$ and equation \eqref{eq:estimationforappendix} above (which was obtained via Tur\'{a}n's inequality for Laguerre polynomials \eqref{eq:LaguerreTuranfirst}, \eqref{eq:LaguerreTuransecond}). In contrast to the first, shorter way, it considers the behavior of $\phi$ for the case $0<\lambda<-|\alpha|+1/2$. Justified by the more classical character, and since it is of interest how the problem can be solved via Tur\'{a}n type inequalities in two very different ways, this second way will be presented in an appendix. Furthermore, in Lemma~\ref{lma:cest} we will obtain some estimations for the recurrence coefficients which may be helpful for other problems.
\end{itemize}

\begin{proof}[Proof (Theorem~\ref{thm:assocpollnonneg})]
We only have to show that $(c_n)_{n\in\mathbb{N}}$ is strictly increasing, which implies property (P) and Tur\'{a}n's inequality as consequences of Szwarc's criteria Theorem~\ref{thm:szwarcnonneg} and Theorem~\ref{thm:szwarcturan} (cf. above). If $\lambda=0$ and $\alpha\geq1/2$, then $\phi^{\prime}(x)\geq0$ for all $x\in[1,\infty)$, so $\phi$ is nondecreasing. If $\lambda=0$ and $\alpha<1/2$, then
\begin{equation*}
\phi(x)-\frac{1}{4}=\frac{(1-2\alpha)(1+2\alpha)}{4(2x+2\nu+2\alpha+1)(2x+2\nu+2\alpha-1)}>0
\end{equation*}
for all $x\in[1,\infty)$. Hence, in both cases we get that $(c_n)_{n\in\mathbb{N}}$ is strictly increasing by applying the theory of chain sequences as recalled in Section~\ref{sec:preliminaries}.\\

From now on, let $\lambda>0$.\\

Let $\lambda\geq|\alpha|+1/2$. If $\alpha\geq0$, then $\lambda\geq1/2$ and consequently $2\alpha+2\lambda\geq1$, so
\begin{align*}
\eta(x)&\geq(8\lambda+(2\alpha+2\lambda)^2-1)^2-(1-(2\alpha-2\lambda)^2)(1-(2\alpha+2\lambda)^2)=\\
&=16\lambda((4\alpha+4)\lambda^2+(8\alpha^2+8\alpha+4)\lambda+4\alpha^3+4\alpha^2-\alpha-1)\geq\\
&\geq16\lambda\left((4\alpha+4)\left(\alpha+\frac{1}{2}\right)^2+(8\alpha^2+8\alpha+4)\left(\alpha+\frac{1}{2}\right)+4\alpha^3+4\alpha^2-\alpha-1\right)=\\
&=32(2\alpha+1)^3\lambda>\\
&>0
\end{align*}
for all $x\in[1,\infty)$, so $\phi$ is strictly increasing. If $\alpha<0$, then $2\alpha+2\lambda\geq1$ again and we obtain
\begin{align*}
\eta(x)&\geq(8\lambda+(2\alpha+2\lambda)^2-1)^2-(1-(2\alpha-2\lambda)^2)(1-(2\alpha+2\lambda)^2)=\\
&=16\lambda((4\alpha+4)\lambda^2+(8\alpha^2+8\alpha+4)\lambda+4\alpha^3+4\alpha^2-\alpha-1)\geq\\
&\geq16\lambda\left((4\alpha+4)\left(-\alpha+\frac{1}{2}\right)^2+(8\alpha^2+8\alpha+4)\left(-\alpha+\frac{1}{2}\right)+4\alpha^3+4\alpha^2-\alpha-1\right)=\\
&=32(1-2\alpha)\lambda>\\
&>0
\end{align*}
for all $x\in[1,\infty)$, so $\phi$ is strictly increasing, too.\\

Now let $-|\alpha|+1/2<\lambda<|\alpha|+1/2$. If $\alpha\geq0$, then $2\alpha+2\lambda>1$ and consequently we obtain as above
\begin{align*}
\eta(x)&\geq(8\lambda+(2\alpha+2\lambda)^2-1)^2-(1-(2\alpha-2\lambda)^2)(1-(2\alpha+2\lambda)^2)=\\
&=16\lambda((4\alpha+4)\lambda^2+(8\alpha^2+8\alpha+4)\lambda+4\alpha^3+4\alpha^2-\alpha-1)
\end{align*}
for all $x\in[1,\infty)$. On the one hand, this implies
\begin{align*}
\eta(x)&>16\lambda\left((4\alpha+4)\left(-\alpha+\frac{1}{2}\right)^2+(8\alpha^2+8\alpha+4)\left(-\alpha+\frac{1}{2}\right)+4\alpha^3+4\alpha^2-\alpha-1\right)=\\
&=32(1-2\alpha)\lambda
\end{align*}
for all $x\in[1,\infty)$ if $\alpha\leq1/2$; on the other hand, we get
\begin{equation*}
\eta(x)>16\lambda(4\alpha^3+4\alpha^2-\alpha-1)=16(\alpha+1)(2\alpha+1)(2\alpha-1)\lambda
\end{equation*}
for all $x\in[1,\infty)$. Putting both together, we can conclude that $\eta(x)>0$ for all $x\in[1,\infty)$. Thus, $\phi$ is strictly increasing. If $\alpha<0$, however, then
\begin{equation*}
(1-(2\alpha-2\lambda)^2)(1-(2\alpha+2\lambda)^2)=((1+2\alpha)^2-4\lambda^2)((1-2\alpha)^2-4\lambda^2)<0,
\end{equation*}
so we obtain that $\eta(x)>0$ for all $x\in[1,\infty)$, too, and $\phi$ is strictly increasing again.\\

Let $\lambda=-|\alpha|+1/2$. Then, for all $x\in[1,\infty)$,
\begin{equation*}
\eta(x)=\begin{cases} 64\lambda^2(x+\nu)^2, & \alpha\geq0, \\ 64\lambda^2(x+\nu+2\alpha)^2, & \alpha<0. \end{cases}
\end{equation*}
Therefore, $\eta(x)>0$ for all $x\in[1,\infty)$, so $\phi$ is strictly increasing also in this case.\\

However, if the chain sequence $(\phi(n))_{n\in\mathbb{N}}$ is strictly increasing (or at least nondecreasing), then its minimal parameter sequence $(c_n)_{n\in\mathbb{N}_0}$ is strictly increasing, cf. above. Hence, it remains to consider the case $\lambda<-|\alpha|+1/2$. We just assume that $\lambda<\alpha+1/2$ in the following, and we make use of the random walk polynomials considered in Section~\ref{sec:rw}.\\

Let
\begin{equation*}
a:=\frac{2\alpha+2\lambda+1}{2\alpha-2\lambda+1},\;b:=(2\alpha+1)\frac{2\alpha+2\lambda+1}{2\alpha-2\lambda+1}
\end{equation*}
and
\begin{equation*}
S_n(x):=S_n^{(a,b,\nu)}(x)\;(n\in\mathbb{N}_0).
\end{equation*}
Then
\begin{equation*}
\omega=\sqrt{1-\left(\frac{2\lambda}{2\alpha+1}\right)^2}
\end{equation*}
\eqref{eq:omegadef}. Since $(S_n(x))_{n\in\mathbb{N}_0}$ satisfies property (P) (due to Szwarc's criterion Theorem~\ref{thm:szwarcnonneg}, cf. Section~\ref{sec:rw}), Lemma~\ref{lma:assocpollrelation} implies that $(P_n(x))_{n\in\mathbb{N}_0}$ satisfies property (P), too---this argument is a generalization of Lasser's proof for the special case $(Q_n^{(\alpha,\lambda,0)}(x))_{n\in\mathbb{N}_0}$ \cite{La94}. Note that at this stage we have particularly established Lasser's conjecture on property (P). Concerning the full assertion of Theorem~\ref{thm:assocpollnonneg}, it is left to show that $(c_n)_{n\in\mathbb{N}}$ is strictly increasing again (hence, also in the case $\lambda<\alpha+1/2$ Szwarc's criteria Theorem~\ref{thm:szwarcnonneg} and Theorem~\ref{thm:szwarcturan} can be directly applied to $(P_n(x))_{n\in\mathbb{N}_0}$): Lemma~\ref{lma:assocpollrelation} yields
\begin{equation*}
P_n(x)=\frac{S_n(\omega x)}{S_n(\omega)}\;(n\in\mathbb{N}_0),
\end{equation*}
so
\begin{equation*}
x\frac{S_n(\omega x)}{S_n(\omega)}=a_n\frac{S_{n+1}(\omega x)}{S_{n+1}(\omega)}+c_n\frac{S_{n-1}(\omega x)}{S_{n-1}(\omega)}
\end{equation*}
and consequently
\begin{equation*}
x S_n(x)=\omega a_n\frac{S_n(\omega)}{S_{n+1}(\omega)}S_{n+1}(x)+\omega c_n\frac{S_n(\omega)}{S_{n-1}(\omega)}S_{n-1}(x)
\end{equation*}
and finally
\begin{equation*}
c_n^s=\omega c_n\frac{S_n(\omega)}{S_{n-1}(\omega)}
\end{equation*}
for all $n\in\mathbb{N}$. Since $(c_n^s)_{n\in\mathbb{N}}\subseteq(0,1/2)$ is strictly increasing (see Section~\ref{sec:rw}), Theorem~\ref{thm:szwarcturan} implies that $(S_n(x))_{n\in\mathbb{N}_0}$ satisfies Tur\'{a}n's inequality, i.e.,
\begin{align*}
(S_n(x))^2-S_{n+1}(x)S_{n-1}(x)&\geq0\;(x\in[-1,1]),\\
(S_n(x))^2-S_{n+1}(x)S_{n-1}(x)&>0\;(x\in(-1,1))
\end{align*}
for all $n\in\mathbb{N}$. Together with the monotonicity of $(c_n^s)_{n\in\mathbb{N}}$, we can conclude that
\begin{align*}
c_{n+1}-c_n&=\frac{c_{n+1}^s S_n(\omega)}{\omega S_{n+1}(\omega)}-\frac{c_n^s S_{n-1}(\omega)}{\omega S_n(\omega)}=\\
&=\frac{c_{n+1}^s(S_n(\omega))^2-c_n^s S_{n+1}(\omega)S_{n-1}(\omega)}{\omega S_{n+1}(\omega)S_n(\omega)}>\\
&>\frac{c_n^s(S_n(\omega))^2-c_n^s S_{n+1}(\omega)S_{n-1}(\omega)}{\omega S_{n+1}(\omega)S_n(\omega)}>\\
&>0
\end{align*}
for all $n\in\mathbb{N}$.
\end{proof}

\begin{proof}[Proof (Corollary~\ref{cor:assocpollnonneg})]
This follows from Theorem~\ref{thm:assocpollnonneg} and the identity
\begin{equation*}
(c_{n+1}-c_n)(1-c_n)=\left(c_n-\frac{1}{2}\right)^2+\phi(n+1)-\frac{1}{4}\;(n\in\mathbb{N}).
\end{equation*}
\end{proof}

\section{Amenability properties}\label{sec:amenability}

Concerning amenability properties, we show the following:

\begin{theorem}\label{thm:pwamassocpoll}
Let $\alpha>-1/2$, $\lambda\geq0$, $\nu\geq0$ and $P_n(x)=Q_n^{(\alpha,\lambda,\nu)}(x)\;(n\in\mathbb{N}_0)$. Then $\ell^1(h)$ is
\begin{enumerate}[(i)]
\item point amenable if and only if $\alpha<1/2$ and $\lambda=0$,
\item weakly amenable if and only if $\alpha<0$ and $\lambda=\nu=0$,
\item never right character amenable,
\item never amenable.
\end{enumerate}
\end{theorem}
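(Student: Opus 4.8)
The plan is to organize the four parts around two implications valid for the commutative Banach algebra $\ell^1(h)$: a nonzero bounded point derivation at a character $\varphi$ rules out weak amenability and rules out $\varphi$-amenability, so both weak amenability and right character amenability imply point amenability (the latter via \cite[Remark 2.4]{KLP08a}). Hence I would prove (i) first and then extract the necessity parts of (ii) and (iii) from it, reducing everything outside the point-amenable regime to the associated ultraspherical case $\lambda=0$, which is already treated in \cite{Ka15,Ka16b}. Part (iv) is independent: from the displayed formula for $h(n)$, together with the standard asymptotics of the associated Laguerre factor $L_n^{(2\alpha,\nu)}(-2\lambda)$ and of the Pochhammer ratio, one reads off $h(n)\to\infty$, so $\ell^1(h)$ is not amenable by \cite[Theorem 3]{La07}.

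For (i) the tool is Theorem~\ref{thm:lasserpoint}: since $\widehat{\mathbb{N}_0}=[-1,1]$, point amenability is equivalent to $\{P_n^{\prime}(x):n\in\mathbb{N}_0\}$ being unbounded for every $x\in[-1,1]$. When $\lambda=0$ the class is exactly the associated ultraspherical one, for which the equivalence ``point amenable $\Leftrightarrow\alpha<\tfrac12$'' is established in \cite{Ka15}, and I would simply invoke this. The new work is the case $\lambda>0$, where I claim a nonzero bounded point derivation exists at every interior point. The starting observation is that \eqref{eq:assocpollrec} gives $c_n a_{n-1}=c_n(1-c_{n-1})=\phi(n)$ for all $n\in\mathbb{N}$ (using $a_{n-1}=1-c_{n-1}$ and $a_0=1$), while expanding \eqref{eq:phidef} yields
\begin{equation*}
\phi(x)=\frac14-\frac{\lambda}{2x}+O(x^{-2})\qquad(x\to\infty).
\end{equation*}
Thus for $\lambda>0$ one has $c_n a_{n-1}=\phi(n)<\tfrac14$ for all sufficiently large $n$, whereas Proposition~\ref{prp:kahlerpoint} demands the bound $c_n a_{n-1}\le\tfrac14$ for \emph{all} $n$.

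I would therefore establish a tail version of Proposition~\ref{prp:kahlerpoint}: if $c_n a_{n-1}\le\tfrac14$ holds merely for all $n\ge N$, then $\{P_n^{\prime}(x)\}$ is still bounded for each $x\in(-1,1)$. Boundedness of the whole sequence is a tail property, since the first $N$ values are automatically finite, and the tail can be controlled by differentiating the recurrence \eqref{eq:threetermrec} and comparing the resulting recursion for $(P_n^{\prime}(x))$ against the borderline recursion $c_n a_{n-1}\equiv\tfrac14$ that underlies Proposition~\ref{prp:kahlerpoint}, whose normalized derivatives stay bounded on compact subsets of $(-1,1)$. This is where the chain-sequence and continued-fraction machinery enters, and the Tur\'{a}n inequality of Theorem~\ref{thm:szwarcturan}---available because $(c_n)$ is strictly increasing to $\tfrac12$ by Theorem~\ref{thm:assocpollnonneg} and \eqref{eq:assocpollrec}---supplies the monotonicity needed to dominate the finitely many initial terms and splice them to the tail. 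Absorbing precisely those indices $n<N$ at which $c_n a_{n-1}$ may exceed $\tfrac14$ (which can happen for small $\lambda$) is the main obstacle of the proof; once it is handled, a nonzero bounded point derivation at each $x\in(-1,1)$ follows as in Proposition~\ref{prp:kahlerpoint}, so $\ell^1(h)$ is not point amenable whenever $\lambda>0$.

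Parts (ii) and (iii) then follow quickly. If $\ell^1(h)$ were weakly amenable it would admit no nonzero bounded point derivation at any $\varphi_x$, hence be point amenable, forcing $\lambda=0$ by (i); for $\lambda=0$ the characterization ``weakly amenable $\Leftrightarrow\alpha<0$ and $\nu=0$'' is the associated ultraspherical result of \cite{Ka15}, which gives (ii). Likewise right character amenability implies $\varphi$-amenability at every $\varphi\in\Delta(\ell^1(h))$, hence the nonexistence of nonzero bounded point derivations and therefore point amenability, forcing $\lambda=0$ again; and for the associated ultraspherical class the failure of right character amenability throughout the parameter range is exactly the complete clarification carried out (via character estimates) in \cite{Ka15,Ka16b}. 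This yields (iii) and completes the proof.
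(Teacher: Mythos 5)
Your global architecture is the same as the paper's: prove failure of point amenability for all $\lambda>0$, quote the associated ultraspherical results of \cite{Ka15,Ka16b} for $\lambda=0$, and let the necessity halves of (ii) and (iii) fall out of (i); your alternative route to (iv) via $h(n)\to\infty$ and \cite[Theorem 3]{La07} is legitimate (the paper simply deduces (iv) from (iii)). You also correctly located the obstacle: by \eqref{eq:assocpollrec} one has $c_na_{n-1}=\phi(n)$, and for small $\lambda>0$ this exceeds $\tfrac14$ at finitely many initial indices, so Proposition~\ref{prp:kahlerpoint} does not apply directly. The genuine gap is your proposed fix: the ``tail version'' of Proposition~\ref{prp:kahlerpoint} is \emph{false}, and boundedness of $\{P_n'(x)\}$ is emphatically not a tail property. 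Counterexample: the Chebyshev polynomials of the first kind, $P_n=T_n$, form a polynomial hypergroup with $b_n\equiv0$, $a_0=1$, $a_n=c_n=\tfrac12\;(n\in\mathbb{N})$, hence $c_1a_0=\tfrac12$ but $c_na_{n-1}=\tfrac14$ for all $n\geq2$, so your hypothesis holds with $N=2$; yet $P_n'(x)=nU_{n-1}(x)$ and, writing $x=\cos\theta$ with $\theta\in(0,\pi)$, $\sup_n n\left|\sin(n\theta)\right|/\sin\theta=\infty$ for every such $\theta$ (for $\theta/\pi=p/q$ rational one has $\left|\sin(n\theta)\right|\geq\sin(\pi/q)$ whenever $q\nmid n$; for $\theta/\pi$ irrational use equidistribution). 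So no $x\in(-1,1)$ admits a nonzero bounded point derivation, although the tail condition is satisfied. Structurally, perturbing finitely many recurrence coefficients mixes in the second solution of the tail recurrence, whose derivative grows linearly in $n$; the boundedness in Proposition~\ref{prp:kahlerpoint} rests on a cancellation tied to the \emph{particular} solution, which your splicing argument cannot restore. Your sketch also overreaches: it would yield bounded point derivations at \emph{every} interior point in the small-$\lambda$ regime, which is neither needed nor established by the paper.

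What the paper actually does for $\lambda>0$ is split at $\lambda\geq-\alpha-\nu-1+\frac{1}{2}\sqrt{4(\nu+1)(2\alpha+\nu+1)+1}$: in that range $(2\alpha+2\lambda)^2+8\lambda n+8\lambda\nu-1\geq0$ gives $c_na_{n-1}\leq\tfrac14$ for \emph{all} $n$, and Proposition~\ref{prp:kahlerpoint} applies as stated. In the complementary range (where one checks $\lambda<\alpha+\tfrac12$, so Lemma~\ref{lma:assocpollrelation} is available) the paper proves boundedness of $P_n'$ at the \emph{single} point $x=0$, which suffices by Theorem~\ref{thm:lasserpoint}: from $P_n'(0)=\rho\,S_n'(0)/S_n(\rho)$ it establishes the exactly matching two-sided asymptotics $S_n'(0)=\mathcal{O}(n\gamma^n)$ (an elementary induction on the recurrence, odd indices only) and $1/S_n(\rho)=\mathcal{O}(n^{-1}\gamma^{-n})$, the latter via Lemma~\ref{lma:assymptoticrelation} (chain-sequence parameter comparison plus the Tur\'{a}n inequality of Theorem~\ref{thm:szwarcturan}, giving $S_n(\rho)\sim\tau\widetilde{S_n}(\rho)$) and the inductive lower bound of Lemma~\ref{lma:assocpollasymptotics}, $\widetilde{S_n}(\rho)\geq\frac{2\lambda n+2\lambda\nu+2\alpha+1}{2\lambda\nu+2\alpha+1}\gamma^n$. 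Note the exact cancellation of the factors $n$ and $n^{-1}$: this quantitative balance is the heart of the proof, and it is precisely what your proposal replaces by an unproved (and, as shown, false) general perturbation principle. To repair your argument you would need such explicit asymptotics at a chosen point, not a tail reduction.
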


We use the notation of the previous sections. The following lemma is needed for the proof of Theorem~\ref{thm:pwamassocpoll} and provides an asymptotic relation between the sequences $(S_n^{(a,b,\nu)}(\omega))_{n\in\mathbb{N}_0}$ and $(\widetilde{S_n^{(a,b,\nu)}}(\omega))_{n\in\mathbb{N}_0}$ (cf. Section~\ref{sec:rw}). Again, a Tur\'{a}n type inequality will play a crucial role.

\begin{lemma}\label{lma:assymptoticrelation}
Let $a>1$, $b>0$ and $\nu\geq0$. Then there is some $\tau>0$ such that
\begin{equation*}
\frac{S_n^{(a,b,\nu)}(\omega)}{\widetilde{S_n^{(a,b,\nu)}}(\omega)}\to\tau\;(n\to\infty).
\end{equation*}
\end{lemma}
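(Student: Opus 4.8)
The plan is to pass to the monic polynomials and to isolate the contribution of the single recurrence coefficient in which the two systems differ. Throughout I write $S_n=S_n^{(a,b,\nu)}$, $\widetilde{S_n}=\widetilde{S_n^{(a,b,\nu)}}$, and $\sigma_n=\sigma_n^{(a,b,\nu)}$, $\widetilde{\sigma_n}=\widetilde{\sigma_n^{(a,b,\nu)}}$ for the corresponding monic versions. If $\nu=0$, then $\widetilde{c_n^s}=c_n^s$ for all $n$ (an immediate induction from $c_1^s=\widetilde{c_1^s}$), so $S_n=\widetilde{S_n}$ and $\tau=1$; hence I may assume $\nu>0$, which yields $c_1^s<\widetilde{c_1^s}$. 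Since $\sigma_n=S_n\prod_{j=1}^{n-1}a_j^s$ and likewise for the tilde objects, one has $\frac{S_n(\omega)}{\widetilde{S_n}(\omega)}=\frac{\sigma_n(\omega)}{\widetilde{\sigma_n}(\omega)}\prod_{j=1}^{n-1}\frac{1-\widetilde{c_j^s}}{1-c_j^s}$. The product converges to a positive limit: setting $\delta_j:=\widetilde{c_j^s}-c_j^s\ge0$, the defining recursions give $\delta_{j+1}=\frac{\widetilde{c_{j+1}^s}}{1-c_j^s}\,\delta_j$, and since $\widetilde{c_{j+1}^s},c_j^s<\frac{1}{a+1}$ we get $\frac{\widetilde{c_{j+1}^s}}{1-c_j^s}<\frac{1/(a+1)}{a/(a+1)}=\frac1a<1$; thus $(\delta_j)$ decays geometrically, $\sum_j\delta_j<\infty$, and $\prod_j\frac{1-\widetilde{c_j^s}}{1-c_j^s}$ converges to a positive number.

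It therefore remains to show that $g_n:=\frac{\sigma_n(\omega)}{\widetilde{\sigma_n}(\omega)}$ converges to a positive limit. The crucial structural fact is that $\widetilde{\lambda_n}=\lambda_n$ for all $n\ge2$, so both $(\sigma_n(\omega))_n$ and $(\widetilde{\sigma_n}(\omega))_n$ solve the same recurrence $y_{n+1}=\omega y_n-\lambda_n y_{n-1}$ for $n\ge2$ (recall $\widetilde{\sigma_n}(\omega),\sigma_n(\omega)>0$, cf. after Lemma~\ref{lma:assocrworthmeasures}). For their Casoratian $W_n:=\widetilde{\sigma_n}(\omega)\sigma_{n-1}(\omega)-\widetilde{\sigma_{n-1}}(\omega)\sigma_n(\omega)$ one computes $W_{n+1}=\lambda_n W_n$, whence $W_n=W_2\prod_{k=2}^{n-1}\lambda_k$ with $W_2=\omega(c_1^s-\widetilde{c_1^s})=-\omega\delta_1<0$. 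Consequently $g_n-g_{n-1}=-\frac{W_n}{\widetilde{\sigma_n}(\omega)\widetilde{\sigma_{n-1}}(\omega)}=\omega\delta_1 T_n>0$, where $T_n:=\frac{\prod_{k=2}^{n-1}\lambda_k}{\widetilde{\sigma_n}(\omega)\widetilde{\sigma_{n-1}}(\omega)}$; so $(g_n)_n$ is strictly increasing from $g_1=1$ and converges if and only if $\sum_n T_n<\infty$.

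Here a Tur\'an type inequality enters. Since $(\widetilde{c_n^s})_n$ is increasing and bounded by $\frac{1}{a+1}<\frac12$, Theorem~\ref{thm:szwarcturan} applies to $(\widetilde{S_n})_n$, and because $\omega\in(0,1)$ the strict inequality shows that $\frac{\widetilde{S_{n+1}}(\omega)}{\widetilde{S_n}(\omega)}$ is strictly decreasing. Hence $r_n:=\frac{\widetilde{\sigma_{n+1}}(\omega)}{\widetilde{\sigma_n}(\omega)}=\widetilde{a_n^s}\,\frac{\widetilde{S_{n+1}}(\omega)}{\widetilde{S_n}(\omega)}$, a product of two decreasing positive sequences, is itself strictly decreasing; passing to the limit in $r_n=\omega-\widetilde{\lambda_n}/r_{n-1}$ (with $\widetilde{\lambda_n}\to\frac{\omega^2}{4}$) forces $r_n\downarrow\frac\omega2$, so $\varepsilon_n:=r_n-\frac\omega2>0$ decreases to $0$. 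A short computation gives $\frac{T_{n+1}}{T_n}=\frac{\omega-r_n}{r_n}$, so a one-sided edge estimate $\varepsilon_n\ge c/\sqrt n$ yields $\frac{T_{n+1}}{T_n}\le1-\frac{2c}{\omega\sqrt n}$, hence $T_n=O(e^{-c'\sqrt n})$, which is summable. To obtain this estimate I would use $\widetilde{\lambda_n}=r_{n-1}(\omega-r_n)$, i.e.\ the discrete Riccati identity $\varepsilon_{n-1}\varepsilon_n=\bigl(\tfrac{\omega^2}{4}-\widetilde{\lambda_n}\bigr)+\tfrac\omega2(\varepsilon_{n-1}-\varepsilon_n)$; since $(\varepsilon_n)$ is decreasing the last term is nonnegative, so $\varepsilon_{n-1}^2\ge\varepsilon_{n-1}\varepsilon_n\ge\tfrac{\omega^2}{4}-\widetilde{\lambda_n}$, and the explicit form of $\widetilde{c_n^s}$ gives $\tfrac{\omega^2}{4}-\widetilde{\lambda_n}=\frac{b(a-1)}{(a+1)^3}\,\frac1n+O(n^{-2})\ge\kappa'/n$ eventually (this is exactly where $a>1$ and $b>0$ are used). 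Thus $\varepsilon_{n-1}\ge c/\sqrt n$, $(g_n)$ converges to some $g_\infty\ge1$, and $\tau:=g_\infty\prod_{j=1}^{\infty}\frac{1-\widetilde{c_j^s}}{1-c_j^s}>0$ is the desired limit.

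The main obstacle is precisely this last estimate: the recurrence sits exactly at the hard edge $\omega$, where the characteristic roots of the limiting recurrence coincide at $\omega/2$, so the contraction ratio $T_{n+1}/T_n\to1$ and summability of $(T_n)$ is only marginal. Tur\'an's inequality is what makes it tractable, since it forces $r_n$ to approach $\omega/2$ \emph{monotonically from above} (guaranteeing $\varepsilon_n>0$ and that $(\varepsilon_n)$ is decreasing); the Riccati identity then converts the $\Theta(1/n)$ deficit $\tfrac{\omega^2}{4}-\widetilde{\lambda_n}$ into the $\Omega(1/\sqrt n)$ lower bound on $\varepsilon_n$ that is strong enough to make $\sum_n T_n$ converge. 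I expect verifying the sign and order of $\tfrac{\omega^2}{4}-\widetilde{\lambda_n}$ (and the routine geometric bookkeeping for $(\delta_j)$) to be the only genuinely computational parts.
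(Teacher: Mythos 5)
Your proof is correct, and its analytic core takes a genuinely different route from the paper's. I verified the computational pivots: the recursion $\delta_{j+1}=\frac{\widetilde{c_{j+1}^s}}{1-c_j^s}\,\delta_j<\frac{1}{a}\,\delta_j$ (so the normalization product converges by geometric decay, using $a>1$); the Casoratian relations $W_{n+1}=\lambda_n W_n$ and $W_2=\omega(c_1^s-\widetilde{c_1^s})$; the identity $T_{n+1}/T_n=(\omega-r_n)/r_n$ via $\widetilde{\lambda_n}=r_{n-1}(\omega-r_n)$; and the edge deficit, whose numerator works out to $(a+1)(a-1)b(n+\nu)+ab(b-a-1)$, confirming your claim $\frac{\omega^2}{4}-\widetilde{\lambda_n}=\frac{(a-1)b}{(a+1)^3}n^{-1}+\mathcal{O}(n^{-2})$ and its eventual positivity (consistent with the paper's observation that $(\widetilde{\lambda_n})$ is eventually strictly increasing with limit $\frac{\omega^2}{4}$). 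The paper handles both the normalization ratio and the monic ratio with one chain-sequence device: it writes $\sigma_n(1)/\widetilde{\sigma_n}(1)=\prod_{k=2}^n\widetilde{c_k^s}/c_k^s$ and $\sigma_n(\omega)/\widetilde{\sigma_n}(\omega)=\prod_{k=2}^n\widetilde{\chi_k}/\chi_k$, where $(\widetilde{\chi_n})$ and $(\chi_n)$ are parameter sequences for the chain sequence $(\lambda_n/\omega^2)_{n\geq2}$, applies Theorem~\ref{thm:szwarcturan} exactly as you do to show $(\widetilde{\chi_n})$ is strictly increasing---note $\widetilde{\chi_n}=1-r_n/\omega$, so this \emph{is} your monotonicity of $r_n$---and then invokes the cited Wall--Chihara fact that $\prod m_n/p_n$ converges absolutely for any non-maximal parameter sequence. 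You replace that black box by a self-contained quantitative argument: the Casoratian makes $g_n$ monotone with explicit increments $\omega\delta_1 T_n$, and the discrete Riccati identity converts the $\Theta(1/n)$ deficit into $\varepsilon_n\gtrsim n^{-1/2}$, hence $T_n=\mathcal{O}(e^{-c\sqrt{n}})$, which is summable. What each buys: the paper's route is shorter modulo the chain-sequence machinery and needs no asymptotics of $\widetilde{\lambda_n}$, whereas yours avoids parameter-sequence theory entirely, exhibits an explicit sub-exponential rate of convergence reflecting the hard-edge degeneracy at $\omega$, and makes visible precisely where $a>1$ and $b>0$ enter. Two one-line glosses if you write it up: when passing to the limit in $r_n=\omega-\widetilde{\lambda_n}/r_{n-1}$ you should note $r_\infty>0$ (if $r_{n-1}\to0^+$ then $r_n\to-\infty$, contradicting positivity), and the bound $\frac{\omega/2-\varepsilon_n}{\omega/2+\varepsilon_n}\leq1-\frac{2\varepsilon_n}{\omega}$ uses $r_n\leq\omega$, which is immediate from $r_n=\omega-\widetilde{\lambda_n}/r_{n-1}<\omega$.
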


\begin{proof}
In the following, let $N\geq2$ be as in Section~\ref{sec:rw} (so $(\widetilde{\lambda_n})_{n\geq N}=(\lambda_n)_{n\geq N}$ is strictly increasing). The proof will be divided into three steps and use the monic versions $(\sigma_n^{(a,b,\nu)}(x))_{n\in\mathbb{N}_0}$, $(\widetilde{\sigma_n^{(a,b,\nu)}}(x))_{n\in\mathbb{N}_0}$:\\

\textit{Step 1:} we show that there is some $\tau_1>0$ such that
\begin{equation*}
\frac{\sigma_n^{(a,b,\nu)}(1)}{\widetilde{\sigma_n^{(a,b,\nu)}}(1)}\to\tau_1\;(n\to\infty).
\end{equation*}
Comparing the leading coefficients of $\widetilde{S_n^{(a,b,\nu)}}(x)$ and $\widetilde{\sigma_n^{(a,b,\nu)}}(x)$, we see that $\widetilde{\sigma_n^{(a,b,\nu)}}(1)=\prod_{k=0}^{n-1}\widetilde{a_k^s}$ for every $n\in\mathbb{N}$; in the same way, one has $\sigma_n^{(a,b,\nu)}(1)=\prod_{k=0}^{n-1}a_k^s$. Consequently, we have
\begin{equation*}
\frac{\sigma_n^{(a,b,\nu)}(1)}{\widetilde{\sigma_n^{(a,b,\nu)}}(1)}=\frac{\prod_{k=0}^{n-1}a_k^s}{\prod_{k=0}^{n-1}\widetilde{a_k^s}}=\frac{\prod_{k=1}^{n-1}a_k^s}{\prod_{k=1}^{n-1}\widetilde{a_k^s}}=\frac{\prod_{k=1}^{n-1}\frac{\widetilde{\lambda_{k+1}}}{\widetilde{a_k^s}}}{\prod_{k=1}^{n-1}\frac{\lambda_{k+1}}{a_k^s}}=\frac{\prod_{k=2}^n\widetilde{c_k^s}}{\prod_{k=2}^n c_k^s}
\end{equation*}
for each $n\geq2$. Since neither $(\widetilde{c_n^s})_{n\geq N-1}$ nor $(c_n^s)_{n\geq N-1}$ is the maximal parameter sequence for $(\widetilde{\lambda_n})_{n\geq N}=(\lambda_n)_{n\geq N}$ (cf. the proof of Lemma~\ref{lma:assocrworthmeasures}), the infinite products $\prod_{n=N}^{\infty}m_n/\widetilde{c_n^s}$ and $\prod_{n=N}^{\infty}m_n/c_n^s$ converge absolutely, where $(m_n)_{n\geq N-1}$ shall denote the minimal parameter sequence for $(\widetilde{\lambda_n})_{n\geq N}=(\lambda_n)_{n\geq N}$. This establishes the assertion.\\

\textit{Step 2:} we show that there is some $\tau_{\omega}>0$ such that
\begin{equation*}
\frac{\sigma_n^{(a,b,\nu)}(\omega)}{\widetilde{\sigma_n^{(a,b,\nu)}}(\omega)}\to\tau_{\omega}\;(n\to\infty).
\end{equation*}
Let $(\widetilde{\chi_n})_{n\in\mathbb{N}}\subseteq(0,1)$ and $(\chi_n)_{n\in\mathbb{N}}\subseteq(0,1)$ be defined by
\begin{align*}
\widetilde{\chi_n}&:=1-\frac{\widetilde{\sigma_{n+1}^{(a,b,\nu)}}(\omega)}{\omega\widetilde{\sigma_n^{(a,b,\nu)}}(\omega)}=\frac{\widetilde{\lambda_n}\widetilde{\sigma_{n-1}^{(a,b,\nu)}}(\omega)}{\omega\widetilde{\sigma_n^{(a,b,\nu)}}(\omega)},\\
\chi_n&:=1-\frac{\sigma_{n+1}^{(a,b,\nu)}(\omega)}{\omega\sigma_n^{(a,b,\nu)}(\omega)}=\frac{\lambda_n\sigma_{n-1}^{(a,b,\nu)}(\omega)}{\omega\sigma_n^{(a,b,\nu)}(\omega)}.
\end{align*}
If $n\geq2$, then
\begin{equation*}
\widetilde{\chi_n}(1-\widetilde{\chi_{n-1}})=\frac{\widetilde{\lambda_n}\widetilde{\sigma_{n-1}^{(a,b,\nu)}}(\omega)}{\omega\widetilde{\sigma_n^{(a,b,\nu)}}(\omega)}\frac{\widetilde{\sigma_n^{(a,b,\nu)}}(\omega)}{\omega\widetilde{\sigma_{n-1}^{(a,b,\nu)}}(\omega)}=\frac{\widetilde{\lambda_n}}{\omega^2},
\end{equation*}
and in the same way we have
\begin{equation*}
\chi_n(1-\chi_{n-1})=\frac{\lambda_n}{\omega^2}=\frac{\widetilde{\lambda_n}}{\omega^2}.
\end{equation*}
Hence, $(\widetilde{\lambda_n}/\omega^2)_{n\geq2}=(\lambda_n/\omega^2)_{n\geq2}\subseteq(0,1)$ is a chain sequence and both $(\widetilde{\chi_n})_{n\in\mathbb{N}}$ and $(\chi_n)_{n\in\mathbb{N}}$ are parameter sequences. By construction, we have $\widetilde{\sigma_n^{(a,b,\nu)}}(\omega)=\omega^n\prod_{k=1}^{n-1}(1-\widetilde{\chi_k})$ and $\sigma_n^{(a,b,\nu)}(\omega)=\omega^n\prod_{k=1}^{n-1}(1-\chi_k)$ for every $n\geq2$. Therefore, we obtain
\begin{equation*}
\frac{\sigma_n^{(a,b,\nu)}(\omega)}{\widetilde{\sigma_n^{(a,b,\nu)}}(\omega)}=\frac{\prod_{k=1}^{n-1}(1-\chi_k)}{\prod_{k=1}^{n-1}(1-\widetilde{\chi_k})}=\frac{\prod_{k=1}^{n-1}\frac{\frac{\widetilde{\lambda_{k+1}}}{\omega^2}}{1-\widetilde{\chi_k}}}{\prod_{k=1}^{n-1}\frac{\frac{\lambda_{k+1}}{\omega^2}}{1-\chi_k}}=\frac{\prod_{k=2}^n\widetilde{\chi_k}}{\prod_{k=2}^n\chi_k}
\end{equation*}
for each $n\geq2$. We have
\begin{equation*}
\widetilde{\chi_1}=\frac{\widetilde{\lambda_1}\widetilde{\sigma_0^{(a,b,\nu)}}(\omega)}{\omega\widetilde{\sigma_1^{(a,b,\nu)}}(\omega)}=\frac{\widetilde{c_1^s}}{\omega^2}
\end{equation*}
and thus
\begin{equation*}
\chi_1=\frac{\lambda_1\sigma_0^{(a,b,\nu)}(\omega)}{\omega\sigma_1^{(a,b,\nu)}(\omega)}=\frac{c_1^s}{\omega^2}\leq\widetilde{\chi_1}
\end{equation*}
because $c_1^s\leq\widetilde{c_1^s}$. Hence, we obtain that $\chi_n\leq\widetilde{\chi_n}$ for all $n\in\mathbb{N}$ (cf. the theory of chain sequences as recalled in Section~\ref{sec:preliminaries}). We now \textit{claim} that $(\widetilde{\chi_n})_{n\in\mathbb{N}}$ is strictly increasing. Once the claim is proven, we can conclude as follows: since the sequences $(\widetilde{\lambda_n}/\omega^2)_{n\geq N}=(\lambda_n/\omega^2)_{n\geq N}$ and $(\widetilde{\chi_n})_{n\geq N-1}$ are strictly increasing, $(\widetilde{\chi_n})_{n\geq N-1}$ is not the maximal parameter sequence for $(\widetilde{\lambda_n}/\omega^2)_{n\geq N}=(\lambda_n/\omega^2)_{n\geq N}$. Consequently, $(\chi_n)_{n\geq N-1}$ is not the maximal parameter sequence for $(\widetilde{\lambda_n}/\omega^2)_{n\geq N}=(\lambda_n/\omega^2)_{n\geq N}$. Therefore, the infinite products $\prod_{n=N}^{\infty}m_n^{\prime}/\widetilde{\chi_n}$ and $\prod_{n=N}^{\infty}m_n^{\prime}/\chi_n$ converge absolutely, where $(m_n^{\prime})_{n\geq N-1}$ shall denote the minimal parameter sequence for $(\widetilde{\lambda_n}/\omega^2)_{n\geq N}=(\lambda_n/\omega^2)_{n\geq N}$; this establishes the assertion.\\

It is left to establish the claim: Since $(\widetilde{c_n^s})_{n\in\mathbb{N}}\subseteq(0,1/2)$ is strictly increasing, we can apply Theorem~\ref{thm:szwarcturan} and obtain that $(\widetilde{S_n^{(a,b,\nu)}}(x))_{n\in\mathbb{N}_0}$ satisfies Tur\'{a}n's inequality, i.e.,
\begin{align*}
(\widetilde{S_n^{(a,b,\nu)}}(x))^2-\widetilde{S_{n+1}^{(a,b,\nu)}}(x)\widetilde{S_{n-1}^{(a,b,\nu)}}(x)&\geq0\;(x\in[-1,1]),\\
(\widetilde{S_n^{(a,b,\nu)}}(x))^2-\widetilde{S_{n+1}^{(a,b,\nu)}}(x)\widetilde{S_{n-1}^{(a,b,\nu)}}(x)&>0\;(x\in(-1,1))
\end{align*}
for all $n\in\mathbb{N}$. Therefore, we have the estimation
\begin{align*}
\widetilde{\chi_{n+1}}-\widetilde{\chi_n}&=\frac{\widetilde{\lambda_{n+1}}\widetilde{\sigma_n^{(a,b,\nu)}}(\omega)}{\omega\widetilde{\sigma_{n+1}^{(a,b,\nu)}}(\omega)}-\frac{\widetilde{\lambda_n}\widetilde{\sigma_{n-1}^{(a,b,\nu)}}(\omega)}{\omega\widetilde{\sigma_n^{(a,b,\nu)}}(\omega)}=\\
&=\frac{\widetilde{\lambda_{n+1}}(\widetilde{\sigma_n^{(a,b,\nu)}}(\omega))^2-\widetilde{\lambda_n}\widetilde{\sigma_{n+1}^{(a,b,\nu)}}(\omega)\widetilde{\sigma_{n-1}^{(a,b,\nu)}}(\omega)}{\omega\widetilde{\sigma_{n+1}^{(a,b,\nu)}}(\omega)\widetilde{\sigma_n^{(a,b,\nu)}}(\omega)}=\\
&=\frac{\widetilde{c_{n+1}^s}(\widetilde{S_n^{(a,b,\nu)}}(\omega))^2-\widetilde{c_n^s}\widetilde{S_{n+1}^{(a,b,\nu)}}(\omega)\widetilde{S_{n-1}^{(a,b,\nu)}}(\omega)}{\omega\widetilde{S_{n+1}^{(a,b,\nu)}}(\omega)\widetilde{S_n^{(a,b,\nu)}}(\omega)}>\\
&>\frac{\widetilde{c_n^s}(\widetilde{S_n^{(a,b,\nu)}}(\omega))^2-\widetilde{c_n^s}\widetilde{S_{n+1}^{(a,b,\nu)}}(\omega)\widetilde{S_{n-1}^{(a,b,\nu)}}(\omega)}{\omega\widetilde{S_{n+1}^{(a,b,\nu)}}(\omega)\widetilde{S_n^{(a,b,\nu)}}(\omega)}>\\
&>0
\end{align*}
for all $n\in\mathbb{N}$.\\

\textit{Step 3:} Combining Step 1 and Step 2, we see that
\begin{equation*}
\tau:=\frac{\tau_{\omega}}{\tau_1}
\end{equation*}
is as desired.
\end{proof}

Our proof of Theorem~\ref{thm:pwamassocpoll} needs another preparation, which is an extension of \cite[Lemma 4.1]{Ka15}:

\begin{lemma}\label{lma:assocpollasymptotics}
Let $\alpha>-1/2$, $0\leq\lambda<\alpha+1/2$ and $\nu\geq0$. Let
\begin{align*}
\rho&:=\sqrt{1-\left(\frac{2\lambda}{2\alpha+1}\right)^2},\\
\gamma&:=\sqrt{\frac{2\alpha-2\lambda+1}{2\alpha+2\lambda+1}},
\end{align*}
and let
\begin{align*}
s_n&:=\frac{(2\alpha+2\lambda+1)(n+\nu+2\alpha+1)}{(2\alpha+1)(2n+2\nu+2\alpha+2\lambda+1)}\;(n\in\mathbb{N}),\\
t_n&:=1-s_n=\frac{(2\alpha-2\lambda+1)(n+\nu)}{(2\alpha+1)(2n+2\nu+2\alpha+2\lambda+1)}\;(n\in\mathbb{N}).
\end{align*}
Then the recurrence relation $\psi_1:=\rho$,
\begin{equation*}
\psi_{n+1}:=\frac{\rho\psi_n-t_n}{s_n\psi_n}\;(n\in\mathbb{N}),
\end{equation*}
defines a sequence $(\psi_n)_{n\in\mathbb{N}}\subseteq[\gamma,\infty)$ which satisfies
\begin{equation}\label{eq:assocpollasymptotics}
\psi_n\geq\frac{2\lambda n+2\lambda\nu+2\alpha+1}{2\lambda n+2\lambda\nu+2\alpha-2\lambda+1}\gamma
\end{equation}
for each $n\in\mathbb{N}$.
\end{lemma}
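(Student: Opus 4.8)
The plan is to prove the bound \eqref{eq:assocpollasymptotics} directly by induction on $n$, after recording a few identities that make the recurrence transparent. The first observation I would make is that the recurrence map $f_n(\psi):=\frac{\rho\psi-t_n}{s_n\psi}=\frac{\rho}{s_n}-\frac{t_n}{s_n\psi}$ is strictly increasing on $(0,\infty)$, since its derivative $\frac{t_n}{s_n\psi^2}$ is positive (both $s_n,t_n>0$, as $\lambda<\alpha+\frac12$ and $\nu\geq0$). This monotonicity, together with positivity of the iterates, is exactly what will let me replace $\psi_n$ by the conjectured lower bound inside $f_n$ without reversing the inequality; positivity itself will be carried along as part of the induction rather than assumed. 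It is convenient to abbreviate the claimed bound: writing $w_n:=2\lambda(n+\nu)+2\alpha+1$, so that $w_{n-1}=w_n-2\lambda$ and $w_{n+1}=w_n+2\lambda$, the right-hand side of \eqref{eq:assocpollasymptotics} is exactly $L_n:=\gamma\,w_n/w_{n-1}$. Since $w_n-w_{n-1}=2\lambda\geq0$ we have $L_n\geq\gamma$, so once $\psi_n\geq L_n$ is established the inclusion $(\psi_n)_{n\in\mathbb{N}}\subseteq[\gamma,\infty)$ and the well-definedness follow for free.

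The two computational identities I would isolate at the outset are $\rho\gamma=\frac{2\alpha-2\lambda+1}{2\alpha+1}$ and $\gamma^2=\frac{2\alpha-2\lambda+1}{2\alpha+2\lambda+1}$, both immediate from the definitions of $\rho$ and $\gamma$; I would also note $\rho/\gamma=\frac{2\alpha+2\lambda+1}{2\alpha+1}$ and $s_n+t_n=1$. Combined with the explicit forms of $s_n$ and $t_n$, these give the clean relation $s_n\gamma^2=\frac{(2\alpha-2\lambda+1)(n+\nu+2\alpha+1)}{(2\alpha+1)(2n+2\nu+2\alpha+2\lambda+1)}$, in which the factor $2\alpha-2\lambda+1$ appears throughout and cancels in the induction step. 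The point of these identities is that every term in the inequality to be proven will carry the common factor $\frac{2\alpha-2\lambda+1}{2\alpha+1}$ and the common denominator $D_n:=2n+2\nu+2\alpha+2\lambda+1$, both of which divide out.

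For the base case $n=1$ I would verify $\rho\geq L_1$; using $\rho/\gamma=\frac{2\alpha+2\lambda+1}{2\alpha+1}$ and clearing denominators, this reduces to $4\lambda^2\nu\geq0$, which holds. For the induction step, assuming $\psi_n\geq L_n>0$, the monotonicity of $f_n$ on $(0,\infty)$ gives $\psi_{n+1}=f_n(\psi_n)\geq f_n(L_n)$, so it suffices to prove $f_n(L_n)\geq L_{n+1}$, i.e.\ $\rho L_n-t_n\geq s_nL_nL_{n+1}$. Substituting $L_n=\gamma\,w_n/w_{n-1}$ and $L_{n+1}=\gamma\,w_{n+1}/w_n$, multiplying by $w_{n-1}>0$, and invoking the two $\rho$--$\gamma$ identities above, I would divide by the positive factor $\frac{2\alpha-2\lambda+1}{2\alpha+1}$ and multiply by $D_n>0$; the inequality then collapses to the elementary statement $w_nD_n-(n+\nu+2\alpha+1)w_{n+1}\geq(n+\nu)w_{n-1}$. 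Writing $m:=n+\nu$ and using $w_{n\pm1}=w_n\pm2\lambda$, the coefficient of $w_n$ on the left minus right is $D_n-(m+2\alpha+1)-m=2\lambda$, so the whole difference equals $2\lambda\bigl(w_n-2\alpha-1\bigr)=4\lambda^2 m=4\lambda^2(n+\nu)\geq0$.

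This closes the induction and simultaneously shows $\psi_n\geq L_n\geq\gamma>0$ for every $n\in\mathbb{N}$, so the sequence is well-defined and contained in $[\gamma,\infty)$, as asserted. I do not anticipate a genuine obstacle: the only delicate points are bookkeeping ones, namely that positivity of $\psi_n$ must be threaded through the induction so that the monotonicity of $f_n$ can be applied on $(0,\infty)$, and that the two $\rho$--$\gamma$ identities must be used to trigger the cancellation. Once the substitution $w_n=2\lambda(n+\nu)+2\alpha+1$ is in place, both the base case and the step reduce to a manifestly nonnegative multiple of $\lambda^2$, so no Tur\'{a}n-type input is needed for this particular lemma.
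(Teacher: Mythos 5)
Your proposal is correct and takes essentially the same approach as the paper: induction on $n$ with the identical lower bound, where both the base case and the inductive step collapse to the manifestly nonnegative quantities $4\lambda^2\nu$ and $4\lambda^2(n+\nu)$, respectively. The only cosmetic difference is that you invoke monotonicity of the map $\psi\mapsto\frac{\rho\psi-t_n}{s_n\psi}$ on $(0,\infty)$ to push the bound forward, which spares you the paper's intermediate verification that $\rho-\gamma s_n\frac{2\lambda n+2\lambda\nu+2\alpha+2\lambda+1}{2\lambda n+2\lambda\nu+2\alpha+1}>0$ before solving the target inequality for $\psi_n$.
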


\begin{proof}
We modify the proof of \cite[Lemma 4.1]{Ka15} (and give the details for the sake of completeness): since
\begin{equation*}
\frac{2\lambda+2\lambda\nu+2\alpha+1}{2\lambda\nu+2\alpha+1}\gamma=\frac{(2\alpha+2\lambda+1)(2\lambda\nu+2\alpha+1)-4\lambda^2\nu}{(2\alpha+1)(2\lambda\nu+2\alpha+1)}\gamma\leq\frac{2\alpha+2\lambda+1}{2\alpha+1}\gamma=\rho=\psi_1,
\end{equation*}
\eqref{eq:assocpollasymptotics} holds true for $n=1$. Let $n\in\mathbb{N}$ be arbitrary but fixed, assume that $\psi_n$ is well-defined and assume that \eqref{eq:assocpollasymptotics} is satisfied for $n$. Since $\psi_n>0$, $\psi_{n+1}$ is well-defined, too, and it is left to establish that \eqref{eq:assocpollasymptotics} is fulfilled for $n+1$. The latter is equivalent to
\begin{equation*}
\frac{2\lambda n+2\lambda\nu+2\alpha+2\lambda+1}{2\lambda n+2\lambda\nu+2\alpha+1}\gamma\leq\frac{\rho\psi_n-t_n}{s_n\psi_n}
\end{equation*}
or
\begin{equation*}
t_n\leq\left(\rho-\gamma s_n\frac{2\lambda n+2\lambda\nu+2\alpha+2\lambda+1}{2\lambda n+2\lambda\nu+2\alpha+1}\right)\psi_n.
\end{equation*}
Since
\begin{align*}
s_n\frac{2\lambda n+2\lambda\nu+2\alpha+2\lambda+1}{2\lambda n+2\lambda\nu+2\alpha+1}&=s_n\frac{(2\alpha+2\lambda+1)(2\lambda n+2\lambda\nu+2\alpha+1)-2\lambda(2\lambda n+2\lambda\nu)}{(2\alpha+1)(2\lambda n+2\lambda\nu+2\alpha+1)}\leq\\
&\leq s_n\frac{2\alpha+2\lambda+1}{2\alpha+1}<\\
&<\frac{2\alpha+2\lambda+1}{2\alpha+1}=\\
&=\frac{\rho}{\gamma},
\end{align*}
we obtain equivalence to
\begin{equation*}
\psi_n\geq\frac{t_n}{\rho-\gamma s_n\frac{2\lambda n+2\lambda\nu+2\alpha+2\lambda+1}{2\lambda n+2\lambda\nu+2\alpha+1}}.
\end{equation*}
Therefore, it is sufficient to establish that
\begin{equation*}
\frac{2\lambda n+2\lambda\nu+2\alpha+1}{2\lambda n+2\lambda\nu+2\alpha-2\lambda+1}\gamma\geq\frac{t_n}{\rho-\gamma s_n\frac{2\lambda n+2\lambda\nu+2\alpha+2\lambda+1}{2\lambda n+2\lambda\nu+2\alpha+1}}
\end{equation*}
or, equivalently,
\begin{equation*}
\frac{2\lambda n+2\lambda\nu+2\alpha-2\lambda+1}{2\lambda n+2\lambda\nu+2\alpha+1}\frac{t_n}{\gamma\rho}\leq1-\frac{\gamma}{\rho}s_n\frac{2\lambda n+2\lambda\nu+2\alpha+2\lambda+1}{2\lambda n+2\lambda\nu+2\alpha+1}.
\end{equation*}
Since the left-hand side of the latter inequality reduces to
\begin{equation*}
\frac{(n+\nu)(2\lambda n+2\lambda\nu+2\alpha-2\lambda+1)}{(2n+2\nu+2\alpha+2\lambda+1)(2\lambda n+2\lambda\nu+2\alpha+1)}
\end{equation*}
and the right hand side reduces to
\begin{align*}
&1-\frac{(n+\nu+2\alpha+1)(2\lambda n+2\lambda\nu+2\alpha+2\lambda+1)}{(2n+2\nu+2\alpha+2\lambda+1)(2\lambda n+2\lambda\nu+2\alpha+1)}=\\
&=\frac{(n+\nu)(2\lambda n+2\lambda\nu+2\alpha-2\lambda+1+4\lambda^2)}{(2n+2\nu+2\alpha+2\lambda+1)(2\lambda n+2\lambda\nu+2\alpha+1)},
\end{align*}
the induction is finished.
\end{proof}

\begin{proof}[Proof (Theorem~\ref{thm:pwamassocpoll})]
As (iv) is trivial from (iii), we only have to prove the first three assertions. For the case $\lambda=0$, which corresponds to the associated ultraspherical polynomials, the situation concerning weak amenability was completely clarified in \cite[Theorem 5.1]{Ka15} by studying the measure \eqref{eq:muassocpoll} and applying Theorem~\ref{thm:wamnec} and Theorem~\ref{thm:wamsuff}; the analogous situation concerning point and right character amenability was completely clarified in \cite[Theorem 5.1]{Ka15} and \cite[Section 3 p. 31]{Ka16b}, respectively. Observe that if $\lambda=0$, then the measure \eqref{eq:muassocpoll} simplifies because the parameters of the hypergeometric function do no longer depend on $x$ in this special case. Our strategy for $\lambda>0$ will be different and avoid a consideration of $\mu$; in fact, the strategy will be a modification of the special case $\nu=0$ (symmetric Pollaczek polynomials) considered in \cite[Theorem 4.1]{Ka15} and rely on Lemma~\ref{lma:assocpollrelation}, Lemma~\ref{lma:assymptoticrelation} and Lemma~\ref{lma:assocpollasymptotics}.\footnote{Earlier contributions to the special cases $\nu=0$ and $\lambda=0$ were made in \cite{Az10,FLS04,La07,La09a,LP10}.}\\

Let $\lambda>0$ from now on; it remains to show that $\ell^1(h)$ fails to be point amenable (which also rules out weak amenability and right character amenability). We distinguish two cases:\\

\textit{Case 1:} let
\begin{equation*}
\lambda\geq-\alpha-\nu-1+\frac{1}{2}\sqrt{4(\nu+1)(2\alpha+\nu+1)+1}.
\end{equation*}
Then $(2\alpha+2\lambda)^2+8\lambda+8\lambda\nu-1\geq0$ and therefore, due to \eqref{eq:assocpollrec},
\begin{align*}
\frac{1}{4}-c_n a_{n-1}&=\frac{1}{4}-\frac{(n+\nu)(n+\nu+2\alpha)}{(2n+2\nu+2\alpha+2\lambda+1)(2n+2\nu+2\alpha+2\lambda-1)}=\\
&=\frac{(2\alpha+2\lambda)^2+8\lambda n+8\lambda\nu-1}{4(2n+2\nu+2\alpha+2\lambda+1)(2n+2\nu+2\alpha+2\lambda-1)}\geq\\
&\geq\frac{(2\alpha+2\lambda)^2+8\lambda+8\lambda\nu-1}{4(2n+2\nu+2\alpha+2\lambda+1)(2n+2\nu+2\alpha+2\lambda-1)}\geq\\
&\geq0
\end{align*}
for each $n\in\mathbb{N}$. Hence, $\ell^1(h)$ is not point amenable as a consequence of Proposition~\ref{prp:kahlerpoint} (i).\\

\textit{Case 2:} let
\begin{equation*}
\lambda<-\alpha-\nu-1+\frac{1}{2}\sqrt{4(\nu+1)(2\alpha+\nu+1)+1}.
\end{equation*}
We first observe that $\lambda<\alpha+1/2$ in this case, which is an immediate consequence of the inequality
\begin{equation*}
4(\nu+1)(2\alpha+\nu+1)+1=(4\alpha+2\nu+3)^2-4(2\alpha+1)(2\alpha+\nu+1)<(4\alpha+2\nu+3)^2.
\end{equation*}
Now let
\begin{equation*}
a:=\frac{2\alpha+2\lambda+1}{2\alpha-2\lambda+1},\;b:=(2\alpha+1)\frac{2\alpha+2\lambda+1}{2\alpha-2\lambda+1}
\end{equation*}
and
\begin{align*}
S_n(x)&:=S_n^{(a,b,\nu)}(x)\;(n\in\mathbb{N}_0),\\
\widetilde{S_n}(x)&:=\widetilde{S_n^{(a,b,\nu)}}(x)\;(n\in\mathbb{N}_0).
\end{align*}
In the following, we use the notation of Lemma~\ref{lma:assocpollasymptotics}. Observe that, directly from the definition (Section~\ref{sec:rw}), $\widetilde{c_n^s}=t_n$ and $\widetilde{a_n^s}=s_n$ for all $n\in\mathbb{N}$. Moreover, we have
\begin{equation*}
P_n^{\prime}(0)=\rho\frac{S_n^{\prime}(0)}{S_n(\rho)}\;(n\in\mathbb{N}_0)
\end{equation*}
due to Lemma~\ref{lma:assocpollrelation}, and we have $\omega=\rho$. We now consider the asymptotic behavior and compare the decay of $S_n^{\prime}(0)$ with the growth of $1/S_n(\rho)$. We \textit{claim} that
\begin{enumerate}[A)]
\item $S_n^{\prime}(0)=\mathcal{O}(n\gamma^n)\;(n\to\infty)$,
\item $1/S_n(\rho)=\mathcal{O}(n^{-1}\gamma^{-n})\;(n\to\infty)$.
\end{enumerate}
Once the claim is established, we obtain that $\{P_n^{\prime}(0):n\in\mathbb{N}_0\}$ is bounded, so $\ell^1(h)$ fails to be point amenable due to Theorem~\ref{thm:lasserpoint}. The validity of the claimed assertions can be seen as follows:
\begin{enumerate}[A)]
\item The recurrence relation for $(S_n(x))_{n\in\mathbb{N}_0}$ yields $a_{2n-1}^s|S_{2n}(0)|=c_{2n-1}^s|S_{2n-2}(0)|\;(n\in\mathbb{N})$, so
\begin{equation}\label{eq:s2n0est}
|S_{2n}(0)|=\prod_{k=1}^n\frac{c_{2k-1}^s}{a_{2k-1}^s}\leq\prod_{k=1}^n\frac{\widetilde{c_{2k-1}^s}}{\widetilde{a_{2k-1}^s}}=\prod_{k=1}^n\frac{2k+\nu-1}{a(2k+\nu-1)+b}<\frac{1}{a^n}
\end{equation}
for all $n\in\mathbb{N}$, where we have used that $c_{2k-1}^s\leq\widetilde{c_{2k-1}^s}$ and $a_{2k-1}^s\geq\widetilde{a_{2k-1}^s}$ for all $k\in\{1,\ldots,n\}$ (see Section~\ref{sec:rw}). Moreover, for each $n\in\mathbb{N}$ we have
\begin{equation*}
x S_{2n}^{\prime}(x)+S_{2n}(x)=a_{2n}^s S_{2n+1}^{\prime}(x)+c_{2n}^s S_{2n-1}^{\prime}(x)
\end{equation*}
and consequently
\begin{align*}
a^n|S_{2n+1}^{\prime}(0)|&\leq\frac{a^n}{a_{2n}^s}|S_{2n}(0)|+\frac{a^n c_{2n}^s}{a_{2n}^s}|S_{2n-1}^{\prime}(0)|\leq\\
&\leq\frac{a^n}{\widetilde{a_{2n}^s}}|S_{2n}(0)|+\frac{a^n\widetilde{c_{2n}^s}}{\widetilde{a_{2n}^s}}|S_{2n-1}^{\prime}(0)|\leq\\
&\leq2a^n|S_{2n}(0)|+\frac{a^n(2n+\nu)}{a(2n+\nu)+b}|S_{2n-1}^{\prime}(0)|<\\
&<2+a^{n-1}|S_{2n-1}^{\prime}(0)|
\end{align*}
due to \eqref{eq:s2n0est}, so induction yields
\begin{equation*}
a^n|S_{2n+1}^{\prime}(0)|\leq2n+1\;(n\in\mathbb{N}_0).
\end{equation*}
Since $S_{2n}^{\prime}(0)\equiv0$ due to symmetry, and since $1/a=\gamma^2$, part A) is established.
\item Since $\omega=\rho$, Lemma~\ref{lma:assymptoticrelation} implies that $(S_n(\rho)/\widetilde{S_n(\rho)})_{n\in\mathbb{N}_0}$ converges to a positive real number. Hence, it suffices to prove that
\begin{equation}\label{eq:Bsimplified}
\frac{1}{\widetilde{S_n}(\rho)}=\mathcal{O}(n^{-1}\gamma^{-n})\;(n\to\infty).
\end{equation}
To do so, we use induction to show that
\begin{equation}\label{eq:Bsimplifiedind}
\widetilde{S_n}(\rho)=\prod_{k=1}^n\psi_k
\end{equation}
for all $n\in\mathbb{N}$ (where $(\psi_n)_{n\in\mathbb{N}}$ is as in Lemma~\ref{lma:assocpollasymptotics}). \eqref{eq:Bsimplifiedind} is obviously true for $n=1$, so let $n\in\mathbb{N}$ be arbitrary but fixed and assume that \eqref{eq:Bsimplifiedind} is satisfied for $1,\ldots,n$. Then
\begin{equation*}
\widetilde{S_{n+1}}(\rho)=\frac{\rho\frac{\widetilde{S_n}(\rho)}{\widetilde{S_{n-1}}(\rho)}-t_n}{s_n\frac{\widetilde{S_n}(\rho)}{\widetilde{S_{n-1}}(\rho)}}\prod_{k=1}^n\psi_k=\frac{\rho\psi_n-t_n}{s_n\psi_n}\prod_{k=1}^n\psi_k=\prod_{k=1}^{n+1}\psi_k.
\end{equation*}
Now combining \eqref{eq:Bsimplifiedind} and Lemma~\ref{lma:assocpollasymptotics}, we obtain
\begin{equation*}
\widetilde{S_n}(\rho)\geq\gamma^n\prod_{k=1}^n\frac{2\lambda k+2\lambda\nu+2\alpha+1}{2\lambda k+2\lambda\nu+2\alpha-2\lambda+1}=\frac{2\lambda n+2\lambda\nu+2\alpha+1}{2\lambda\nu+2\alpha+1}\gamma^n\;(n\in\mathbb{N}),
\end{equation*}
which establishes \eqref{eq:Bsimplified} and finishes the proof.
\end{enumerate}
\end{proof}

\begin{remark}
Concerning Case 2, we note that if $\nu=0$, then B) can be established without the use of Lemma~\ref{lma:assocpollasymptotics} via the following variant: \cite[Section 6, in part. (6.30)]{AI84} yields
\begin{equation*}
\frac{1}{S_n(\rho)}=\Theta(n^{\alpha+\frac{1}{4}}\gamma^{-n}e^{-\sqrt{8\lambda n}})\;(n\to\infty),
\end{equation*}
which obviously implies B) (cf. also \cite[Section 3.3]{Ka16b}). The cited ingredient \cite[(6.30)]{AI84} relies on Perron's formula in the complex plane \cite[Theorem 8.22.3]{Sz75} and is therefore considerably less elementary than our proof of Lemma~\ref{lma:assocpollasymptotics} given above (which, moreover, does not restrict to $\nu=0$). We are not aware of an explicit formula for $P_n^{\prime}(0)$.
\end{remark}

\begin{remark}
Note that in the proof of Theorem~\ref{thm:pwamassocpoll} the sequences $(S_n^{(a,b,\nu)}(x))_{n\in\mathbb{N}_0}$ and $(\widetilde{S_n^{(a,b,\nu)}}(x))_{n\in\mathbb{N}_0}$ served as auxiliary tools (to clarify the amenability properties which correspond to the associated symmetric Pollaczek polynomials) but that we have not yet tackled the amenability properties which correspond to these sequences themselves. The latter is much easier, however: assume that $(P_n(x))_{n\in\mathbb{N}_0}$ is $(S_n^{(a,b,\nu)}(x))_{n\in\mathbb{N}_0}$ or $(\widetilde{S_n^{(a,b,\nu)}}(x))_{n\in\mathbb{N}_0}$, with arbitrary $a>1$, $b>0$, $\nu\geq0$. Then Proposition~\ref{prp:kahlerpoint} (ii) immediately implies that $0$ admits a nonzero bounded point derivation; hence, $\ell^1(h)$ is not even point amenable. For the special case $\nu=0$, this was already observed in \cite[Proposition 4.1]{Ka15}.
\end{remark}

\section{Extended parameter region}\label{sec:extended}

The associated symmetric Pollaczek polynomials can be defined for larger parameter regions than considered in \cite{La94}, cf. \cite[Chapter VI 5]{Ch78}. We show the following:

\begin{theorem}\label{thm:extended}
Let $\lambda\geq0$, $\nu>0$ and
\begin{equation*}
\alpha\in\left(-\frac{1}{2}-\frac{\nu}{2},-\frac{1}{2}\right].
\end{equation*}
Then $c_0=0$ and \eqref{eq:assocpollrec} define a sequence $(c_n)_{n\in\mathbb{N}}\subseteq(0,1)$. If one defines $(P_n(x))_{n\in\mathbb{N}_0}=:(Q_n^{(\alpha,\lambda,\nu)}(x))_{n\in\mathbb{N}_0}$ via $a_n\equiv1-c_n$, the assertions of Theorem~\ref{thm:assocpollnonneg} and Corollary~\ref{cor:assocpollnonneg} remain valid. Moreover, $\ell^1(h)$ is not point amenable (and hence not weakly amenable, not right character amenable and not amenable).
\end{theorem}

\begin{proof}
Let $\phi:[1,\infty)\rightarrow(0,\infty)$ be defined as in \eqref{eq:phidef}, and let $\eta:\mathbb{R}\rightarrow\mathbb{R}$ be defined as in \eqref{eq:etadef}. Since
\begin{equation*}
\frac{1}{4}-\phi(x)=\frac{(2\alpha+2\lambda)^2+8\lambda x+8\lambda\nu-1}{4(2x+2\nu+2\alpha+2\lambda+1)(2x+2\nu+2\alpha+2\lambda-1)}
\end{equation*}
and
\begin{equation}\label{eq:assocpollextensionest}
(2\alpha+2\lambda)^2+8\lambda+8\lambda\nu-1\geq(2\alpha+2\lambda)^2+8\lambda+8\lambda(-2\alpha-1)-1=(2\alpha-2\lambda)^2-1\geq0,
\end{equation}
$\phi$ maps into $(0,1/4]$. Hence, Wall's comparison test \cite[Theorem III-5.7]{Ch78} yields that $(\phi(n))_{n\in\mathbb{N}}$ is a chain sequence with minimal parameter sequence $(c_n)_{n\in\mathbb{N}_0}$. In particular, $(c_n)_{n\in\mathbb{N}}\subseteq(0,1)$ is well-defined. We now show that $\phi$ is nondecreasing, which allows us to conclude as in Section~\ref{sec:Turan} that $(c_n)_{n\in\mathbb{N}}$ is strictly increasing and bounded from above by $1/2$ (so property (P) and Tur\'{a}n's inequality follow from Szwarc's criteria Theorem~\ref{thm:szwarcnonneg} and Theorem~\ref{thm:szwarcturan}). If $\lambda=0$, then
\begin{equation*}
\phi^{\prime}(x)=\frac{(2\alpha+1)(2\alpha-1)(2x+2\nu+2\alpha)}{(2x+2\nu+2\alpha+1)^2(2x+2\nu+2\alpha-1)^2}\geq0
\end{equation*}
for all $x\in[1,\infty)$. If $\lambda>0$, however, then \eqref{eq:assocpollextensionest} yields
\begin{align*}
\phi^{\prime}(x)&=\frac{\eta(x)}{8\lambda(2x+2\nu+2\alpha+2\lambda+1)^2(2x+2\nu+2\alpha+2\lambda-1)^2}=\\
&=\frac{(8\lambda(x+\nu)+(2\alpha+2\lambda)^2-1)^2-(1-(2\alpha-2\lambda)^2)(1-(2\alpha+2\lambda)^2)}{8\lambda(2x+2\nu+2\alpha+2\lambda+1)^2(2x+2\nu+2\alpha+2\lambda-1)^2}\geq\\
&\geq\frac{((2\alpha-2\lambda)^2-1)^2-(1-(2\alpha-2\lambda)^2)(1-(2\alpha+2\lambda)^2)}{8\lambda(2x+2\nu+2\alpha+2\lambda+1)^2(2x+2\nu+2\alpha+2\lambda-1)^2}=\\
&=-\frac{2\alpha((2\alpha-2\lambda)^2-1)}{(2x+2\nu+2\alpha+2\lambda+1)^2(2x+2\nu+2\alpha+2\lambda-1)^2}\geq\\
&\geq0
\end{align*}
for all $x\in[1,\infty)$. The proof of Corollary~\ref{cor:assocpollnonneg} can be copied. Finally, the failure of point amenability follows from $c_n a_{n-1}=\phi(n)\leq\frac{1}{4}\;(n\in\mathbb{N})$ and Proposition~\ref{prp:kahlerpoint} (i), or from \eqref{eq:assocpollextensionest} just as in Case 1 of the proof of Theorem~\ref{thm:pwamassocpoll}.
\end{proof}

\appendix

\section{Alternative proof of Theorem~\ref{thm:assocpollnonneg} via Tur\'{a}n's inequality for Laguerre polynomials}

As announced in Section~\ref{sec:Turan}, we consider Theorem~\ref{thm:assocpollnonneg} again and present an alternative proof for the subcase $0<\lambda<-|\alpha|+1/2$. This alternative proof avoids the transformation provided by Lemma~\ref{lma:assocpollrelation}; instead, it is based on Tur\'{a}n's inequality for the (renormalized) Laguerre polynomials $\left(\frac{L_n^{(2\alpha,0)}(x)}{L_n^{(2\alpha,0)}(0)}\right)_{n\in\mathbb{N}_0}$ (cf. the proof of Lemma~\ref{lma:cest} below) and on the behavior of $\phi:[1,\infty)\rightarrow(0,\infty)$ given by \eqref{eq:phidef}. We first introduce some additional notation and further auxiliary functions. Let $\alpha>-1/2$, $\lambda\geq0$ and $\nu\geq0$. In the following, we use an additional superscript ``$(\nu=0)$'' when referring to $\nu=0$, with the remaining parameters $\alpha$ and $\lambda$ unchanged. More precisely, while $(c_n)_{n\in\mathbb{N}}$ refers to $(Q_n^{(\alpha,\lambda,\nu)}(x))_{n\in\mathbb{N}_0}$, $(c_n^{(\nu=0)})_{n\in\mathbb{N}}$ refers to $(Q_n^{(\alpha,\lambda,0)}(x))_{n\in\mathbb{N}_0}$, and so on. In the same way, we use the superscripts ``$(\lambda=0)$'' and ``$(\lambda=\nu=0)$''. One has
\begin{equation}\label{eq:cultrasph}
c_n^{(\lambda=\nu=0)}=\frac{n}{2n+2\alpha+1}\;(n\in\mathbb{N})
\end{equation}
\cite{La94}. Observe that
\begin{align*}
\phi^{(\lambda=0)}(x)-\phi(x)&=\frac{1}{(2x+2\nu+2\alpha+1)(2x+2\nu+2\alpha-1)}\\
&\quad\times\frac{4\lambda(x+\nu)(x+\nu+2\alpha)(2x+2\nu+2\alpha+\lambda)}{(2x+2\nu+2\alpha+2\lambda+1)(2x+2\nu+2\alpha+2\lambda-1)}
\end{align*}
for all $x\in[1,\infty)$, which implies the identity
\begin{equation}\label{eq:phiassocultrasphest}
\phi(x)\leq\phi^{(\lambda=0)}(x)\;(x\in[1,\infty)).
\end{equation}

From now on, let $0<\lambda<-|\alpha|+1/2$.\\

Since $|\alpha|<1/2-\lambda<1/2+\lambda$, we have
\begin{equation*}
(1-(2\alpha-2\lambda)^2)(1-(2\alpha+2\lambda)^2)=((1+2\lambda)^2-4\alpha^2)((1-2\lambda)^2-4\alpha^2)>0.
\end{equation*}
For each $x\in[1,\infty)$, we compute
\begin{align*}
\phi^{(\nu=0)}(x)-\phi(x)&=-\frac{\nu}{8\lambda}\frac{1}{(2x+2\alpha+2\lambda+1)(2x+2\alpha+2\lambda-1)}\\
&\quad\times\frac{1}{(2x+2\nu+2\alpha+2\lambda+1)(2x+2\nu+2\alpha+2\lambda-1)}\theta(x)
\end{align*}
with $\theta:\mathbb{R}\rightarrow\mathbb{R}$,
\begin{equation*}
\theta(x):=\left(8\lambda\left(x+\frac{\nu}{2}\right)+(2\alpha+2\lambda)^2-1\right)^2-(1-(2\alpha-2\lambda)^2)(1-(2\alpha+2\lambda)^2)-16\lambda^2\nu^2.
\end{equation*}
The zeros of $\eta$ \eqref{eq:etadef} and $\theta$ are given by
\begin{equation*}
-\nu+\frac{1-(2\alpha+2\lambda)^2\pm\sqrt{(1-(2\alpha-2\lambda)^2)(1-(2\alpha+2\lambda)^2)}}{8\lambda}
\end{equation*}
and
\begin{equation*}
-\frac{\nu}{2}+\frac{1-(2\alpha+2\lambda)^2\pm\sqrt{(1-(2\alpha-2\lambda)^2)(1-(2\alpha+2\lambda)^2)+16\lambda^2\nu^2}}{8\lambda},
\end{equation*}
respectively. Observe that
\begin{equation}\label{eq:estsmallerzeros}
\frac{1-(2\alpha+2\lambda)^2-\sqrt{(1-(2\alpha-2\lambda)^2)(1-(2\alpha+2\lambda)^2)}}{8\lambda}<1.
\end{equation}
This follows because
\begin{align*}
&(1-(2\alpha-2\lambda)^2)(1-(2\alpha+2\lambda)^2)-(1-(2\alpha+2\lambda)^2-8\lambda)^2=\\
&=16(\alpha+1)\lambda\left(1-(2\alpha+2\lambda)^2-\frac{4\lambda}{\alpha+1}\right)>\\
&>16(\alpha+1)\lambda(1-(2\alpha+2\lambda)^2-8\lambda);
\end{align*}
hence, if $1-(2\alpha+2\lambda)^2-8\lambda\geq0$, then $1-(2\alpha+2\lambda)^2-8\lambda<\sqrt{(1-(2\alpha-2\lambda)^2)(1-(2\alpha+2\lambda)^2)}$. Now let
\begin{equation*}
x_{\ast}:=-\nu+\frac{1-(2\alpha+2\lambda)^2+\sqrt{(1-(2\alpha-2\lambda)^2)(1-(2\alpha+2\lambda)^2)}}{8\lambda}
\end{equation*}
and
\begin{equation*}
x_{\ast\ast}:=-\frac{\nu}{2}+\frac{1-(2\alpha+2\lambda)^2+\sqrt{(1-(2\alpha-2\lambda)^2)(1-(2\alpha+2\lambda)^2)+16\lambda^2\nu^2}}{8\lambda}.
\end{equation*}
As a consequence of \eqref{eq:estsmallerzeros}, we see that $\phi^{\prime}$ has at last one zero, and the potential zero is given by $x_{\ast}$. Moreover, \eqref{eq:estsmallerzeros} yields that either $\phi^{(\nu=0)}-\phi\equiv0$ (which corresponds to the special case $\nu=0$) or $\phi^{(\nu=0)}-\phi$ has at last one zero; in the latter case, the potential zero is given by $x_{\ast\ast}$. Obviously, one has $x_{\ast\ast}\geq x_{\ast}$ (with equality if and only if $\nu=0$, i.e., in the symmetric Pollaczek case) and $x_{\ast\ast}>0$.\\

As a consequence of the preceding observations, we see that
\begin{equation}\label{eq:phipollest}
\phi(x)\leq\phi^{(\nu=0)}(x)\;(x\in[1,x_{\ast\ast}])
\end{equation}
if $x_{\ast\ast}\geq1$.\\

Assume that $x_{\ast}>1$. The monotonicity and limiting behavior of $\phi$ shows that $\phi(x_{\ast})<1/4$, and there is a (uniquely determined) $x_0\in[1,x_{\ast})$ such that $\phi(x)>1/4$ for all $x<x_0$ and $\phi(x)\leq1/4$ for all $x\geq x_0$. $\phi$ is strictly decreasing on $[1,x_{\ast}]$ and strictly increasing on $[x_{\ast},\infty)$. Hence, $\phi(x)<1/4$ for all $x\in(x_0,\infty)$. Defining $\xi:[x_0,\infty)\rightarrow(0,1/2]$ via
\begin{equation}\label{eq:xidef}
\xi(x):=\frac{1}{2}(1-\sqrt{1-4\phi(x)}),
\end{equation}
we have
\begin{equation*}
\xi^{\prime}(x)=\frac{\phi^{\prime}(x)}{\sqrt{1-4\phi(x)}}
\end{equation*}
for all $x\in(x_0,\infty)$; consequently, $\xi$ is strictly decreasing on $[x_0,x_{\ast}]$ and strictly increasing on $[x_{\ast},\infty)$.\\

Finally, let $\lambda\geq0$ be arbitrary again and $\iota:[1,\infty)\rightarrow\mathbb{R}$ be defined by
\begin{equation*}
\iota(x):=\frac{1}{2}\left(1-\frac{\sqrt{8\lambda x+(2\alpha+2\lambda+1)^2}}{2x+2\alpha+2\lambda+1}\right).
\end{equation*}
$\iota^{\prime}$ is given by
\begin{equation*}
\iota^{\prime}(x)=\frac{4\lambda x+4\left(\alpha+\lambda+\frac{1}{2}\right)\left(\alpha+\frac{1}{2}\right)}{(2x+2\alpha+2\lambda+1)^2\sqrt{8\lambda x+(2\alpha+2\lambda+1)^2}},
\end{equation*}
which shows that $\iota$ is strictly increasing.

\begin{lemma}\label{lma:cest}
Let $\alpha>-1/2$, $\lambda,\nu\geq0$ and $P_n(x)=Q_n^{(\alpha,\lambda,\nu)}(x)\;(n\in\mathbb{N}_0)$. Then the following hold:
\begin{enumerate}[(i)]
\item $c_n\leq c_n^{(\lambda=0)}$ for all $n\in\mathbb{N}_0$,
\item if $0<\lambda<-|\alpha|+1/2$, then $c_n\leq c_n^{(\nu=0)}$ for all $n\in\{0,\ldots,\lfloor x_{\ast\ast}\rfloor\}$,
\item $c_n^{(\nu=0)}\leq\iota(n)$ for all $n\in\mathbb{N}$. If $\lambda>0$, then the inequality is always strict, and if $\lambda=0$, then one always has equality.
\end{enumerate}
\end{lemma}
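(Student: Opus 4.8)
The common engine for all three parts is the Riccati-type recurrence \eqref{eq:assocpollrecmod}, i.e.\ $c_n=\phi(n)/(1-c_{n-1})$ with $c_0=0$, combined with the following elementary monotonicity principle. Suppose $(u_n)_{n\in\mathbb{N}_0}$ and $(v_n)_{n\in\mathbb{N}_0}$ are generated from nonnegative weights $\Phi,\Psi$ by $u_0=v_0=0$, $u_n=\Phi(n)/(1-u_{n-1})$, $v_n=\Psi(n)/(1-v_{n-1})$, and that all terms lie in $[0,\tfrac12)$. If $\Phi(k)\leq\Psi(k)$ for $k=1,\dots,n$, then $u_n\leq v_n$; this is immediate by induction, since $u_{k-1}\leq v_{k-1}<1$ gives $1/(1-u_{k-1})\leq1/(1-v_{k-1})$, and multiplying by $\Phi(k)\leq\Psi(k)$ (all factors nonnegative) yields $u_k\leq v_k$. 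By Theorem~\ref{thm:assocpollnonneg} each of the sequences $(c_n)$, $(c_n^{(\lambda=0)})$, $(c_n^{(\nu=0)})$ starts at $0$ and stays in $[0,\tfrac12)$, so the principle applies to each relevant pair.

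Parts (i) and (ii) are then pure bookkeeping. For (i) I would feed the principle with $\Phi=\phi$ and $\Psi=\phi^{(\lambda=0)}$: the pointwise bound $\phi(x)\leq\phi^{(\lambda=0)}(x)$ on $[1,\infty)$ recorded in \eqref{eq:phiassocultrasphest} gives $c_n\leq c_n^{(\lambda=0)}$ for all $n$. For (ii) the analogous bound $\phi(x)\leq\phi^{(\nu=0)}(x)$ holds only on $[1,x_{\ast\ast}]$ by \eqref{eq:phipollest}, so the principle applies for the indices $n=1,\dots,\lfloor x_{\ast\ast}\rfloor$, each of which lies in $[1,x_{\ast\ast}]$, and yields $c_n\leq c_n^{(\nu=0)}$ on $\{0,\dots,\lfloor x_{\ast\ast}\rfloor\}$. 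If $x_{\ast\ast}<1$, then $\lfloor x_{\ast\ast}\rfloor=0$ (recall $x_{\ast\ast}>0$) and the claim degenerates to the trivial $c_0=c_0^{(\nu=0)}=0$.

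The substantive part is (iii), where $\iota$ is not a comparison sequence coming from a dominating weight but a \emph{super-solution} of the $\nu=0$ recurrence. I would prove $c_n^{(\nu=0)}<\iota(n)$ by induction from $\iota(0)=0=c_0^{(\nu=0)}$, the inductive engine being the key inequality
\begin{equation*}
\iota(x)\bigl(1-\iota(x-1)\bigr)>\phi^{(\nu=0)}(x)\qquad(x\geq1,\ \lambda>0),
\end{equation*}
which propagates exactly as above: $c_{n-1}^{(\nu=0)}\leq\iota(n-1)<1$ gives $c_n^{(\nu=0)}=\phi^{(\nu=0)}(n)/(1-c_{n-1}^{(\nu=0)})\leq\phi^{(\nu=0)}(n)/(1-\iota(n-1))<\iota(n)$. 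Writing $d(x):=2x+2\alpha+2\lambda+1$ and $g(x):=\sqrt{8\lambda x+(2\alpha+2\lambda+1)^2}$, so that $\iota(x)=\tfrac12\bigl(1-g(x)/d(x)\bigr)$, $d(x-1)=d(x)-2$ and $g(x-1)=\sqrt{g(x)^2-8\lambda}$, one clears the positive denominator $d(x)d(x-1)$ (note $\phi^{(\nu=0)}(x)=x(x+2\alpha)/(d(x)d(x-1))$) and, after using the difference-of-squares identities $g(x)^2-(2\alpha+2\lambda+1)^2=8\lambda x$ and $d(x)^2-g(x)^2=4x(x+2\alpha+1)$, reduces the displayed inequality to
\begin{equation*}
\bigl(g(x)+g(x-1)\bigr)\bigl(d(x)+g(x)\bigr)>2(x+2\alpha+1)\bigl(g(x)+2\alpha+2\lambda+1\bigr).
\end{equation*}

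This reduced inequality now factors through two elementary facts. First, $g(x-1)\geq2\alpha+2\lambda+1$ for $x\geq1$, since $g(x-1)^2-(2\alpha+2\lambda+1)^2=8\lambda(x-1)\geq0$; this bounds the first left-hand factor below by $g(x)+2\alpha+2\lambda+1$. Second, $d(x)+g(x)>2(x+2\alpha+1)$, i.e.\ $g(x)>2\alpha-2\lambda+1$, because $g(x)^2-(2\alpha-2\lambda+1)^2=8\lambda(x+2\alpha+1)>0$ for $x\geq0$ and $\alpha>-\tfrac12$. Multiplying the two bounds gives the reduced inequality strictly, hence (iii) for $\lambda>0$; the boundary case $\lambda=0$ is handled separately, where $\iota(n)=n/(2n+2\alpha+1)$ is exactly the explicit ultraspherical coefficient \eqref{eq:cultrasph}. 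I expect the only real obstacle to be organizing the algebra of (iii) into the clean two-factor form above — in particular recognizing that each of the square-root quantities $g(x)$ and $g(x-1)$ can be compared with a linear expression via a difference of squares that collapses to $8\lambda$ times a manifestly positive term — after which the estimate is transparent; (i) and (ii) require no computation beyond invoking \eqref{eq:phiassocultrasphest} and \eqref{eq:phipollest}.
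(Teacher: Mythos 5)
Parts (i) and (ii) of your proposal coincide with the paper's proof (the same inductive comparison through \eqref{eq:assocpollrecmod} combined with \eqref{eq:phiassocultrasphest} resp.\ \eqref{eq:phipollest}), and your super-solution strategy for (iii) is a genuinely different and more elementary route than the paper's, which instead uses Tur\'{a}n's inequality for Laguerre polynomials via \eqref{eq:cassocpoll} to bound the ratio $c_{n+1}^{(\nu=0)}/c_n^{(\nu=0)}$ from below and then reads off $c_n^{(\nu=0)}<\iota(n)$ from a quadratic inequality at each fixed $n$, with no induction at all. However, your execution of (iii) contains a genuine algebra gap: the reduction of the key inequality is wrong. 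Write $d=d(x)$, $g=g(x)$, $g_-=g(x-1)$, $d_-=d(x-1)=d-2$. Clearing the denominator $4dd_->0$ turns $\iota(x)(1-\iota(x-1))>\phi^{(\nu=0)}(x)$ into $(d-g)(d_-+g_-)>4x(x+2\alpha)$; using $(d-g)(d+g)=4x(x+2\alpha+1)$ this is equivalent to $(d-g)(2+g-g_-)<4x$, and then, since $g-g_-=8\lambda/(g+g_-)$, to
\begin{equation*}
(d+g)(g+g_-)>2(x+2\alpha+1)\bigl(g+g_-+4\lambda\bigr),
\end{equation*}
and \emph{not} to your displayed inequality $(g+g_-)(d+g)>2(x+2\alpha+1)(g+2\alpha+2\lambda+1)$. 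Indeed, by your own first fact $g_-\geq2\alpha+2\lambda+1$, so for $\lambda>0$ one has $g_-+4\lambda>2\alpha+2\lambda+1$ and your display is strictly weaker than the correct equivalent; multiplying your two facts proves exactly this weak display and nothing more, so the inductive engine of your proof of (iii) is left unproven as written.

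The gap is repairable within your framework: subtract $2(x+2\alpha+1)(g+g_-)$ from both sides of the correct equivalent and use $d+g-2(x+2\alpha+1)=g-(2\alpha-2\lambda+1)$ together with $g^2-(2\alpha-2\lambda+1)^2=8\lambda(x+2\alpha+1)$ (the content of your second fact). The inequality then becomes $(g+g_-)\bigl(g-(2\alpha-2\lambda+1)\bigr)>\bigl(g-(2\alpha-2\lambda+1)\bigr)\bigl(g+(2\alpha-2\lambda+1)\bigr)$, and since $g-(2\alpha-2\lambda+1)>0$ for $\lambda>0$ it reduces precisely to $g_->2\alpha-2\lambda+1$, which holds because $g_-^2-(2\alpha-2\lambda+1)^2=8\lambda(x+2\alpha)>0$ for $\lambda>0$, $x\geq1$, $\alpha>-\frac{1}{2}$. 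With this replacement your induction closes. One further caveat: at $\lambda=0$ one has $\iota(n)=n/(2n+2\alpha+1)=c_n^{(\nu=0)}$ exactly by \eqref{eq:cultrasph}, so the strict inequality in (iii) actually \emph{fails} there rather than being ``handled separately''; strictness genuinely requires $\lambda>0$ (the paper's argument carries the same implicit restriction, since the Tur\'{a}n inequality for the normalized Laguerre values degenerates to an equality at the origin, and the lemma is only invoked with $\lambda>0$).
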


\begin{proof}
\begin{enumerate}[(i)]
\item We use induction on $n$: the case $n=0$ is clear. If $n\in\mathbb{N}_0$ is arbitrary but fixed and $c_n\leq c_n^{(\lambda=0)}$, then, as a consequence of \eqref{eq:assocpollrecmod} and \eqref{eq:phiassocultrasphest},
\begin{equation*}
c_{n+1}=\frac{\phi(n+1)}{1-c_n}\leq\frac{\phi^{(\lambda=0)}(n+1)}{1-c_n}\leq\frac{\phi^{(\lambda=0)}(n+1)}{1-c_n^{(\lambda=0)}}=c_{n+1}^{(\lambda=0)}.
\end{equation*}
\item Since the case $x_{\ast\ast}<1$ is trivial, we may assume that $x_{\ast\ast}\geq1$. Then \eqref{eq:phipollest} yields
\begin{equation*}
\phi(n)\leq\phi^{(\nu=0)}(n)\;(n\in\{1,\ldots,\lfloor x_{\ast\ast}\rfloor\}).
\end{equation*}
Now the assertion follows as in (i): the case $n=0$ is clear, and if $n\in\{0,\ldots,\lfloor x_{\ast\ast}\rfloor-1\}$ is arbitrary but fixed and $c_n\leq c_n^{(\nu=0)}$, then, as a consequence of \eqref{eq:assocpollrecmod},
\begin{equation*}
c_{n+1}=\frac{\phi(n+1)}{1-c_n}\leq\frac{\phi^{(\nu=0)}(n+1)}{1-c_n}\leq\frac{\phi^{(\nu=0)}(n+1)}{1-c_n^{(\nu=0)}}=c_{n+1}^{(\nu=0)}.
\end{equation*}
\item (i) and \eqref{eq:cultrasph} yield that
\begin{equation}\label{eq:csmalleronehalf}
c_n^{(\nu=0)}\leq c_n^{(\lambda=\nu=0)}<\frac{1}{2}\;(n\in\mathbb{N}).
\end{equation}
Let $\lambda>0$. Via \eqref{eq:assocpollrecmod} and \eqref{eq:estimationforappendix} (which was obtained in Section~\ref{sec:Turan} as a consequence of Tur\'{a}n's inequality for Laguerre polynomials \eqref{eq:LaguerreTuranfirst}, \eqref{eq:LaguerreTuransecond}), we have
\begin{align*}
\frac{(n+1)(2n+2\alpha+2\lambda+1)}{n(2n+2\alpha+2\lambda+3)}c_n^{(\nu=0)}&<c_{n+1}^{(\nu=0)}=\\
&=\frac{\phi^{(\nu=0)}(n+1)}{1-c_n^{(\nu=0)}}=\\
&=\frac{(n+1)(n+2\alpha+1)}{(2n+2\alpha+2\lambda+3)(2n+2\alpha+2\lambda+1)}\frac{1}{1-c_n^{(\nu=0)}}
\end{align*}
and therefore
\begin{equation*}
(c_n^{(\nu=0)})^2-c_n^{(\nu=0)}+\underbrace{\frac{n(n+2\alpha+1)}{(2n+2\alpha+2\lambda+1)^2}}_{\in\left(0,\frac{1}{4}\right)}>0.
\end{equation*}
Due to \eqref{eq:csmalleronehalf}, we now can conclude that
\begin{equation*}
c_n^{(\nu=0)}<\frac{1}{2}\left(1-\sqrt{1-4\frac{n(n+2\alpha+1)}{(2n+2\alpha+2\lambda+1)^2}}\right)=\iota(n)
\end{equation*}
for all $n\in\mathbb{N}$. The case $\lambda=0$ can be shown in the same way.
\end{enumerate}
\end{proof}

\begin{proof}[Proof (Theorem~\ref{thm:assocpollnonneg}, variant for the case $0<\lambda<-|\alpha|+1/2$)]
Let $0<\lambda<-|\alpha|+1/2$ (these conditions particularly imply that $|\alpha|<1/2$). If $x_{\ast}\leq1$, we have $\eta(x)>0$ for all $x\in(1,\infty)$, so $\phi$ is strictly increasing; consequently, $(c_n)_{n\in\mathbb{N}}$ is strictly increasing (cf. above). Hence, it remains to consider the case $x_{\ast}>1$, which shall be assumed from now on. We have to show that $(c_n)_{n\in\mathbb{N}}$ is strictly increasing again. Let $x_0$ and the function $\xi:[x_0,\infty)\rightarrow(0,1/2]$ be defined as above \eqref{eq:xidef}. The proof will be done in three steps:\\

\textit{Step 1:} we show that
\begin{equation}\label{eq:crucialcest}
c_{\lfloor x_{\ast}\rfloor}<\xi(x_{\ast}).
\end{equation}
To establish \eqref{eq:crucialcest}, we first compute
\begin{align*}
64\lambda^2((x_{\ast}+\nu+\alpha)^2-\alpha^2)&=(8\lambda x_{\ast}+8\lambda\nu+8\alpha\lambda)^2-64\alpha^2\lambda^2=\\
&=(1-4\alpha^2-4\lambda^2+\sqrt{(1-(2\alpha-2\lambda)^2)(1-(2\alpha+2\lambda)^2)})^2-64\alpha^2\lambda^2,
\end{align*}
\begin{align*}
0&<64\lambda^2((2x_{\ast}+2\nu+2\alpha+2\lambda)^2-1)=\\
&=4(8\lambda x_{\ast}+8\lambda\nu+8\alpha\lambda+8\lambda^2)^2-64\lambda^2=\\
&=4(1-4\alpha^2+4\lambda^2+\sqrt{(1-(2\alpha-2\lambda)^2)(1-(2\alpha+2\lambda)^2)})^2-64\lambda^2
\end{align*}
and consequently
\begin{align*}
\phi(x_{\ast})&=\frac{(x_{\ast}+\nu)(x_{\ast}+\nu+2\alpha)}{(2x_{\ast}+2\nu+2\alpha+2\lambda+1)(2x_{\ast}+2\nu+2\alpha+2\lambda-1)}=\\
&=\frac{(x_{\ast}+\nu+\alpha)^2-\alpha^2}{(2x_{\ast}+2\nu+2\alpha+2\lambda)^2-1}=\\
&=\frac{(1-4\alpha^2-4\lambda^2+\sqrt{(1-(2\alpha-2\lambda)^2)(1-(2\alpha+2\lambda)^2)})^2-64\alpha^2\lambda^2}{4(1-4\alpha^2+4\lambda^2+\sqrt{(1-(2\alpha-2\lambda)^2)(1-(2\alpha+2\lambda)^2)})^2-64\lambda^2},
\end{align*}
which yields that
\begin{equation}\label{eq:longcalcfirst}
1-4\phi(x_{\ast})=\frac{16\lambda^2\sqrt{(1-(2\alpha-2\lambda)^2)(1-(2\alpha+2\lambda)^2)}}{(1-4\alpha^2+4\lambda^2+\sqrt{(1-(2\alpha-2\lambda)^2)(1-(2\alpha+2\lambda)^2)})^2-16\lambda^2};
\end{equation}
the denominator in \eqref{eq:longcalcfirst} is positive. Next, we compute
\begin{equation*}
8\lambda x_{\ast}^{(\nu=0)}+(2\alpha+2\lambda+1)^2=4\alpha+4\lambda+2+\sqrt{(1-(2\alpha-2\lambda)^2)(1-(2\alpha+2\lambda)^2)}
\end{equation*}
and (since $x_{\ast}^{(\nu=0)}\geq x_{\ast}>1$)
\begin{equation*}
0<(2x_{\ast}^{(\nu=0)}+2\alpha+2\lambda+1)^2=\frac{(1-4\alpha^2+4\lambda^2+4\lambda+\sqrt{(1-(2\alpha-2\lambda)^2)(1-(2\alpha+2\lambda)^2)})^2}{16\lambda^2},
\end{equation*}
which implies that
\begin{equation}\label{eq:longcalcsecond}
\frac{8\lambda x_{\ast}^{(\nu=0)}+(2\alpha+2\lambda+1)^2}{(2x_{\ast}^{(\nu=0)}+2\alpha+2\lambda+1)^2}=\frac{16\lambda^2(4\alpha+4\lambda+2+\sqrt{(1-(2\alpha-2\lambda)^2)(1-(2\alpha+2\lambda)^2)})}{(1-4\alpha^2+4\lambda^2+4\lambda+\sqrt{(1-(2\alpha-2\lambda)^2)(1-(2\alpha+2\lambda)^2)})^2};
\end{equation}
the denominator in \eqref{eq:longcalcsecond} is positive. Since
\begin{align*}
&(4\alpha+4\lambda+2+\sqrt{(1-(2\alpha-2\lambda)^2)(1-(2\alpha+2\lambda)^2)})\\
&\quad\quad\times((1-4\alpha^2+4\lambda^2+\sqrt{(1-(2\alpha-2\lambda)^2)(1-(2\alpha+2\lambda)^2)})^2-16\lambda^2)\\
&\quad-\sqrt{(1-(2\alpha-2\lambda)^2)(1-(2\alpha+2\lambda)^2)}\\
&\quad\quad\quad\times(1-4\alpha^2+4\lambda^2+4\lambda+\sqrt{(1-(2\alpha-2\lambda)^2)(1-(2\alpha+2\lambda)^2)})^2=\\
&=4((1+2\alpha)^2-4\lambda^2)\\
&\quad\times((1-2\alpha)\sqrt{(1-(2\alpha-2\lambda)^2)(1-(2\alpha+2\lambda)^2)}+(2\alpha+1)((1-2\alpha)^2-4\lambda^2))=\\
&=4((1+2\alpha)^2-4\lambda^2)\\
&\quad\times((1-2\alpha)\sqrt{((1+2\alpha)^2-4\lambda^2)((1-2\alpha)^2-4\lambda^2)}+(2\alpha+1)((1-2\alpha)^2-4\lambda^2))>\\
&>0,
\end{align*}
where we have used that $2\lambda<1+2\alpha$, $\alpha<1/2$ and $2\lambda<1-2\alpha$, \eqref{eq:longcalcfirst} and \eqref{eq:longcalcsecond} yield
\begin{equation*}
0<1-4\phi(x_{\ast})<\frac{8\lambda x_{\ast}^{(\nu=0)}+(2\alpha+2\lambda+1)^2}{(2x_{\ast}^{(\nu=0)}+2\alpha+2\lambda+1)^2}.
\end{equation*}
Combining the latter inequality with Lemma~\ref{lma:cest} (and taking into account that $x_{\ast}\leq x_{\ast\ast}$), we can conclude that
\begin{equation*}
\xi(x_{\ast})>\iota(x_{\ast}^{(\nu=0)})\geq\iota(x_{\ast})\geq\iota(\lfloor x_{\ast}\rfloor)>c_{\lfloor x_{\ast}\rfloor}^{(\nu=0)}\geq c_{\lfloor x_{\ast}\rfloor},
\end{equation*}
which establishes \eqref{eq:crucialcest}.\\

\textit{Step 2:} we now use induction on $n$ to show that
\begin{equation}\label{eq:cestindfirst}
c_{\lfloor x_{\ast}\rfloor+n}<c_{\lfloor x_{\ast}\rfloor+n+1}
\end{equation}
for all $n\in\mathbb{N}_0$. As a consequence of Step 1, we have $c_{\lfloor x_{\ast}\rfloor}<\xi(\lfloor x_{\ast}\rfloor+1)$ and therefore, due to \eqref{eq:assocpollrecmod},
\begin{equation*}
(c_{\lfloor x_{\ast}\rfloor+1}-c_{\lfloor x_{\ast}\rfloor})(1-c_{\lfloor x_{\ast}\rfloor})=c_{\lfloor x_{\ast}\rfloor}^2-c_{\lfloor x_{\ast}\rfloor}+\phi(\lfloor x_{\ast}\rfloor+1)>0.
\end{equation*}
This establishes \eqref{eq:cestindfirst} for $n=0$. Now let $n\in\mathbb{N}_0$ be arbitrary but fixed and assume that \eqref{eq:cestindfirst} is true for $n$. Then
\begin{equation*}
c_{\lfloor x_{\ast}\rfloor+n+2}=\frac{\phi(\lfloor x_{\ast}\rfloor+n+2)}{1-c_{\lfloor x_{\ast}\rfloor+n+1}}>\frac{\phi(\lfloor x_{\ast}\rfloor+n+1)}{1-c_{\lfloor x_{\ast}\rfloor+n+1}}>\frac{\phi(\lfloor x_{\ast}\rfloor+n+1)}{1-c_{\lfloor x_{\ast}\rfloor+n}}=c_{\lfloor x_{\ast}\rfloor+n+1}
\end{equation*}
by \eqref{eq:assocpollrecmod}, so \eqref{eq:cestindfirst} is also valid for $n+1$.\\

\textit{Step 3:} finally, we use induction on $n$ to show that
\begin{equation}\label{eq:cestindsecond}
c_{\lfloor x_{\ast}\rfloor-n-1}<c_{\lfloor x_{\ast}\rfloor-n}
\end{equation}
for all $n\in\{0,\ldots,\lfloor x_{\ast}\rfloor-1\}$. Concerning the initial step, we distinguish two cases: if $\lfloor x_{\ast}\rfloor\geq x_0$, then $\xi(\lfloor x_{\ast}\rfloor)$ is defined and we have $c_{\lfloor x_{\ast}\rfloor}<\xi(\lfloor x_{\ast}\rfloor)$ as a consequence of Step 1. Hence, we get
\begin{align*}
(c_{\lfloor x_{\ast}\rfloor}-c_{\lfloor x_{\ast}\rfloor-1})(1-c_{\lfloor x_{\ast}\rfloor-1})&=c_{\lfloor x_{\ast}\rfloor-1}^2-c_{\lfloor x_{\ast}\rfloor-1}+\phi(\lfloor x_{\ast}\rfloor)=\\
&=\frac{\phi(\lfloor x_{\ast}\rfloor)}{c_{\lfloor x_{\ast}\rfloor}^2}(c_{\lfloor x_{\ast}\rfloor}^2-c_{\lfloor x_{\ast}\rfloor}+\phi(\lfloor x_{\ast}\rfloor))>\\
&>0,
\end{align*}
and therefore \eqref{eq:cestindsecond} is valid for $n=0$. If, however, $\lfloor x_{\ast}\rfloor<x_0$, then $\phi(\lfloor x_{\ast}\rfloor)>1/4$ and therefore
\begin{align*}
(c_{\lfloor x_{\ast}\rfloor}-c_{\lfloor x_{\ast}\rfloor-1})(1-c_{\lfloor x_{\ast}\rfloor-1})&=c_{\lfloor x_{\ast}\rfloor-1}^2-c_{\lfloor x_{\ast}\rfloor-1}+\phi(\lfloor x_{\ast}\rfloor)>\\
&>c_{\lfloor x_{\ast}\rfloor-1}^2-c_{\lfloor x_{\ast}\rfloor-1}+\frac{1}{4}=\\
&=\left(c_{\lfloor x_{\ast}\rfloor-1}-\frac{1}{2}\right)^2\geq\\
&\geq0,
\end{align*}
which also yields that \eqref{eq:cestindsecond} is valid for $n=0$. Now let $n\in\{0,\ldots,\lfloor x_{\ast}\rfloor-1\}$ be arbitrary but fixed and assume that \eqref{eq:cestindsecond} is true for $n$ and that $n+1\in\{0,\ldots,\lfloor x_{\ast}\rfloor-1\}$. Then
\begin{equation*}
c_{\lfloor x_{\ast}\rfloor-n-2}=1-\frac{\phi(\lfloor x_{\ast}\rfloor-n-1)}{c_{\lfloor x_{\ast}\rfloor-n-1}}<1-\frac{\phi(\lfloor x_{\ast}\rfloor-n)}{c_{\lfloor x_{\ast}\rfloor-n-1}}<1-\frac{\phi(\lfloor x_{\ast}\rfloor-n)}{c_{\lfloor x_{\ast}\rfloor-n}}=c_{\lfloor x_{\ast}\rfloor-n-1},
\end{equation*}
so \eqref{eq:cestindsecond} is also valid for $n+1$.\\

Combining Step 2 and Step 3, we obtain that $(c_n)_{n\in\mathbb{N}}$ is strictly increasing.
\end{proof}

\bibliography{bibliographyassocpollaczek}
\bibliographystyle{amsplain}

\end{document}